\documentclass[11pt,reqno]{amsart}

\textheight=22truecm
\textwidth=17truecm
\voffset=-1cm
\hoffset=-2cm

\usepackage{amsmath, amsthm, amssymb}
\usepackage{amsfonts}

\usepackage{hyperref}

\usepackage[ansinew]{inputenc}
\usepackage[dvips]{epsfig}
\usepackage{graphicx}
\usepackage[english]{babel}

\usepackage{thmtools}
\theoremstyle{plain}
\declaretheorem[title=Theorem, parent=section]{theorem}
\declaretheorem[title=Lemma,sibling=theorem]{lemma}
\declaretheorem[title=Proposition,sibling=theorem]{proposition}
\declaretheorem[title=Corollary,sibling=theorem]{corollary}

\theoremstyle{definition}
\declaretheorem[title=Definition,sibling=theorem]{definition}
\declaretheorem[title=Remark,sibling=theorem]{remark}
\declaretheorem[title=Remark, numbered=no]{remark*}

\declaretheorem[title=Assumption, numbered=no]{assumption*}

\numberwithin{equation}{section}

\usepackage[backgroundcolor=white, bordercolor=blue,
linecolor=blue]{todonotes}

\parindent0ex 
\parskip1ex 

\usepackage{dsfont}
\usepackage{bbm}

\newcommand{\R}{\mathbb{R}}

\newcommand{\cE}{\mathcal{E}}

\newcommand{\eps}{\varepsilon}

\newcommand{\1}{\mathbbm{1}}

\DeclareMathOperator{\dist}{dist}

\renewcommand{\d}{\textnormal{\,d}}

\newcommand{\average}{{\mathchoice {\kern1ex\vcenter{\hrule height.4pt
width 6pt depth0pt} \kern-9.7pt} {\kern1ex\vcenter{\hrule
height.4pt width 4.3pt depth0pt} \kern-7pt} {} {} }}
\newcommand{\dashint}{\average\int}

\begin{document}

\allowdisplaybreaks

 \title[Regularity for the Boltzmann equation under pressure and moment bounds]{Regularity for the Boltzmann equation conditional to pressure and moment bounds}

\author{Xavier Fern\'andez-Real}

\author{Xavier Ros-Oton}

\author{Marvin Weidner}

\address{EPFL SB, Station 8, CH-1015 Lausanne, Switzerland}
\email{xavier.fernandez-real@epfl.ch}

\address{ICREA, Pg. Llu\'is Companys 23, 08010 Barcelona, Spain \& Universitat de Barcelona, Departament de Matem\`atiques i Inform\`atica, Gran Via de les Corts Catalanes 585, 08007 Barcelona, Spain \& Centre de Recerca Matem\`atica, Barcelona, Spain}
\email{xros@icrea.cat}

\address{Departament de Matem\`atiques i Inform\`atica, Universitat de Barcelona, Gran Via de les Corts Catalanes 585, 08007 Barcelona, Spain}
\email{mweidner@ub.edu}

\keywords{Boltzmann equation, kinetic, regularity, bounds}

\subjclass[2020]{35Q82, 76P05, 35Q20, 35R09}

\begin{abstract}
We prove that solutions to the Boltzmann equation without cut-off satisfying pointwise bounds on some observables (mass, pressure, and suitable moments) enjoy a uniform bound in~$L^\infty$ in the case of hard potentials. As a consequence, we derive $C^{\infty}$ estimates and decay estimates for all derivatives, conditional to these macroscopic bounds. Our $L^\infty$ estimates are uniform in the limit $s \nearrow 1$ and hence we recover the same results also for the Landau equation.
\end{abstract}

\allowdisplaybreaks

\maketitle
\section{Introduction}

\subsection{The Boltzmann equation}

The Boltzmann equation is one of the fundamental equations of statistical mechanics.
It models the evolution of a gas (or any system made up of a large number of particles), and it was derived by Boltzmann and Maxwell in the 19th century.

The unknown in Boltzmann's equation is a time-dependent probability density $f(t,x,v)$ which keeps track of the ``number'' of particles that at time $t$ and point $x$ have velocity $v$,
\begin{align}\label{eq:Boltzmann}
\partial_t f + v \cdot \nabla_x f = Q(f,f) ~~ \text{ in } (0,\infty) \times \R^n \times \R^n,
\end{align}
where $Q(f,f)$ is the so-called Boltzmann collision operator, and $n \ge 2$.

The Boltzmann collision operator acts only on the velocity variable $v$, and is of the form
\begin{align*}
Q(f,g)(v) = \int_{\R^n \times \mathbb{S}^{n-1}} \big(f(v_{\ast}') g(v') - f(v_{\ast})g(v) \big) B(|v_{\ast} - v|,\cos \theta) \d \sigma \d v_{\ast},
\end{align*}
where $\cos \theta = \frac{v-v_{\ast}}{|v-v_{\ast}|} \cdot \sigma$,  $B$ is the so-called collision kernel, and $v'$ and $ v_{\ast}'$ are the post-collisional velocities given (under elastic collisions) by
\begin{align}
\label{eq:v-v-prime}
v' = \frac{v + v_{\ast}}{2} + \frac{|v-v_{\ast}|}{2} \sigma, \qquad v_{\ast}' = \frac{v + v_{\ast}}{2} - \frac{|v - v_{\ast}|}{2} \sigma.
\end{align} 

The exact form of the collision kernel $B$ depends on the microscopic interaction that we assume between the particles: they interact with each other via a (repulsive) potential $\phi$, most typically with an inverse-power law $\phi(r)=1/r^p$, with $p>1$.
Under these assumptions, we have
\begin{equation}
\label{eq:Bdef}
B(r,\cos \theta) = r^{\gamma} b(\cos \theta), \qquad b(\cos \theta) \asymp |\sin (\theta/2)|^{-(n-1)-2s},
\end{equation}
for some $s \in (0,1)$ and $\gamma>-n$ (see \eqref{s-to-1-kernel} as well).
In the most physically relevant case, $n=3$ and inverse-power law potentials, we actually have $s=\frac1p$ and $\gamma=1-\frac{4}{p}$.
Still, for the sake of generality, the Boltzmann equation is typically studied for general independent parameters $s\in (0,1)$ and $\gamma>-n$.

An important distinction arises often related to the ``strength'' of the repulsive potential $\phi$: when $\gamma>0$ we talk about \emph{hard potentials}, while the case $\gamma\leq0$ is called \emph{soft potentials}.

The limiting case $p\to\infty$ corresponds to hard spheres (in which the collision kernel is not singular anymore, since $s\to0$), while the case $p\to 1$ corresponds to the Coulomb interaction (in which the Boltzmann equation becomes the Landau equation, and $s\to1$).

An important feature of the Boltzmann equation is that it keeps track of macroscopic information (``observables''), but also microscopic variables, which describe the state of the particles at a given time.
All macroscopic observables can be expressed in terms of microscopic averages, i.e., integrals of the form $\int f(t,x,v)\varphi(v) \d v$.
In particular, at any time $t$ and any given point $x$, we have the following observables
\begin{alignat}{2}
 \rho(t,x) &= \int_{\R^n} f(t,x,v) \d v  &&\qquad \textrm{(mass density)} \\
\bar{v}(t,x)&=\frac{1}{\rho} \int_{\R^n} f(t,x,v)v \d v  &&\qquad \textrm{(mean velocity)} \\
\mathbb P(t,x)&= \int_{\R^n} f(t,x,v)\,(v-\bar{v})\otimes (v-\bar{v}) \d v  &&\qquad \textrm{(pressure tensor)} \\ \label{eq:temp}
T(t,x)&= \frac{1}{n\rho} {\rm tr\,} \mathbb P=\frac{1}{n\rho}\int_{\R^n} f(t,x,v)|v-\bar{v}|^2 \d v  &&\qquad \textrm{(temperature)} \\
E(t,x)&= \frac12\rho|\bar{v}|^2+\frac{n}{2}\rho T=\frac{1}{2}\int_{\R^n} f(t,x,v)|v|^2 \d v  &&\qquad \textrm{(energy density)};
\end{alignat}
see, e.g. the survey \cite{Vi02} for more details.

Of course, the equation can also be posed in a bounded domain $\Omega\subset \R^n$ with appropriate boundary conditions (see, e.g., \cite{OuSi24}), however in this paper we focus for simplicity on the case $\Omega=\R^n$.

\subsection{Regularity for the Boltzmann equation}

One of the most important and famous mathematical results for the Boltzmann equation is the {convergence to equilibrium} for smooth solutions, established by Desvilletes and Villani in~\cite{DeVi05}.
The result may be informally summarized as follows:

\vspace{2mm}

\emph{Let $f$ be any solution to the Boltzmann equation, with appropriate decay for large velocities, such that $f$ stays in $C^\infty$ in all variables, uniformly for all $t>0$. 
\\ Then, it converges to equilibrium as $t\to\infty$ faster than any algebraic rate $O(t^{-k})$, $k\in \mathbb N$.}

\vspace{2mm}

This is one of the main two results for which Villani received the Fields Medal in 2010---see \cite[Theorem 2]{DeVi05} for a precise statement.

Their result hence reduces the problem of convergence to equilibrium to the problem of establishing a priori bounds on moments and $C^k$ norms, uniformly in time.
Furthermore, they conjectured that one should be able to establish these bounds, \emph{conditionally to global in time a priori estimates on the hydrodynamic fields $\rho$, $\bar{v}$, and $T$.}

This was essentially the program carried out by Imbert and Silvestre (and Mouhot) in the last years \cite{ImSi22,Sil16,IMS20,ImSi20b,ImSi21} (see also the survey \cite{ImSi20}), who established the uniform $C^\infty$ regularity and decay for (periodic in~$x$) solutions to the Boltzmann equation \eqref{eq:Boltzmann}, under the assumption that the mass density $\rho$ and energy $E$ satisfy
\begin{align}\label{eq:mass}
0<m_0 \le \rho(t,x) &:= \int_{\R^n} f(t,x,v) \d v \le M_0,\\
\label{eq:energy}
E(t,x) &:= \frac{1}{2}\int_{\R^n} f(t,x,v) |v|^2 \d v \le E_0,
\end{align}
and also that the \emph{entropy density} is controlled
\begin{equation} \label{eq:entropy}
h(t,x) := \int_{\R^n} f \log f (t,x,v) \d v \leq H_0 \qquad \textrm{(entropy density)}.
\end{equation}

Their main result, which holds for $\gamma + 2s \in [0,2]$, can be informally summarized as follows:

\vspace{2mm}

\emph{Let $f$ be any solution to the Boltzmann equation satisfying \eqref{eq:mass}-\eqref{eq:energy}-\eqref{eq:entropy} uniformly in $t$, $x$. \\
Then, $f$ stays in $C^\infty$ in all variables (with fast decay as $v\to \infty$), uniformly for all $t>0$. }

\vspace{2mm}

Their results apply to strong solutions to the Boltzmann equation:

\begin{definition}
\label{def:solution}
A function $f : (0,T) \times \R^n \times \R^n \to \R$ is said to be a solution to the Boltzmann equation \eqref{eq:Boltzmann} if $0\leq f \in C^{\infty}((0,T) \times \R^n \times \R^n)$ satisfies \eqref{eq:Boltzmann} in the pointwise sense for all $(t,x,v)\in (0,T)\times \R^n\times \R^n$. 
Moreover, we assume that $f$ is periodic in $x$, that for any $q > 0$ we have
\begin{align*}
\lim_{|v| \to \infty} \frac{f(t,x,v)}{|v|^q} = 0
\end{align*}
locally uniformly in $(t,x)$, and in addition that for every $(t,x)$ it holds $\int_{\R^n} |D^2_v f| (1 + |v|)^{\gamma + 2s} \d v < \infty$.
\end{definition}

We will use the same notion of solution in this paper.

\subsection{Our results}

Notice that the entropy assumption \eqref{eq:entropy} is a higher integrability property for $f$, and thus it is {not} a bound on a macroscopic observable of the form $\int f(t,x,v)\varphi(v) \d v$.
The entropy density is a natural hydrodynamic quantity, but not an observable in the usual sense (linear in $f$).

Notice also that the entropy assumption (together with \eqref{eq:mass}) is significantly stronger than a control from below on the temperature $T$ in \eqref{eq:temp}. 
Indeed, the higher integrability assumption \eqref{eq:entropy} on the entropy density implies in particular that $f$ is absolutely continuous and cannot have too much mass on any set of small measure, while a bound from below on the temperature $T(t,x)$ only says that not all particles at $(t,x)$ have the same velocity, i.e., any $f(t,x,\cdot)$ different from a Dirac's delta has positive temperature.

This means that the assumptions in Imbert--Silvestre \cite{ImSi22} are still stronger than the ones proposed in Desvillettes--Villani \cite{DeVi05}.
This gives rise to the following open problem (explicitly mentioned in~\cite{ImSi22}): 
\[\begin{array}{c} \textit{Does the regularity program of Imbert--Silvestre remain valid if the entropy upper bound \eqref{eq:entropy}} \\ \textit{is replaced by weaker macroscopic bounds?} \end{array}\]

This is the question we study in this paper.

Our main results allow us to replace the upper bound on the entropy by a lower bound on the pressure
\[
\mathbb P(t,x) \geq p_0 {\rm Id_n}>0,
\]
or, equivalently,
\begin{align}
\label{eq:temperature}
\inf_{e \in \mathbb{S}^{n-1}} |e\cdot \mathbb P e| = \inf_{e \in \mathbb{S}^{n-1}} \int_{\R^n} f(t,x,v) |(v - \bar{v}) \cdot e|^2 \d v \ge p_0>0.
\end{align}
Notice that this condition allows for very singular distributions $f$ at any given $(t,x)$, and the only requirement is that we have ``\emph{positive temperature in all directions $e$}''.
In other words, the condition is only violated at $(t,x)$ when $f$ is concentrated on a hyperplane.

Actually, for our main results, it suffices to assume the weaker condition that at least two different eigenvalues of $(\mathbb P_{ij})_{ij}$ are positive, namely, that 
\begin{equation}
\label{eq:temperatureB}
\inf_{\sigma \in \mathbb{S}^{n-1}} \sup_{\substack{e\perp \sigma \\ e \in \mathbb{S}^{n-1}}}\int_{\R^n} f(t,x,v) |(v - \bar{v}) \cdot e|^2 \d v \ge p_0>0.
\end{equation}
This is equivalent to saying that we have ``positive temperature in at least two different directions'', i.e., that $f(t,x,\cdot)$ is not concentrated on a line.

In addition to this, we also need to assume that the $q$-th moment is finite for some $q >2$, i.e.,
\begin{align}
\label{eq:moment}
\int_{\R^n} f(t,x,v) |v|^q \d v \le M_q.
\end{align}

Note that the bounds on mass \eqref{eq:mass}, energy \eqref{eq:energy}, and entropy \eqref{eq:entropy},  imply \eqref{eq:moment} for \emph{all} $q > 2$; see  \cite[Theorem 1.3(ii)]{IMS20}.

Notice that both conditions \eqref{eq:temperatureB} and \eqref{eq:moment} are given in terms of macroscopic observables of the form $\int f(t,x,v)\varphi(v) \d v$.

Moreover, as explained below, we will show that replacing the lower bound on the pressure $\mathbb P(t,x)$ (equivalent to a  lower bound on ``directional temperatures'') by a lower bound on the temperature $T(t,x)$ would require completely new ideas.
Our hypotheses are, in some sense, the minimal ones under which the diffusion in Boltzmann's equation is still $n$-dimensional.

Our main result applies to the case of hard potentials $\gamma>0$, and reads as follows: 

\begin{theorem}
\label{cor:smoothness}
Let $s\in(0,1)$, $\gamma > 0$, $q > n$, and $\gamma + 2s\leq q$. Let $f$ be a solution to the Boltzmann equation in $(0,T) \times \R^n \times \R^n$ with $n \ge 2$ (see \autoref{def:solution}). Assume that $f$ satisfies \eqref{eq:mass},  \eqref{eq:temperatureB}, and \eqref{eq:moment} with $q > n$. 

Then, for any multi-index $k \in \mathbb{N}^{1+2n}$, and any $\tau > 0$ and $p \geq 0$, it holds
\begin{align*}
\big\|  |v|^p D^k f \big\|_{L^{\infty}([\tau,T] \times \R^n \times \R^n)} \le C_{k,p}, 
\end{align*}
where $C_{k,p}$ depends only on $n,s,\gamma,m_0,M_0, p_0,M_q,q,p,\tau,k$.
\end{theorem}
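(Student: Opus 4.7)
The plan is to follow the conditional regularity program of Imbert--Silvestre(--Mouhot) \cite{ImSi22, IMS20, ImSi20b, ImSi21, Sil16}, whose two crucial inputs for the $C^\infty$ bootstrap are a pointwise upper bound on $f$ and a Gaussian pointwise lower bound on $f$. Under the entropy hypothesis \eqref{eq:entropy} used in \cite{ImSi22} these two pointwise bounds are known; my task is thus to derive them from the weaker macroscopic assumptions \eqref{eq:mass}, \eqref{eq:temperatureB} and \eqref{eq:moment}, and then feed them into the Imbert--Silvestre machinery essentially unchanged. Since $\gamma>0$, standard moment creation and propagation for hard potentials will provide the weighted $|v|^p$ factor for free, once the pointwise bounds and the interior Hölder estimate are in place.

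\textbf{Main step: conditional $L^\infty$ bound.} The key novelty is to establish $\|f\|_{L^\infty_{t,x,v}}\leq C$ under the assumptions of the theorem. In Silvestre's carré-du-champ form $Q(f,f)=\mathcal L_f f + c_f\, f$, the pressure hypothesis \eqref{eq:temperatureB} is precisely the condition that prevents the nonlocal kernel of $\mathcal L_f$ from degenerating: since $f(t,x,\cdot)$ has positive variance in at least two transverse directions, one checks, via a spectral analysis of the covariance $\mathbb P(t,x)$, that the symbol of $\mathcal L_f$ is uniformly elliptic of order $2s$ in a fixed cone at every $(t,x)$. Combined with the upper bound on the mass density from \eqref{eq:mass} and with \eqref{eq:moment} (which controls the lower-order term $c_f$ together with the tail of the nonlocal form), this coercivity allows to run a De Giorgi--type iteration on $(f-K)_+$ in the spirit of \cite{Sil16}; the assumption $q>n$ is what ensures that the gain exponent in the iteration is strictly positive and that the tail contribution can be absorbed.

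\textbf{Remaining steps.} Once $f\in L^\infty$ is available, the mass lower bound in \eqref{eq:mass} together with this new $L^\infty$ ceiling immediately forces $|\{f\geq \delta\}|\geq \eta>0$ in any ball of radius growing with the moments; applying the spreading lemma of \cite{IMS20} in the transverse directions provided by \eqref{eq:temperatureB} and iterating through hard-potential ($\gamma>0$) collisions then yields a global Gaussian lower bound $f(t,x,v)\geq c\, e^{-C|v|^2}$ after a short waiting time, which replaces the role of the entropy bound in \cite{ImSi22}. With both pointwise bounds in hand, the kernel of $\mathcal L_f$ satisfies the nondegeneracy and cancellation hypotheses of the Imbert--Silvestre kinetic integro-differential Hölder estimate, and the Schauder iteration of \cite{ImSi21} yields $C^\infty$ regularity in all variables. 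Finally, moment creation for hard potentials combined with weighted versions of the same Schauder estimates upgrades this to the claimed bound $\| |v|^p D^k f\|_{L^\infty}\leq C_{k,p}$ for every $k$ and $p$.

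\textbf{Main obstacle.} The entire content of the theorem boils down to the $L^\infty$ step: without \eqref{eq:entropy} the distribution $f(t,x,\cdot)$ need not be absolutely continuous as a measure in $v$, so one cannot a priori invoke higher integrability of $f$ to close a nonlinear iteration. The interest of the statement is that the directional pressure bound \eqref{eq:temperatureB} is the sharp macroscopic condition that restores the ellipticity of $\mathcal L_f$ needed to run De Giorgi, while the moment threshold $q>n$ is used in an essential way to control the nonlocal tail and the drift term $c_f$ during the iteration.
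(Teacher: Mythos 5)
You correctly identify that the whole theorem hinges on the conditional $L^\infty$ bound, and that the pressure condition \eqref{eq:temperatureB} and the moment threshold $q>n$ are the levers. However, the route you propose for the $L^\infty$ step would fail, and the reason it fails is precisely the obstruction this paper was written to overcome.

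Your argument asserts that \eqref{eq:temperatureB}, together with a ``spectral analysis of the covariance $\mathbb P$,'' forces the kernel of $\mathcal L_f$ to be uniformly elliptic of order $2s$ \emph{in a fixed cone} at every $(t,x)$, and then runs the De Giorgi iteration of Silvestre \cite{Sil16} on $(f-K)_+$. This is not correct: the cone lower bound \eqref{cone} (i.e. $K_f(v,v+h)\geq \lambda |h|^{-n-2s}\1_{\mathcal C_v}(h)$) is derived in \cite{ImSi22} from the \emph{entropy} upper bound \eqref{eq:entropy}, because the entropy prevents $f(t,x,\cdot)$ from concentrating on null sets; under the pressure and moment bounds alone, $f(t,x,\cdot)$ may well be a singular measure concentrated on a set of Lebesgue measure zero (as long as it is not concentrated on a line), and then \eqref{cone} can fail outright. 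Consequently the $L^\infty$ proofs of \cite{Sil16,IMS20}, which rely on that cone ellipticity, do not go through here. What \emph{does} survive under \eqref{eq:mass}, \eqref{eq:temperatureB}, \eqref{eq:moment} is a strictly weaker set of ellipticity conditions — a directional nondegeneracy $\inf_e \int_{B_r} K(v,v+h)(h\cdot e)_+^2\,dh \gtrsim r^{2-2s}$ and a coercivity estimate for the associated quadratic form (Proposition \ref{prop:properties_kinetic_kernels}, proved via Proposition \ref{prop:subtract-tube} and the Gressman--Strain coercivity \cite{GrSt11}) — and these do \emph{not} imply the cone condition. The paper's actual mechanism is therefore different in kind: it invokes the $C^\alpha$-to-$L^\infty$ estimate of \cite{ImSi20b}, which holds under exactly these weaker conditions, to get $\|f\|_{C^\alpha}\lesssim \|f\|_{L^\infty}$ with a constant independent of $\|f\|_{L^\infty}$; it then closes the loop with a kinetic interpolation inequality $\|f\|_{L^\infty}\leq \varepsilon^\alpha\|f\|_{C^\alpha}+C\varepsilon^{-n}\|f\|_{L^1}$ (Lemma \ref{lemma:interpolation-kinetic}), together with the iteration lemma \ref{lemma:Giusti} and the change of variables to handle the $v$-degeneracy, to absorb the Hölder norm and conclude $\|f\|_{L^\infty}\lesssim M_q$. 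The constraint $q>n$ enters here through the weight bookkeeping in the interpolation (Theorem \ref{thm:Holder-estimate}), not, as you suggest, through control of the drift $c_f$ in a De Giorgi iteration.

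A second, more minor, deviation: once $f\in L^\infty$ is known, you do not need to re-derive a Gaussian lower bound from scratch. The paper simply notes that the $L^\infty$ bound together with \eqref{eq:mass} immediately yields the entropy bound \eqref{eq:entropy}, after which \emph{all} of the Imbert--Silvestre machinery (lower bounds, Schauder, decay for $\gamma>0$) applies verbatim; that is why $\gamma>0$ is required in Theorem \ref{cor:smoothness} — the decay estimates of \cite{IMS20} need it — rather than for any moment-creation argument you build yourself.
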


Notice that, in order to prove this result, the key point is to establish the case $k=0$, $p=0$, that is, an $L^\infty$ bound for $f$.
Indeed, once this case is established then the entropy bound \eqref{eq:entropy} automatically holds, and we can apply the results of Imbert--Silvestre  \cite{ImSi22}.

$L^\infty$ bounds were established in Silvestre \cite{Sil16} and Imbert--Mouhot--Silvestre \cite{IMS20} under the entropy assumption \eqref{eq:entropy}, together with \eqref{eq:mass} and \eqref{eq:energy}.
However, the proofs in \cite{Sil16} and \cite{IMS20} do \emph{not} work when one replaces the entropy bound by the pressure and moment bounds in these papers.

Our main contribution is to establish such $L^\infty$ bounds with a completely different method, allowing us to replace the entropy assumption by a lower bound on the pressure and some moment bounds.

\begin{theorem} \label{thm:entropy-finite}
Let $s\in(0,1)$, $\gamma \ge 0$, $q>n$, and $\gamma + 2s \leq q$. Let $f$ be a solution to the Boltzmann equation in $(0,T) \times \R^n \times \R^n$ with $n \ge 2$ (see \autoref{def:solution}). Assume that $f$ satisfies \eqref{eq:mass}, \eqref{eq:temperatureB}, and \eqref{eq:moment} with $q > n$. 

Then, for any $\tau > 0$ we have 
\[\big\| f\big\|_{L^{\infty}([\tau,T] \times \R^n \times \R^n)} \leq C,\]
 $C$ depending only on $n,s,m_0,M_0,p_0,M_q,q$, and $\tau$.

In particular, the entropy bound \eqref{eq:entropy} holds for some $H_0$ depending only on $n,s,m_0,M_0,p_0,M_q,q$, and $\tau$.
\end{theorem}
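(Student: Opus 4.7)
The plan is to prove the $L^{\infty}$ estimate directly; the entropy bound then follows at once from $\|f\|_\infty \le C$, the mass bound, and the moment bound, via $f\log f \le f\log\|f\|_\infty$ on $\{f\ge 1\}$ and $|f\log f|\le C f^{1/2}$ on $\{f<1\}$, together with $\int f^{1/2}\d v<\infty$, which holds by Cauchy--Schwarz since $q>n$.

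For the $L^{\infty}$ bound itself, I would adapt the De Giorgi iteration framework of Imbert--Mouhot--Silvestre \cite{IMS20}, replacing their coercivity input (which in their proof uses the entropy bound through the Imbert--Silvestre coercivity lemma) by a new coercivity estimate based only on the pressure assumption \eqref{eq:temperatureB} and the moment bound \eqref{eq:moment}. Using the Carleman representation, the collision operator is written as
\[
Q(f,f)(v)=\int_{\R^n}K_f(v,v')\bigl(f(v')-f(v)\bigr)\d v'+c_f(v)\,f(v),
\]
where $K_f$ is an integro-differential kernel of order $2s$ in the relative velocity, given up to explicit factors by integrals of $f$ over affine hyperplanes, and $c_f$ is a convolution-type lower-order term whose size is controlled by \eqref{eq:moment}.

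The central new step is to prove a coercivity estimate for $K_f$: for each $v_0$ in a suitable velocity range there exists a set $\mathcal{C}_{v_0}\subset\mathbb{S}^{n-1}$ of spherical measure bounded below such that
\[
K_f(v_0,v_0+re)\ge c\, r^{-n-2s}\qquad \text{for all } e\in\mathcal{C}_{v_0},\ r\in(0,r_0),
\]
with constants depending only on $m_0,M_0,p_0,M_q$. The key geometric input is that \eqref{eq:temperatureB} forbids $f(t,x,\cdot)$ from being concentrated on a line, and $K_f(v_0,v_0+re)$ is (up to explicit factors) an integral of $f$ on an affine hyperplane orthogonal to $e$. A quantitative version of this fact, combined with the mass bound from below and the moment bound, should yield the coercivity for $e$ in a set of directions of positive measure. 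Moreover, the construction can be carried out with constants uniform as $s\nearrow 1$, which is what opens the door to the Landau case.

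Once coercivity is established, I would insert it into a De Giorgi iteration on super-level sets $\{f>\kappa\}$, using kinetic cylinder rescaling to balance the transport term against the fractional diffusion; the condition $q>n$ enters at the iteration stage through a Sobolev-type embedding that closes the induction. The main obstacle is clearly the coercivity lemma itself: unlike the entropy-based setting, the hypotheses here admit distributions $f(t,x,\cdot)$ that are extremely singular in velocity (for instance, a weighted sum of Dirac masses on a $2$-plane), so one cannot rule out concentration on any particular hyperplane. The argument must therefore rest on a geometric dichotomy—either a regular part of $f$ carries definite mass on the relevant hyperplanes, or the singular part is forced by \eqref{eq:temperatureB} to spread over a $2$-dimensional set, in which case coercivity is recovered in directions transverse to it. The moment bound $q>n$ is what quantifies the "regular part versus tail" decomposition in a way that makes this dichotomy effective, and this is the point at which the weakening from entropy to pressure genuinely requires new ideas.
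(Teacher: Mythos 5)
Your central lemma is where the argument breaks, and the paper explicitly acknowledges this failure. You propose a pointwise cone-type lower bound
\[
K_f(v_0,v_0+re)\ge c\,r^{-n-2s}\qquad\text{for all }e\in\mathcal C_{v_0},\ r\in(0,r_0),
\]
for a set $\mathcal C_{v_0}\subset\mathbb S^{n-1}$ of positive spherical measure. But by \eqref{eq:K-bounds}, $K_f(v,v+h)$ is (up to explicit factors) an integral of $f$ over the affine hyperplane through $v$ orthogonal to $h$. The hypotheses \eqref{eq:mass}--\eqref{eq:temperatureB}--\eqref{eq:moment} allow $f(t,x,\cdot)$ to concentrate arbitrarily close to a finite sum of point masses lying on a $2$-plane $\Pi$: such an $f$ has bounded mass and moments and, since $\Pi$ is genuinely two-dimensional, satisfies the pressure lower bound \eqref{eq:temperatureB}. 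Yet for a.e.\ $v_0$ and a.e.\ direction $e$ the hyperplane $v_0+e^\perp$ intersects $\Pi$ in a line that misses all the mass, so $K_f(v_0,v_0+re)\to 0$ as $f$ approaches the concentrated state; no uniform $c$ can survive. Your ``geometric dichotomy'' does not repair this: a singular part spread over a plane is exactly the counterexample, because hyperplane slices of a plane in directions transverse to it are one-dimensional and still miss atomic mass. The paper says precisely this right after \eqref{cone}: under pressure bounds ``all the mass of $f$ could be concentrated on a set of zero measure, and \eqref{cone} could fail.'' The paper also states, separately, that even granting the weaker conditions (i)--(iv), the De~Giorgi iteration of \cite{IMS20} ``does not work'' without the entropy bound, so adapting that iteration is not the right move either.

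What the paper actually does avoids both obstacles. First, it never proves a pointwise bound on $K_f$. It proves the integrated nondegeneracy (ii), $\inf_e\int_{B_r}K_f(v,v+h)(h\cdot e)_+^2\d h\ge\lambda r^{2-2s}$, and the quadratic-form coercivity (iii), which are strictly weaker than a cone bound and survive concentration onto a plane. The quantitative geometric input is \autoref{prop:subtract-tube}: the pressure and $q$-moment bounds force a fixed fraction of mass to lie in a bounded ball but outside any fixed linear tube. This is fed into the computation for (ii) (\autoref{thm:change-of-variables-nondegeneracy}) and into the Gressman--Strain anisotropic estimate for (iii) (\autoref{thm:change-of-variables-coercivity}). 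Second, the $L^\infty$ bound is obtained not by a De~Giorgi iteration but by treating the \cite{ImSi20b} $C^\alpha$--$L^\infty$ estimate as a black box, applied after the Imbert--Silvestre change of variables $\mathcal T_0$ to get a global \emph{weighted} estimate $\Vert f\Vert_{C^{\alpha}_{\ell,p-\alpha}}\le C\Vert f\Vert_{C^0_{\ell,p}}$ (\autoref{lemma:Holder-estimate}), then combined with a kinetic interpolation $\Vert f\Vert_{C^0_{\ell,p}}\le\eps^{\alpha}\Vert f\Vert_{C^{\alpha}_{\ell,p-\alpha}}+C\eps^{-n}\Vert f\Vert_{L^\infty_{t,x}L^1_{\ell,p+n}}$ (\autoref{lemma:interpolation-kinetic}), and closed by the absorption iteration \autoref{lemma:Giusti}. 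The hypothesis $q>n$ is used here to make $\Vert f\Vert_{L^\infty_{t,x}L^1_{\ell,p+n}}\le M_q$ finite with $p=q-n$, not to close a Sobolev embedding inside a De~Giorgi loop. Your entropy-from-$L^\infty$ deduction at the end is fine, but the $L^\infty$ bound itself needs the interpolation--absorption mechanism rather than a cone lemma.
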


Notice also that the $L^\infty$ bound for positive times in \autoref{thm:entropy-finite} holds for $\gamma=0$ as well (Maxwellian molecules).
However, in order to deduce \autoref{cor:smoothness} we need to use the results in \cite{IMS20}, where in case $\gamma=0$ the decay for large velocities is inherited from the initial condition (while for $\gamma>0$ it is an inherent regularization for all positives times, independent of the initial condition).
This is the only reason why in \autoref{cor:smoothness} we need to assume $\gamma>0$.

Let us also mention the recent results in \cite{ImLo25}, where decay estimates for large velocities, independent of the initial condition, are proved for the Boltzmann equation with moderately soft potentials $\gamma+2s\geq0$, for any  $\gamma \in (-n,1]$. Notice, however, that the results in \cite{ImLo25} are for solutions in bounded domains, while our results are for $x$-periodic solutions. An extension of their results to $x$-periodic solutions would allow us to include also the case $\gamma = 0$ in \autoref{cor:smoothness}.

\subsection{Strategy of the proof}

The Boltzmann collision operator can be written via Carleman coordinates as follows
\begin{equation}
\label{eq:Qfg}
Q(f,g) = \mathcal{L}_{K_f} g + g(f \ast c_b |\cdot|^{\gamma}),
\end{equation}
where $c_b > 0$ is a constant, depending only on the Boltzmann collision kernel $B$, and $\mathcal{L}_{K_f}$ is an integro-differential operator of the form
\begin{align*}
\mathcal{L}_{K_f} g(v) = \int_{\R^n} (g(v+h) - g(v)) K_f(v,v + h) \d h.
\end{align*}

The kernel $K_f : \R^n \times \R^n \to [0,\infty]$ depends on the function $f$ as follows:
\begin{align}
\label{eq:Kf-def}
K_f(v,v') = \frac{2^{n-1}}{|v - v'|} \int_{w \perp v' - v} f(v + w) B(r , \cos \theta) r^{-n+2} \d w
\end{align}
with 
\begin{align}
\label{eq:cos-r-def}
r^2 = |v - v'|^2 + |w|^2, \qquad \cos \theta = \frac{w - (v - v')}{|w - (v - v')|} \cdot \frac{w - (v' - v)}{|w - (v' - v)|},
\end{align}
and satisfies the following pointwise upper and lower bound (see \cite[Corollary 4.2]{Sil16})
\begin{align}
\label{eq:K-bounds}
K_f(v,v+h) \asymp |h|^{-n-2s} \left(\int_{w \bot h} f(v+w) |w|^{\gamma + 2s + 1} \d w \right),
\end{align}
where the constants hidden behind the symbol $\asymp$ only depend on $B$, and will be neglected in the sequel.

Thus, the Boltzmann equation can be written as a nonlinear kinetic integro-differential equation, where the kernel $K_f$ depends on the solution $f$ itself.

\subsubsection{Ellipticity conditions}  A key observation in the program of Imbert--Silvestre is that, if we have a priori bounds on the mass, energy, and entropy densities \eqref{eq:mass}-\eqref{eq:energy}-\eqref{eq:entropy}, then the kernel $K_f$ is uniformly elliptic in the following sense:
\begin{equation}\label{cone}
K_f(v,v+h) \geq \frac{\lambda}{|h|^{n+2s}}\,\1_{\mathcal{C}_v}(h)\quad \textrm{for some cone $\mathcal{C}_v$,}
\end{equation}
where $\lambda>0$ and the cone $\mathcal{C}_v$ depend only on $m_0,M_0,E_0,H_0$, and $v$.

The existence of these cones $\mathcal{C}_v$ comes from the fact that we have a uniform bound on the entropy density.
Unfortunately, if we only assume a lower bound on the pressure \eqref{eq:temperature}, then all the mass of $f$ could be concentrated on a set of zero measure, and \eqref{cone} could fail.

Still, we prove that under our macroscopic assumptions (bounds on mass, pressure, and moments) we have the following weaker ellipticity conditions for $K_f$.

\begin{proposition}
\label{prop:properties_kinetic_kernels}
Let $s\in (0, 1)$, and let $f$ be nonnegative and satisfying \eqref{eq:mass}, \eqref{eq:temperatureB}, and \eqref{eq:moment} for some $ q > 2$. Then, the Boltzmann kernel $K = \tilde K_f$ given by \eqref{eq:Ktildedef} with $v_0\in \R^n$ and $\gamma+2s\in [0, q]$ satisfies:
 \begin{itemize}
\item[(i)] (Upper bound) For any $r > 0$ and any $v \in B_2$:
\begin{align*}
\int_{\R^n \setminus B_r} K(v,v+h) \d h + \int_{\R^n \setminus B_r} K(v+h,v) \d h \le \Lambda r^{-2s}.
\end{align*}
\item[(ii)]  (Nondegeneracy) For any $r > 0$ and $v \in B_2$
\begin{align}
\label{eq:nondeg-intro} \inf_{e \in \mathbb{S}^{n-1}} \int_{B_r} K(v,v+h) (h \cdot e)_+^2 \d h \ge \lambda r^{2-2s}> 0.
\end{align}
\item[(iii)]  (Coercivity) For any $g$ supported in $B_2$:
\begin{align}
 \label{eq:coercivity-intro} \int_{B_2} \int_{\R^n} (g(v') - g(v))^2 K(v,v') \d v \d v' \ge \lambda [g]_{H^s(\R^n)}^2 - \Lambda \Vert g \Vert_{L^2(\R^n)}^2,
\end{align}
\item[(iv)] (Cancellation condition) For any $r \in (0,1)$ and $v \in B_2$:
\begin{align*}
\left| \int_{B_r} (K(v,v+h) - K(v+h,v)) \d h \right| &\le \Lambda r^{-2s}, \\
\left| \int_{B_r} (K(v,v+h) - K(v+h,v)) h \d h  \right| &\le \Lambda (1 + r^{1-2s}) ~~ \text{ if } s \ge \frac{1}{2},
\end{align*}
\end{itemize}
uniformly in $v_0$, for some constants $\lambda$ and $\Lambda$ depending only on $n$, $s$, $\gamma$, $m_0$, $M_0$, $p_0$, $M_q$, and $q$. 
\end{proposition}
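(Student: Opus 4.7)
The plan is to work directly from the pointwise asymptotic \eqref{eq:K-bounds}, rewriting each of the four quantities as an integral against~$f$ and then translating the hypotheses \eqref{eq:mass}, \eqref{eq:temperatureB}, \eqref{eq:moment} into the desired kernel bounds. Throughout we argue for $K_f$; the rescaling/translation built into $\tilde K_f$ preserves the form of the pointwise bound and keeps the control of~$f$ uniform in $v_0$.

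For the upper bound~(i), pass to polar coordinates $h=\rho\eta$ and use the co-area identity
\[
\int_{\mathbb S^{n-1}}\!\!d\eta\int_{w\perp\eta} G(w)\,d\mathcal{H}^{n-1}(w)=c_n\int_{\R^n} G(w)|w|^{-1}\,dw.
\]
The radial factor $\rho^{-1-2s}$ integrates over $(r,\infty)$ to $r^{-2s}/(2s)$, and applying the identity with $G(w)=f(v+w)|w|^{\gamma+2s+1}$ reduces the estimate to $\int f(u)|u-v|^{\gamma+2s}\,du$, which is finite uniformly in $v\in B_2$ by $M_0$ and $M_q$ whenever $\gamma+2s\le q$. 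The integral $\int_{\R^n\setminus B_r} K(v+h,v)\,dh$ is treated analogously after a change of variables in~$w$. For the cancellation~(iv), the same representation yields
\[
K_f(v,v+h)-K_f(v+h,v)=|h|^{-n-2s}\!\!\int_{w\perp h}\!\!\bigl(f(v+w)-f(v+h+w)\bigr)|w|^{\gamma+2s+1}\,dw,
\]
whose $h$-integrals over $B_r$ can be estimated from this pointwise formula together with the same moment bounds, mirroring \cite[Section~4]{Sil16}.

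Part~(ii) is the heart of the proof. The polar reduction, together with the elementary identity $\int_{\mathbb S^{n-1}\cap\xi^\perp}(\eta\cdot e)_+^2\,d\sigma(\eta)=c'_n(1-(e\cdot\xi)^2)$ and the swap of $\eta$ and $\xi$ in the co-area formula, gives
\[
\int_{B_r}\!K(v,v+h)(h\cdot e)_+^2\,dh\;\gtrsim\; r^{2-2s}\int_{\R^n}\! f(u)\,|u-v|^{\gamma+2s-2}\,|P_{e^\perp}(u-v)|^2\,du,
\]
since $(1-(e\cdot\hat w)^2)|w|^2=|P_{e^\perp}(w)|^2$. To bound the $u$-integral below, apply \eqref{eq:temperatureB} with $\sigma=e$: this produces $e'\in e^\perp$ with $\int f(u)|(u-\bar v)\cdot e'|^2\,du\ge p_0$, and hence $\int f(u)|P_{e^\perp}(u-\bar v)|^2\,du\ge p_0$. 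Recentering at~$v$ is harmless: since $\int f(u)P_{e^\perp}(u-\bar v)\,du=0$, expanding the square yields $\int f(u)|P_{e^\perp}(u-v)|^2\,du = \int f(u)|P_{e^\perp}(u-\bar v)|^2\,du+\rho\,|P_{e^\perp}(\bar v-v)|^2\ge p_0$. To restore the weight $|u-v|^{\gamma+2s-2}$, truncate to a shell $r_0\le|u-v|\le R$: the part with $|u-v|<r_0$ is bounded by $r_0^2 M_0$, and the tail $|u-v|>R$ by $R^{2-q}$ times a fixed moment of~$f$ controlled by $M_0$ and $M_q$. Choosing $r_0$ small and $R$ large absorbs both errors into $p_0/2$, and on the shell one has $|u-v|^{\gamma+2s-2}\ge c(r_0,R)>0$, concluding (ii) uniformly in $v\in B_2$ and $e\in\mathbb S^{n-1}$.

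Finally, the coercivity~(iii) follows from (i), (ii) and (iv) by the standard ellipticity-to-coercivity machinery for symmetric integro-differential operators: the directional second-moment bound from (ii) yields a symbol lower bound for the symmetric form $\iint(g(v')-g(v))^2K(v,v')\,dv'dv$ that translates into the $H^s$-seminorm estimate modulo an $L^2$ remainder, along the lines of \cite{Sil16,ImSi22}. The main obstacle is clearly (ii): under the weak pressure bound \eqref{eq:temperatureB} the kernel $K_f$ need not enjoy the cone-ellipticity \eqref{cone} available with an entropy bound, so one must work with the pointwise average nondegeneracy throughout and make sure the resulting lower bound is independent of both $v\in B_2$ and the direction $e$.
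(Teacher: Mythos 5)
Your treatment of (i) and (iv) is correct and matches the paper, which cites \cite{ImSi22} for these. Your polar reduction and recentering argument for (ii) is a valid and somewhat simpler alternative to the paper's proof of the untransformed nondegeneracy (the paper instead proves a ``mass outside any tube'' lemma, Proposition~\ref{prop:subtract-tube}, and uses it in \autoref{thm:Boltzmann-kernel-nondegenerate}); your shell-truncation with fixed $r_0,R$ works as long as $v$ stays bounded, which is exactly the $|v_0|\le2$ case. However, there are two genuine gaps.

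First, the change of variables for $|v_0|\ge2$ is dismissed in one sentence, but it is the core of the proposition and occupies most of Section~\ref{sec:nondeg}. The transformation $\tau_0$ is \emph{anisotropic}: it compresses by $1/|v_0|$ in the $v_0$-direction, so the preimage of $B_r$ is an ellipsoid with aspect ratio $1:|v_0|$, and the $h$-integral in (ii) becomes an integral over a thin slab whose angular structure interacts nontrivially with the Carleman kernel. Concretely, in Step~1 of the paper's proof the transformed quantity factors as
\[
c\,r^{2-2s}|v_0|^{-\gamma-2s}\bigl[1+(|v_0|^2-1)\cos^2(v_0,e)\bigr]\int_{B_R(\bar v)} f(\tilde w)\sin^2\bigl((\tilde w-\tilde v),\tau_0^{-1}(e)\bigr)|\tilde w-\tilde v|^{\gamma+2s}\d\tilde w,
\]
where the bracketed factor arises from $|\tau_0^{-1}(e)|^2$ and can range from $1$ to $\asymp|v_0|^2$ depending on $e$, while the $\sin^2$ factor can be as small as $\asymp|v_0|^{-2}$. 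Showing that these exactly compensate (the dichotomy $\cos^2(v_0,e)\ge c_0$ versus $\cos^2(v_0,e)<c_0$ in Step~3, leading to \eqref{eq:trafo-nondegeneracy-help-0}) requires the localized mass lower bound from Proposition~\ref{prop:subtract-tube}; your recentered second-moment inequality does not localize the mass and therefore does not plug into this argument. The claim that the rescaling ``keeps the control of $f$ uniform in $v_0$'' is precisely what must be proved, not assumed.

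Second, your claim that (iii) follows from (i),(ii),(iv) by ``standard ellipticity-to-coercivity machinery'' is not justified. Condition (ii) is an averaged second-moment nondegeneracy, considerably weaker than cone ellipticity, and deducing the coercivity of the Dirichlet form from it is not known to be automatic. The paper instead invokes the Gressman--Strain anisotropic coercivity estimate \cite{GrSt11} (see \autoref{prop:GrSt}), which relies on the very same tube-subtraction hypothesis \eqref{eq:GrSt-ass}; this is a genuinely different and independent ingredient, and the authors explicitly flag it as such in the introduction. Your proposal needs both the $|v_0|\ge2$ analysis for (ii) and a correct source for (iii) before it establishes the stated proposition.
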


The nondegeneracy condition (ii) is the minimal hypothesis to ensure that the diffusion given by $K_f$ is really $n$-dimensional; see \cite[Proposition~2.2.1]{FeRo24}.

The upper bounds (i) and (iv) are rather simple to prove, since they do not rely on any entropy or pressure lower bound, and have already been established in \cite{ImSi22}. 
In contrast, the verification of the nondegeneracy (ii) and coercivity (iii) are more delicate, and were established in \cite{ImSi20b,ImSi22} under the assumption \eqref{cone} (see also \cite{ChSi20}).

A key contribution of the current paper consists in the verification of the two conditions (ii) and (iii) under pressure and moment bounds. 
We will establish these properties in \autoref{thm:change-of-variables-nondegeneracy} and \autoref{thm:change-of-variables-coercivity} respectively.
To verify the condition (iii), we rely on the results of Gressmann--Strain \cite{GrSt11}.

It is important to notice that the conditions (ii) and (iii) can \emph{fail} if we only assume the energy bound \eqref{eq:energy} instead of \eqref{eq:moment} for some $q>2$; see \autoref{rmk:qmoments}.
This is a first reason why we need to assume higher order moments. 

\subsubsection{From ellipticity to regularity} \autoref{prop:properties_kinetic_kernels} is a crucial ingredient for our proof, as it tells us that under our macroscopic assumptions, the diffusion coming from $K_f$ is $n$-dimensional, and thus there is hope to establish some regularity results.

In the program of Imbert--Silvestre, some of the main steps of the proof are the following:
\begin{itemize}
\item Prove an $L^\infty$ bound for solutions, subject to the macroscopic bounds on mass, energy, entropy. This was done by Imbert--Mouhot--Silvestre \cite{Sil16,IMS20}.

\item Establish a $C^\alpha-L^\infty$ estimate, and deduce that solutions are $C^\alpha$ \cite{ImSi20b,ImSi22}.

\item Establish a higher order Schauder estimate, and deduce that solutions are $C^\infty$ \cite{ImSi21,ImSi22}.
\end{itemize}

The entropy bound \eqref{eq:entropy} is crucially needed for the $L^\infty$ bound.
Indeed, the proof of \cite{IMS20} does not work if we only assume (i)-(ii)-(iii)-(iv) above, and thus we need a completely different proof under these weaker assumptions.
Notice also that this is the only missing step, because once we have an $L^\infty$ bound for solutions then the entropy is automatically bounded and we can apply the existing results.

Our proof of the $L^\infty$ bound (\autoref{thm:entropy-finite}) relies on the  $C^{\alpha}-L^{\infty}$ estimate from \cite{ImSi20b}, which holds exactly under the assumptions (i)-(ii)-(iii)-(iv).
Namely, the idea is that, under our macroscopic bounds \eqref{eq:mass}, \eqref{eq:temperature} (or \eqref{eq:temperatureB}) and \eqref{eq:moment}, any solution will satisfy a bound of the type
\[\|f\|_{C^\alpha} \leq C \|f\|_{L^\infty},\]
for some $C$ that does not depend on $\|f\|_{L^\infty}$ (in particular we do not need the bound on the entropy here).
If this was true globally (which is not the case), by an interpolation argument (in kinetic spaces) we would show 
\[\|f\|_{C^\alpha} \lesssim \|f\|_{L^\infty} \leq C_\delta \|f\|_{L^1} +\delta \|f\|_{C^\alpha},\]
and then we could reabsorb the term on the RHS to deduce $\|f\|_{C^\alpha} \lesssim \|f\|_{L^1}$, and in particular
\[\|f\|_{L^\infty} \lesssim \|f\|_{L^1},\]
which is the estimate we want.
This type of argument works well for harmonic functions (or elliptic equations), but it is much more delicate here because of the kinetic scaling, the degeneracy of the kernel $K_f$ as $v\to \infty$, and the exponent $\gamma$ in the equation.
Despite all this, we manage to make the argument work provided that we have finite moments of some order $q>n$.

\subsubsection{Related results}
Let us close this subsection by emphasizing that our technique to prove the $C^{\alpha}-L^1$ estimate (resp. \autoref{thm:entropy-finite}) would also work for linear kinetic equations of the form
\begin{align}
\label{eq:linear-intro}
\partial_t f + v \cdot \nabla_x f = \mathcal{L}_K f + h,
\end{align}
where $\mathcal{L}_K$ is a nonlocal operator with kernel $K$ satisfying (i)-(ii)-(iii)-(iv) from \autoref{prop:properties_kinetic_kernels}. Our proof heavily relies on the H\"older estimate which was developed in \cite{ImSi20b} for \textit{linear} nonlocal kinetic equations with bounded measurable coefficients and then applied to the \textit{nonlinear} Boltzmann equation.
Recently, a great deal of attention has been paid to the study of regularity properties for linear nonlocal kinetic equations like \eqref{eq:linear-intro}. A closely related question to the De Giorgi--Nash--Moser type results from \cite{ImSi20b} is the validity of a Harnack inequality for nonlocal kinetic equations. Interestingly, it turns out that the Harnack inequality fails to hold for solutions to \eqref{eq:linear-intro}, already in case $\mathcal{L}_K = (-\Delta_v)^s$ is the fractional Laplacian (see \cite{KaWe24}). Let us refer to \cite{Sto19,Loh23,Loh24b,APP24} for further results on pointwise regularity estimates for solutions to \eqref{eq:linear-intro}.
Moreover, let us mention \cite{ChZh18, NiZa21, NiZa22, Nie22} for results on nonlocal kinetic $L^p$ maximal regularity and  \cite{ImSi21, HWZ20, Loh24} where Schauder-type regularity estimates have been established.

\subsection{Convergence to equilibrium}

Exactly as in \cite{ImSi22}, an immediate consequence of our \autoref{cor:smoothness} is the following improvement of the main result in \cite{DeVi05}:

\begin{corollary}
\label{cor:convergence}
Let $s\in(0,1)$, $\gamma > 0$, $q > n$, and $\gamma + 2s\leq q$. Let $f$ be a solution to the Boltzmann equation in $(0,\infty) \times \R^n \times \R^n$ with $n \ge 2$ (see \autoref{def:solution}). 
Assume that $f$ satisfies globally \eqref{eq:mass},  \eqref{eq:temperatureB}, and \eqref{eq:moment} with $q > n$. 

Then, $f$ converges to a Maxwellian as $t\to\infty$ as described in \cite[Theorem 2]{DeVi05}.
\end{corollary}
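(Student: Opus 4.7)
The plan is to reduce the corollary to a direct application of \cite[Theorem 2]{DeVi05} by checking that our hypotheses imply, globally in time, the a priori regularity and decay required by Desvillettes--Villani. That theorem assumes that the solution $f$ stays smooth in all variables, uniformly in $t>0$, with fast decay as $|v|\to\infty$ (together with the conservation laws and the usual entropy dissipation structure, which hold for any smooth solution of \eqref{eq:Boltzmann}). Our job is therefore to upgrade the macroscopic hypotheses \eqref{eq:mass}, \eqref{eq:temperatureB}, \eqref{eq:moment} (assumed to hold on $(0,\infty)\times \R^n$) into global-in-time uniform $C^k$ and weighted decay estimates.

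First, I would fix any $\tau>0$ and apply \autoref{cor:smoothness} on each time interval $[\tau, T]$ with $T$ arbitrary. Because the constants $C_{k,p}$ produced there depend only on $n,s,\gamma,m_0,M_0,p_0,M_q,q,p,\tau,k$, and \emph{not} on $T$, the estimate
\[
\bigl\| |v|^p D^k f \bigr\|_{L^\infty([\tau,\infty)\times \R^n\times \R^n)} \le C_{k,p}
\]
holds globally for every multi-index $k$ and every $p\ge 0$. This gives exactly the type of uniform $C^\infty$ control with arbitrary polynomial decay in $v$ that Desvillettes--Villani require as their standing a priori assumption. The mass, momentum, energy, and entropy are then automatically well-defined, the conservation laws hold, and the entropy production identity is meaningful.

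Next, following \cite{ImSi22} verbatim, I would invoke \cite[Theorem 2]{DeVi05} on $[1,\infty)$ (say, after the short waiting time $\tau=1$): the hypotheses of that theorem are the uniform smoothness and decay estimates we just established, and its conclusion is precisely that $f(t,\cdot,\cdot)$ converges as $t\to\infty$ to a Maxwellian, faster than any algebraic rate $O(t^{-k})$.

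The main issue to check is merely a matching of hypotheses: one must verify that our notion of solution (periodic in $x$, with the decay and integrability built into \autoref{def:solution}) together with the uniform bounds provided by \autoref{cor:smoothness} indeed fulfills the quantitative smoothness/decay assumptions of \cite[Theorem 2]{DeVi05}. Since all these were already argued in \cite{ImSi22} under the stronger entropy hypothesis, and \autoref{cor:smoothness} delivers exactly the same a priori bounds under our weaker pressure-and-moment hypotheses, no further work is needed and the corollary follows.
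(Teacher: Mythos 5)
Your argument is exactly the paper's: the corollary is treated as an immediate consequence of \autoref{cor:smoothness}, which furnishes the uniform-in-time $C^\infty$ and weighted decay estimates that \cite[Theorem 2]{DeVi05} takes as hypotheses, and the invocation of Desvillettes--Villani then proceeds verbatim as in \cite{ImSi22}. No substantive difference from the paper's (one-line) proof.
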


In other words, if the macroscopic observables in \eqref{eq:mass},  \eqref{eq:temperatureB}, and \eqref{eq:moment} remain controlled, then $f$ will converge to equilibrium as $t\to\infty$ faster than any algebraic rate $O(t^{-k})$.

\subsection{The grazing collision limit}

The Boltzmann equation converges formally to the Landau equation as $s\to1$ (see, e.g., \cite{Vi02}), provided that the collision kernel has the appropriate normalizing factor
\begin{equation} \label{s-to-1-kernel}
b(\cos \theta) \asymp (1-s)|\sin (\theta/2)|^{-(n-1)-2s}.
\end{equation}

An open problem after the results of \cite{ImSi22} is to establish regularity estimates (like those in \autoref{cor:smoothness}) that remain uniform as $s\to 1$.
As explained in \cite{ImSi22}, the main difficulty lies in the $L^\infty$ estimates and decay for large velocities from \cite{IMS20}.
The proof in \cite{IMS20} heavily uses the nonlocality of the equation, and thus their estimates cannot be made uniform as $s\to 1$.

Another advantage of the method we introduce in this paper is that the new $L^\infty$ estimate we establish here (\autoref{thm:entropy-finite}) can be made uniform in the grazing collision limit $s\to 1$.

\begin{theorem} \label{thm:grazing}
Let $s_0 \in (0,1)$, $s\in [s_0,1)$, $\gamma \ge 0$, $q>n$, and $\gamma + 2s \leq q$. 
Let $f$ be a solution to the Boltzmann equation as in \autoref{def:solution}, with the normalization factor \eqref{s-to-1-kernel}. Assume that $f$ satisfies \eqref{eq:mass}, \eqref{eq:temperatureB}, and \eqref{eq:moment}. Then, for any $\tau > 0$ we have 
\[\big\| f\big\|_{L^{\infty}([\tau,T] \times \R^n \times \R^n)} \leq C_0,\]
  with $C_0$ depending only on $n,s_0,m_0,M_0,p_0,M_q,q,$ and $\tau$.

Moreover, if $f$ satisfies \eqref{eq:moment} for all $q > n$ and if $\gamma \ge 0$, then, for any $\tau > 0$ and $p \ge 0$ we have 
\[\big\||v|^p f\big\|_{L^{\infty}([\tau,T] \times \R^n \times \R^n)} \leq C_p \]
with $C_p$ depending only on $n,s_0,\gamma,m_0,M_0,p_0,p,\tau$, and on $M_{p+n+1}$.
\end{theorem}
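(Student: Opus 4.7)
The plan is to revisit the proof of \autoref{thm:entropy-finite}, keeping careful track of the dependence on $s$ of every constant. With the normalization \eqref{s-to-1-kernel}, the whole argument produces constants that remain bounded as $s \nearrow 1$, and this yields the first assertion. The weighted decay is then obtained by localizing around high-velocity points and applying the uniform $L^\infty$ bound on a unit-scale problem.

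\textbf{Step 1 (uniform ellipticity).} First I would upgrade \autoref{prop:properties_kinetic_kernels} to a statement with ellipticity constants $\lambda, \Lambda$ depending on $s$ only through $s_0$. The factor $(1-s)$ in \eqref{s-to-1-kernel} is introduced exactly so that $(1-s)\int_{\mathbb{S}^{n-1}}(1-\cos\theta)\,b(\cos\theta)\,d\sigma$, and analogous weighted angular moments, stay uniformly bounded above and below. The upper bound (i) and cancellation conditions (iv), originally due to \cite{ImSi22}, depend on $s$ only through such angular integrals, so they hold with $s$-uniform $\Lambda$. For the nondegeneracy (ii) and coercivity (iii) proved in \autoref{thm:change-of-variables-nondegeneracy} and \autoref{thm:change-of-variables-coercivity}, a careful inspection shows that the only place where $s$ enters the estimates is again via these weighted angular averages and the pressure/moment constants $m_0, M_0, p_0, M_q$; hence $\lambda, \Lambda$ can be taken uniform in $s \in [s_0, 1)$.

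\textbf{Step 2 (uniform $L^\infty$ bound).} With Step~1 in hand, I would rerun the proof of \autoref{thm:entropy-finite}. Its key input is the $C^\alpha$-$L^\infty$ estimate of \cite{ImSi20b} for linear kinetic integro-differential equations satisfying (i)-(iv); its proof is a De~Giorgi-type argument whose constants depend only on $n, s_0, \lambda$, and $\Lambda$. The interpolation-plus-reabsorption scheme described in the Introduction, yielding $\|f\|_{L^\infty} \lesssim \|f\|_{L^1}$, then goes through with $s$-uniform constants; combined with the uniform mass bound \eqref{eq:mass}, this produces the desired $L^\infty$ estimate depending only on $n, s_0, m_0, M_0, p_0, M_q, q, \tau$.

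\textbf{Step 3 (weighted decay).} For the second statement, fix $(t_0, x_0, v_0)$ with $|v_0| \geq 2$. By the moment assumption with $q = p + n + 1$,
\begin{align*}
\int_{B_{|v_0|/2}(v_0)} f(t_0,x_0,v)\,dv \lesssim |v_0|^{-p-n-1} M_{p+n+1}.
\end{align*}
I would rescale the problem by a kinetic zoom around $(t_0, x_0, v_0)$ at velocity scale $R \sim |v_0|$ so that, on the new unit ball, the rescaled solution $\tilde f$ has $L^1$ mass at most $C|v_0|^{-p}$ and its collision kernel still satisfies (i)-(iv) uniformly in $s$ and in $|v_0|$. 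Here the crucial point is that, by the representation \eqref{eq:K-bounds}, the nondegeneracy of $K_f$ is a statement about hyperplane integrals of $f$ which are controlled by the \emph{global} macroscopic bounds on $f$ (mass, pressure, moments), and therefore survive the velocity translation. Applying the uniform $L^\infty$-from-$L^1$ estimate of Step~2 to the rescaled problem then gives $f(t_0, x_0, v_0) \lesssim |v_0|^{-p}$, which is the desired decay.

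\textbf{Main obstacle.} The technically most delicate point is Step~3, namely propagating the ellipticity of \autoref{prop:properties_kinetic_kernels} through the velocity zoom-in uniformly in both $s$ and $|v_0|$. The pressure hypothesis \eqref{eq:temperatureB} is phrased in terms of the barycenter $\bar v(t,x)$ and does not directly give a lower pressure bound for the translated distribution centered at $v_0$. The way out is to reformulate the nondegeneracy (ii) directly in terms of the mass of $f$ on hyperplanes, via \eqref{eq:K-bounds}, and show that this hyperplane-mass bound follows from the original mass, pressure, and high-moment assumptions on $f$; combined with the fact that the angular part of the kernel, after normalization \eqref{s-to-1-kernel}, behaves uniformly in the grazing limit, this closes the argument.
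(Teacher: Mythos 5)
Steps 1 and 2 of your proposal are essentially the paper's route: the $(1-s)$-normalization \eqref{s-to-1-kernel} makes the ellipticity conditions (i)--(iv) of \autoref{prop:properties_kinetic_kernels} uniform in $s\in[s_0,1)$ (this is \autoref{lem:properties_kinetic_kernels}), and the De Giorgi--Nash--Moser estimate of \cite{ImSi20b} together with the kinetic interpolation (\autoref{lemma:interpolation-kinetic}) and the iteration lemma (\autoref{lemma:Giusti}) then closes the bootstrap. Two remarks on precision, though: (a) the reabsorption does \emph{not} run as the naive $\|f\|_{L^\infty}\lesssim\|f\|_{L^1}$ that you describe; it must be done in the weighted scales $C^\alpha_{\ell,p}$ and $L^\infty_{t,x}L^1_{\ell,p+n}$, so what gets absorbed is the weighted $C^\alpha$-norm against the moment $M_{p+n}$, not the plain mass $M_0$; and (b) after choosing $\eps$ small one gets a singular factor $(\tau_2-\tau_1)^{-(n+\alpha)/2s}$, so you need the iteration lemma to conclude, which your write-up omits.

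Your Step 3 takes a genuinely different route from the paper, and as written it has a real gap. In the paper the decay is not obtained by a separate high-velocity localization: the weighted H\"older estimate \autoref{thm:Holder-estimate} is proved for all $v_0$ at once (using the anisotropic change of variables $\mathcal{T}_0$, \autoref{lemma:Holder-estimate}, and the interpolation against $M_q$), and the decay falls out by choosing $q=n+p+1$. Your proposal instead zooms isotropically in velocity at scale $R\sim|v_0|$ and then invokes an "$L^\infty$-from-$L^1$" estimate on the unit ball. This cannot work with the ellipticity constants uniform in $|v_0|$: the nondegeneracy constant of $K_f$ behaves like $\lambda(v)\asymp (1+|v|)^{\gamma+2s-2}$ (see \autoref{thm:Boltzmann-kernel-nondegenerate}), while the upper ellipticity constant behaves like $(1+|v|)^{\gamma+2s}$, and an isotropic dilation by $R$ rescales both by the same factor $R^{2s}$. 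The ellipticity ratio $\Lambda/\lambda$ therefore grows like $|v_0|^2$ under your zoom, so the constant in the $C^\alpha$ (or $L^\infty$-from-$L^1$) estimate for the rescaled problem is not uniform in $|v_0|$. The paper's $\tau_0$ is specifically \emph{not} an isotropic zoom: it compresses only the $v_0$-direction by $1/|v_0|$ and rescales time and space by $|v_0|^{-\gamma-2s}$, which is exactly what balances the anisotropic degeneracy. Your suggested "way out" (reformulating nondegeneracy in terms of hyperplane mass) is indeed part of the paper's argument for \autoref{thm:change-of-variables-nondegeneracy}, but it does not by itself repair the anisotropy problem created by an isotropic zoom. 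You also assert that the rescaled $L^1$ mass is $\lesssim|v_0|^{-p}$, but a direct computation with your zoom gives a different exponent; the correct bookkeeping is precisely what \autoref{lemma:interpolation-kinetic} encodes by choosing the comparison radius $r=\eps(1+|v|)^{-1}$ and pairing against $M_{p+n}$.
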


Note that once we have the $L^\infty$ estimate from \autoref{thm:entropy-finite}, we still use the results in \cite{IMS20} to deduce fast decay for large velocities, and hence the estimate in \autoref{cor:smoothness} is still not uniform as $s\to 1$.
The use of \cite{IMS20} can be avoided entirely, for example, when assuming that we have finite moments of \emph{all} orders $q>1$ (instead of \emph{some} $q>n$). In that case, \autoref{thm:grazing} becomes a robust analog of the main result in \cite{IMS20}. Then, the only missing ingredient to obtain a uniform version of \autoref{cor:smoothness} is to prove robust Schauder estimates for nonlocal kinetic equations (see \cite{ImSi21}).

\subsection{The Landau equation} 
Quite interestingly, \autoref{thm:grazing} seems to be new even for the Landau equation, which corresponds to the limit $s=1$, and is given by
\begin{align}
\label{eq:Landau} \partial_t f + v \cdot \nabla_x f = \nabla_v \cdot [A \nabla_v f] + b \cdot \nabla_v f + c f,
\end{align}
where
\begin{align}
\label{eq:a_Landau} A(t,x,v) &= a_{n,\gamma} \int_{\R^n} \left(I - \frac{w}{|w|} \otimes \frac{w}{|w|} \right) |w|^{\gamma + 2} f(t,x,v - w) \d w,\\
\label{eq:b_Landau} b(t,x,v) &= b_{n,\gamma} \int_{\R^n} w |w|^{\gamma} f(t,x,v-w) \d w,\\
\label{eq:c_Landau} c(t,x,v) &= c_{n,\gamma} \int_{\R^n} |w|^{\gamma} f(t,x,v-w) \d w.
\end{align}
Here, $a_{n,\gamma},b_{n,\gamma},c_{n,\gamma}$ are constants with $a_{n, \gamma} > 0$ (which we will neglect in the sequel), and $\gamma > -n$. 

\autoref{thm:grazing} was known for the Landau equation under the entropy bound assumption~\eqref{eq:entropy} in the cases $\gamma \le 0$ (see \cite{HeSn20}) and $\gamma \in (0,1]$ (see \cite{Sne20}), and for space-homogeneous solutions (see \cite{DeVi00}).

In particular, we deduce the following smoothness result for the Landau equation with hard potentials.

\begin{corollary}
\label{cor:Landau}
Let $q > n$. Let $f$ be a weak solution to the Landau equation with $\gamma \ge 0$ and $\gamma + 2 \le q$. Assume that $f$ satisfies \eqref{eq:mass}, \eqref{eq:temperatureB}, and \eqref{eq:moment}. Then, for any $\tau > 0$ we have 
\[\big\| f\big\|_{L^{\infty}([\tau,T] \times \R^n \times \R^n)} \leq C_0,\]
  with $C_0$ depending only on $n,m_0,M_0,p_0,M_q,q,$ and $\tau$.

Moreover, if $f$ satisfies \eqref{eq:moment} for all $q>n$ and if $\gamma \ge 0$, then, for any multi-index $k \in \mathbb{N}^{1+2n}$, and any $\tau > 0$ and $p \geq 0$ it holds
\begin{align*}
\big\| |v|^p D^k f \big\|_{L^{\infty}([\tau,T] \times \R^n \times \R^n)} \le C_{k, p}, 
\end{align*}
where $C_{k, p}$ depends only on $n,\gamma,m_0,M_0, p_0,p, \tau,k,$ and on all $M_q$ for $q > 0$.
\end{corollary}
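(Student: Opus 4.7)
The plan is to transfer the proof of \autoref{thm:grazing} to the local setting, with the Boltzmann kernel $K_f$ replaced by the Landau diffusion matrix $A$. The key structural input, analogous to \autoref{prop:properties_kinetic_kernels}, is a uniform ellipticity estimate for $A$ in \eqref{eq:a_Landau}: under the assumptions \eqref{eq:mass}, \eqref{eq:temperatureB}, and \eqref{eq:moment}, one shows that for every $R > 0$ there exist $\lambda_R, \Lambda_R > 0$, depending only on $n,\gamma,m_0,M_0,p_0,M_q,q,R$, such that $\lambda_R I \leq A(t,x,v) \leq \Lambda_R I$ for all $(t,x,v) \in [0,T]\times \R^n\times B_R$. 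The upper bound is straightforward from \eqref{eq:moment} (since $\gamma + 2 \leq q$). For the lower bound, one uses the identity
\[
A(v)\xi\cdot\xi \;=\; a_{n,\gamma}\int_{\R^n}|w|^{\gamma+2}\, f(v+w)\,\bigl|\xi - (\xi\cdot \tfrac{w}{|w|})\tfrac{w}{|w|}\bigr|^2\, \d w,
\]
so the vanishing of $A(v)\xi\cdot\xi$ forces $f(v+\cdot)$ to be concentrated on the line through $\xi$; but \eqref{eq:temperatureB} precisely rules out concentration on a line, and a quantitative version of the argument from \autoref{thm:change-of-variables-nondegeneracy} (which simplifies in the local $s=1$ limit) yields the uniform lower bound $\lambda_R$.

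Once the ellipticity of $A$ is established, the Landau equation \eqref{eq:Landau} is a linear kinetic Fokker--Planck equation with bounded measurable coefficients (the lower order terms $b$ and $c$ are controlled by \eqref{eq:mass} and \eqref{eq:moment}), for which De Giorgi--Nash--Moser type $C^\alpha - L^\infty$ estimates are available (Golse--Imbert--Mouhot--Vasseur, Henderson--Snelson--Tarfulea). The $L^\infty$ bound then follows by reproducing the interpolation/reabsorption argument used for \autoref{thm:entropy-finite}: one upgrades the $C^\alpha - L^\infty$ estimate to a $C^\alpha - L^1$ estimate using the moment bound with $q > n$ to handle the non-compact $v$-variable and the $(1+|v|)^\gamma$ degeneracy, and then combines it with the mass bound \eqref{eq:mass} to close the estimate. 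This proves the first claim.

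For the second claim, once the $L^\infty$ bound is available, the entropy bound \eqref{eq:entropy} holds automatically (via \eqref{eq:mass}). We may then invoke the existing regularity program for the Landau equation with hard potentials: Henderson--Snelson \cite{HeSn20} and Snelson \cite{Sne20} provide $C^\infty$ bounds with arbitrary polynomial decay on all derivatives, conditional on \eqref{eq:mass}, \eqref{eq:energy}, \eqref{eq:entropy}, and finiteness of all moments, which, combined with the first claim, yields the second. An alternative route for the $L^\infty$ bound is to appeal directly to \autoref{thm:grazing} and pass to the limit $s \nearrow 1$ along a sequence of Boltzmann approximations with kernels \eqref{s-to-1-kernel}; this works because the constant $C_0$ in \autoref{thm:grazing} depends only on $s_0$, but it requires a compactness/stability argument to identify the limit with the prescribed Landau solution.

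The main obstacle is the ellipticity of $A$ under the minimal nondegeneracy \eqref{eq:temperatureB}, which allows the pressure tensor to have rank as low as~$2$. The key observation is that the Landau integrand projects orthogonally to $w/|w|$, so a rank-$2$ spread of $f$ in velocity is already enough to make $A(v)$ elliptic in every direction $\xi$: any $\xi\in \mathbb{S}^{n-1}$ fails to be parallel to some direction $e$ in which $f(t,x,\cdot)$ has positive second moment, and this suffices to bound $A(v)\xi\cdot\xi$ from below. This is the local counterpart of the change-of-variables argument in the Boltzmann case, and it explains why \eqref{eq:temperatureB} is the natural minimal assumption on both sides of the grazing collision limit.
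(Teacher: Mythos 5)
Your high-level strategy (ellipticity + local H\"older + interpolation/reabsorption + Henderson--Snelson Schauder for derivatives) is the same as the paper's, and your observation that \eqref{eq:temperatureB} rules out concentration on a line and hence gives a pointwise lower bound for $A(v)\xi\cdot\xi$ is exactly the paper's \autoref{lemma:Landau-ellipticity} (which runs through \autoref{prop:subtract-tube} and the argument of \autoref{thm:change-of-variables-nondegeneracy}). The appeal to \cite{GIMV19} for the De Giorgi--Nash--Moser step and to \cite{HeSn20} for the higher-order iteration also matches.

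However, there is a genuine gap in the ellipticity step as you formulate it. You state ellipticity of the \emph{original} coefficient $A$ on balls $B_R$, with constants $\lambda_R, \Lambda_R$ depending on $R$. For $\gamma\geq 0$ one has, roughly, $\lambda(v) \asymp (1+|v|)^\gamma$ but $\Lambda(v)\asymp (1+|v|)^{\gamma+2}$ in directions transverse to $v$, so the condition number $\Lambda_R/\lambda_R$ grows like $R^2$. The H\"older exponent and constant produced by \cite{GIMV19} degenerate with this condition number; consequently, without correcting for the anisotropy, the local H\"older estimate on $Q_{r}(z_0)$ has a constant and exponent that deteriorate as $|v_0|\to\infty$, and the global weighted estimate $\|f\|_{C^\alpha_{\ell,p-\alpha}}\lesssim \|f\|_{C^0_{\ell,p}}$ that drives the interpolation/reabsorption argument (the Landau analog of \autoref{lemma:Holder-estimate} and \autoref{thm:Holder-estimate}) does not close. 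What the paper actually proves, and what you need, is uniform ellipticity of the \emph{transformed} coefficient $\tilde{A}(v) e = |v_0|^{-\gamma-2}\tau_0^{-1}(A(\tilde{v})\tau_0^{-1}e)$ in the fixed ball $B_2$, with $\lambda,\Lambda$ independent of $v_0$ (\autoref{lemma:Landau-ellipticity} and \autoref{lemma:Landau-upper-ellipticity}); the change of variables $\tau_0$ is not an afterthought, it is the device that makes the constants uniform. You allude to the change of variables only at the end (``this is the local counterpart of the change-of-variables argument''), but your stated key lemma does not incorporate it, and as stated it is too weak to feed the interpolation argument. Also, a small additional point: the lower-order coefficient $\tilde c$ must be shown to satisfy $|\tilde c|\lesssim (1+|v_0|)^{-2}$ so that the $\tilde c\tilde f$ source term in the H\"older estimate is compatible with the $(1+|v_0|)^{-p}$ weight; this is not covered by a mere ``$b,c$ are controlled by mass and moments'' and is the content of \autoref{lemma:Landau-upper-ellipticity}. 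Your alternative route (sending $s\nearrow 1$ in \autoref{thm:grazing} along a Boltzmann approximating family) is plausible in spirit but is not the paper's route and would require a nontrivial compactness/identification argument that you correctly flag but do not supply.
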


Note that, in fact, for given $k,p$, there exists $q_0$ so that $C_{k,p}$ only depends on $M_{q_0}$.

\subsection{Acknowledgments}
X. F. was supported by the Swiss National Science Foundation (SNF grant  PZ00P2\_208930), by the Swiss State Secretariat for Education, Research and Innovation (SERI) under contract number MB22.00034, and by the AEI project PID2021-125021NA-I00 (Spain).
 
X. R. and M. W. were supported by the European Research Council under the
Grant Agreement No. 101123223 (SSNSD), and by the AEI project PID2021-125021NA-I00 (Spain). Furthermore, X. R. was supported by the grant RED2022-134784-T funded by  AEI/10.13039/501100011033, by AGAUR Grant 2021 SGR 00087 (Catalunya), and by the Spanish State Research Agency through the Mar\'ia de Maeztu Program for Centers and Units of
Excellence in R{\&}D (CEX2020-001084-M).

\subsection{Outline}

This article is structured as follows. In Section \ref{sec:prelim} we introduce (recall) the change of variables and kinetic H\"older spaces. In Section~\ref{sec:aux}, we present several consequences of the pressure and moment bounds. In Section \ref{sec:nondeg} and Section \ref{sec:coercive} we prove that the Boltzmann kernels are nondegenerate and coercive in the sense of (ii) and (iii) above, respectively. In Section \ref{sec:main} we give the proofs of our main results for the Boltzmann equation. Finally, in Section \ref{sec:Landau} we explain how to adapt our technique to the Landau equation.

\section{Preliminaries}
\label{sec:prelim}

In this section we introduce the change of variables from \cite{ImSi22} that preserves the geometry of the Boltzmann equation and is crucial in order to deduce global H\"older estimates, as well as some definitions on kinetic H\"older spaces.

We start with the definition of the (kinetic) cylinder adapted to $(t, x, v)$ variables and the kernel's singularity (in fact, the singularity of its angular part): 

Given a point $z_0 = (t_0, x_0, v_0)\in \R^{1+2n}$, we denote by $Q_r(z_0)$ the kinetic cylinder of radius $r$ and centered at $z_0$,
\begin{equation}
\label{eq:Qr}
Q_r(z_0) := \{(t, x, v)\in \R^{1+2n} : t_0 - r^{2s} < t \le t_0, |x -x_0 - (t - t_0) v_0|< r^{1+2s}, |v  - v_0| < r\}. 
\end{equation}
We will denote by $Q_r = Q_r(0, 0, 0)$. 

\subsection{Change of variables}

Note that by verifying nondegeneracy \eqref{eq:nondeg-intro} and coercivity \eqref{eq:coercivity-intro} for $K_f$ we can obtain a H\"older estimate in $B_1$ by application of the main result in \cite{ImSi20b} (see also \autoref{prop:Holder-estimate-kinetic}). In order to obtain a global H\"older regularity estimate in $B_1(v_0)$ for some $v_0 \in \R^n$, we need to verify \eqref{eq:nondeg-intro} for any $v \in B_2(v_0)$ and \eqref{eq:coercivity-intro} for functions $g$ supported in $B_2(v_0)$. It turns out that the verification of these translated versions of \eqref{eq:nondeg-intro} and \eqref{eq:coercivity-intro} is not for free  since the ellipticity constants degenerate (or explode) as $|v| \to \infty$. Clearly, in order to obtain global regularity estimates for solutions to the Boltzmann equation, it is crucial to have uniform ellipticity for all velocities.

In \cite{ImSi22}, this problem is solved by introducing a suitable change of variables that preserves the geometry of the Boltzmann equation and under which the ellipticity constants remain controlled:

Let $\gamma + 2s \ge 0$. Given $t_0 \in \R$, $x_0 \in \R^n$, and $v_0 \in \R^n$, we consider 
\begin{align*}
\tilde{f}(t,x,v) = f(\tilde{t},\tilde{x},\tilde{v}),
\end{align*}
where
\begin{align*}
(\tilde{t},\tilde{x},\tilde{v}) = \mathcal{T}_0(t,x,v) = \left\{\begin{array}{ll}
\left( t_0 + \frac{t}{|v_0|^{\gamma + 2s}} , x_0 + \frac{\tau_0 (x) + t v_0}{|v_0|^{\gamma + 2s}} , v_0 + \tau_0 (v) \right),& \quad\text{if}\quad |v_0|\ge 2,\\[0.3cm]
\left(t_0+t, x_0+x+tv_0, v_0+v\right),& \quad\text{if}\quad |v_0|< 2,
\end{array}
\right.
\end{align*}
and $\tau_0 : \R^n \to \R^n$ is defined as the following transformation
\begin{align*}
\tau_0(a v_0 + w) = \left\{\begin{array}{ll} \frac{a}{|v_0|} v_0 + w ~~ \forall w \perp v_0, ~~ a \in \R,& \quad\text{if}\quad |v_0|\ge 2,\\[0.3cm]
a v_0 + w,& \quad\text{if}\quad |v_0|<2.
\end{array}
\right.
\end{align*} 

Let us also introduce the following sets, where we recall that $Q_r$ is the  kinetic cylinder in $\R^{1+2n}$ (given by \eqref{eq:Qr}) and $B_r$ is the usual ball in $\R^n$:
\begin{align*}
\cE_r(z_0) = \mathcal{T}_0(Q_r), \qquad E_r(v_0) = v_0 + \tau_0(B_r).
\end{align*}
Clearly, when $|v_0| < 2$, it holds $Q_r(z_0) = \mathcal{E}_r(z_0)$.
Moreover, note that when $f$ solves the Boltzmann equation in $\cE_1(z_0)$, then $\tilde{f}$ solves
\begin{align*}
\partial_t \tilde{f} + v \cdot \nabla_x \tilde{f} = \mathcal{L}_{\tilde{K}_f} \tilde{f} + \tilde{g} ~~ \text{ in } Q_1,
\end{align*}
where
\begin{align}
\label{eq:Ktildedef} \tilde{K}_f(t,x,v,v+h) &= 
\left\{\begin{array}{ll}
|v_0|^{-1-\gamma-2s}K_f(\tilde{t},\tilde{x},\tilde{v},v_0 + \tau_0(v+h)),& \quad\text{if}\quad |v_0|\ge 2,\\[0.3cm]
 K_f(\tilde{t},\tilde{x},\tilde{v},v_0 + v+h),& \quad\text{if}\quad |v_0|< 2,
\end{array}
\right.
\\[0.3cm]
\tilde{g}(t,x,v) &= \left\{\begin{array}{ll}
c_b |v_0|^{-\gamma-2s}f(\tilde{t},\tilde{x},\tilde{v})(f \ast |\cdot|^{\gamma})(\tilde{t},\tilde{x},\tilde{v}),& \quad\text{if}\quad |v_0|\ge 2,\\[0.3cm]
c_b f(\tilde{t},\tilde{x},\tilde{v})(f \ast |\cdot|^{\gamma})(\tilde{t},\tilde{x},\tilde{v}),& \quad\text{if}\quad |v_0|< 2.
\end{array}
\right.
\end{align}
Note that $\tilde{K}_f$ is still homogeneous and satisfies the non-divergence form symmetry condition.

In order to obtain a global H\"older estimate for solutions to the Boltzmann equation, we need to verify the nondegeneracy \eqref{eq:nondeg-intro} and coercivity \eqref{eq:coercivity-intro} with $K = \tilde{K}_f$ for any $v_0 \in \R^n$.

\subsection{Kinetic H\"older spaces}

We  also recall the notion of kinetic distance and the corresponding H\"older spaces: 
\begin{definition}
\label{def:kin_dist}
Given two points $z_i = (t_i,x_i,v_i) \in \R^{1+2n}$, $i = 1,2$, we define the kinetic distance
\begin{align*}
d_\ell(z_1,z_2) = \min_{w \in \R^n} \left\{ \max \left( |t_1 -t_2|^{\frac{1}{2s}} , |x_1 - x_2  - (t_1-t_2)w|^{\frac{1}{1+2s}} , |v_1 - w|, |v_2 - w| \right) \right\}.
\end{align*}
Given a set $D \subset \R^{1+2n}$ and $\alpha \in [0,1)$, we say that a function $f : D \to \R$ is $\alpha$-H\"older continuous at $z_0 \in \R^{1+2n}$ if 
\begin{align*}
|f(z) - f(z_0)| \le C_{z_0} d_\ell(z,z_0)^{\alpha}\quad\text{for all}\quad z\in D. 
\end{align*}
We define the set $C_\ell^{\alpha}(D)$ as the set of all functions $f : D \to \R$ that are $\alpha$-H\"older continuous at any $z_0 \in D$ with a constant $C_{z_0}$ that is independent of $z_0$, and we define $[f]_{C^{\alpha}_\ell(D)}$ as the supremum over all $C_{z_0}$, $z_0 \in D$. Moreover, we set
\begin{align*}
\Vert f \Vert_{C^{\alpha}_\ell(D)} = \Vert f \Vert_{L^{\infty}(D)} + [f]_{C^{\alpha}_\ell(D)}, \qquad [f]_{C^0_\ell(D)} = \Vert f \Vert_{L^{\infty}(D)}.
\end{align*}
Moreover, given $p > 0$, we define 
\begin{align*}
\Vert f \Vert_{C^{\alpha}_{\ell,p}((\tau,T) \times \R^n \times \R^n)} &= \sup \left\{ (1 + |v|)^p \Vert f \Vert_{C_\ell^{\alpha}(Q_r(z))} : r \in (0,1], ~ Q_r(z) \subset (\tau,T) \times \R^n \times \R^n \right\},\\
\Vert f \Vert_{L^{\infty}_{t,x}L^1_{\ell,p}((\tau,T) \times \R^n \times \R^n)} &= \sup \left\{ \Vert f \Vert_{L^{\infty}_{t,x}L^1_v(Q_r(z) ; (1 + |v|)^p \d v)} : r \in (0,1], ~ Q_r(z) \subset (\tau,T) \times \R^n \times \R^n \right\}.
\end{align*}
We say that $f \in C^{\alpha}_{\ell,\text{fast}}$ if for any $p > 0$ and all $r \in (0,1]$ there is $C_p > 0$ such that for all $Q_r(z) \subset (\tau,T) \times \R^n \times \R^n)$ it holds $\Vert f \Vert_{C^{\alpha}_{\ell,p}(Q_r(z))} \le C_p$.
\end{definition}

\section{Auxiliary lemmas}
\label{sec:aux}

In this section, we present an important consequence of the pressure and moment bounds (for $q > 2$), which will turn out to be useful in the proofs of the nondegeneracy \eqref{eq:nondeg-intro} and coercivity \eqref{eq:coercivity-intro} of the Boltzmann kernels.

It is well-known that by \eqref{eq:mass} and \eqref{eq:energy}, solutions to the Boltzmann equation have some positive mass which can be located in a ball around the center of mass $\bar{v}$. Moreover, due to the pressure lower bound and the moment bound for some $q > 2$, the location of this mass can be further specified. In particular, the following result, which is the main result of this section, states that solutions have some positive mass located outside any linear tube of radius $\delta$.

\begin{proposition}
\label{prop:subtract-tube}
Assume that $f$ is nonnegative and satisfies \eqref{eq:mass}, \eqref{eq:temperatureB}, and \eqref{eq:moment} for some $q > 2$. There exist $R \ge 2$,  and $\delta, c > 0$, depending only on $m_0,M_0,p_0, M_q$, and $q$, such that for any line $L\subset \R^n$, denoting $L_\delta := \{x : {\rm dist}(x, L) < \delta\}$ the linear tube of radius $\delta$ around $L$, we have 
\begin{align*}
\int_{B_R  \setminus L_{\delta}} f(w) \d w \ge c.
\end{align*}
\end{proposition}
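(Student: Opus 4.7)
My plan is to argue by contradiction: fix a line $L$ and suppose the conclusion fails, i.e.\ $\int_{B_R\setminus L_\delta} f<c$ with $R$ large and $c,\delta>0$ small (to be chosen at the end). The overall strategy is to show this forces the pressure of $f$ in every direction perpendicular to $\sigma := $ direction of $L$ to be arbitrarily small, contradicting \eqref{eq:temperatureB}. The assumption $q>2$ enters crucially in controlling second moments of the tail.

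First I set up basic preliminary bounds. By H\"older's inequality and \eqref{eq:moment}, $|\bar v|\le C(m_0,M_0,M_q,q)=:C_0$. By Chebyshev, $\int_{\R^n\setminus B_R}f\le M_q R^{-q}$ and $\int_{\R^n\setminus B_R}f|v|^2\le M_q R^{-(q-2)}$, so choosing $R$ large we have $\int_{B_R}f\ge m_0/2$. Next I dispose of the trivial case: if the ``tube mass'' satisfies $m_L:=\int_{L_\delta\cap B_R}f\le m_0/4$, then $\int_{B_R\setminus L_\delta}f\ge m_0/4$, which already contradicts the contradiction hypothesis provided $c\le m_0/4$. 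So I may assume $m_L\ge m_0/4$.

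Now I bring in the pressure. The key geometric observation is that on $L_\delta$, for any $e\perp\sigma$, the quantity $v\cdot e$ is constant up to an error of order $\delta$; call the constant $\ell_e$ and set $\alpha=\ell_e-\bar v\cdot e$, so $|(v-\bar v)\cdot e-\alpha|\le\delta$ on $L_\delta$. Applying \eqref{eq:temperatureB} to this $\sigma$, pick $e\perp\sigma$ with $\int f((v-\bar v)\cdot e)^2\ge p_0$. I then split both the first-moment identity $\int f(v-\bar v)\cdot e \,\d v=0$ and the second-moment integral over the three pieces $L_\delta\cap B_R$, $B_R\setminus L_\delta$, $\R^n\setminus B_R$. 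The first-moment splitting yields
\begin{equation*}
0 = \alpha\, m_L + O(\delta) + O(cR) + O(M_q R^{-(q-1)}),
\end{equation*}
which, using $m_L\ge m_0/4$, gives $|\alpha|\le C\bigl(\delta+cR+M_q R^{-(q-1)}\bigr)$. Plugging this back into the three-piece estimate of the second moment, and using $|v-\bar v|\le R+C_0$ on $B_R$, produces
\begin{equation*}
p_0 \;\le\; M_0(|\alpha|+\delta)^2 \;+\; c(R+C_0)^2 \;+\; C\,M_q R^{-(q-2)} \;+\; C\,C_0^2 M_q R^{-q}.
\end{equation*}

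The finish is to choose the parameters in the correct order. First take $R$ so large that the last two tail terms are $\le p_0/4$ (this is exactly where $q>2$ is used, to get $R^{-(q-2)}\to 0$). Then take $c$ small enough that $c(R+C_0)^2\le p_0/4$ and the $cR$-contribution in $|\alpha|$ is controlled. Finally take $\delta$ small enough that the $\delta$-contribution in $M_0(|\alpha|+\delta)^2$ is $\le p_0/4$. This yields $p_0\le 3p_0/4$, a contradiction. The main potential obstacle is organizational: making sure the choices $R\to\infty$, then $c\to 0$, then $\delta\to 0$ are consistent and produce a single triple $(R,\delta,c)$ depending only on $m_0,M_0,p_0,M_q,q$. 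Aside from that bookkeeping, every step is elementary once the geometric remark (that $v\cdot e$ is essentially constant on $L_\delta$ for $e\perp\sigma$) is in place.
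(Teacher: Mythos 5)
Your proof is correct and reaches the conclusion by a genuinely different route than the paper's, although both rest on the same three ingredients: the vanishing first moment $\int f(w)(w-\bar v)\cdot e\,\d w=0$, the pressure lower bound \eqref{eq:temperatureB}, and tail control via the $q>2$ moment bound. The paper first prepares three lemmas and then runs a casework on whether $L$ is near the axis $\langle\sigma\rangle$ (where $\sigma$ is the direction of $L$): if $L_\delta$ fits inside a slightly fatter tube around $\langle\sigma\rangle$, \autoref{lemma:mass-location} finishes directly; otherwise $L_\delta$ lies to one side of a hyperplane $\{w\cdot e\ge\delta\}$ with $e\perp\sigma$, and \autoref{lemma:center-of-mass} (a centering argument driven by the first-moment identity) produces mass on the opposite side, hence outside $L_\delta$. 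Your argument folds all of this into a single contradiction: with $\alpha$ denoting the displacement of the tube's $e$-coordinate from $\bar v\cdot e$, the first-moment identity forces $|\alpha|$ to be small whenever almost all the mass lives in $L_\delta\cap B_R$, and then the second moment in direction $e$ is concentrated in a window of width $2\delta$ around $\alpha$, so the directional pressure is $O\bigl((|\alpha|+\delta)^2\bigr)$ plus small tail and exterior contributions, contradicting $p_0>0$. This avoids the geometric casework entirely and keeps the dependence on $R,c,\delta$ transparent; the price is the bookkeeping you flag (the $R\to c\to\delta$ order of choices), which does close up consistently since $|\alpha|$ and each error term decay in the correct variables as $R\to\infty$, $c\to 0$, $\delta\to 0$. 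The paper's modular route has the advantage that \autoref{lemma:mass-location} and \autoref{lemma:center-of-mass} are independently useful building blocks, but as a self-contained proof of the proposition yours is at least as clean.
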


\begin{remark}
\label{rmk:qmoments}
Note that \autoref{prop:subtract-tube} does not hold without the assumption that the $q$th moment is finite for some $q > 2$, \eqref{eq:moment}. Indeed, when only boundedness of the mass \eqref{eq:mass}, energy \eqref{eq:energy}, and pressure \eqref{eq:temperature} are assumed, then one can construct counterexamples to \autoref{prop:subtract-tube}. For simplicity, we only give a counterexample in 2D. However, a similar construction also works in higher dimensions. Consider for $R > 1$ the sets 
\begin{align*}
A_1 = (-R^{-3},R^{-3}) \times (-R,R), \quad A_2 = (-R,R) \times (-R^{-3}, R^{-3}), \quad A = (-R^{-1},R^{-1}) \times (-R^{-1},R^{-1}),
\end{align*}
and define $f_R(v) = \1_{A_1 \cup A_2}(v) + R^2 \1_{A}(v)$. Then by construction, \eqref{eq:mass} holds true with $m_0 := 4$, $M_0 := 8 \ge 4 + 4 R^{-2}$. Moreover, it holds \eqref{eq:energy} with $E_0 := C_1$ and \eqref{eq:temperature} with $c_1E_0/M_0 \ge c_1 C_1 /8 =: p_0$ for some $0 < c_1 < C_1 < \infty$. In particular, $m_0,M_0,E_0$, and $p_0$, can be chosen independent of $R$. Moreover, note that for any $q > 2$, it holds for the $q$th moment of $f_R$ that $v^{(q)} \ge C_2 R^{q-2}$ for some $C_2 > 0$, so \eqref{eq:moment} fails for any $q > 2$, by taking $R \to \infty$. Moreover, note that $f_R$ violates the property in \autoref{prop:subtract-tube}. Indeed, given any $\delta \in (0,1)$, we let $L = \R e_2$, and observe that for $R^{-1} < \delta$ it holds
\begin{align*}
\int_{\R^n \setminus L_{\delta}} f_R(w) \d w = |A_2 \setminus L_{\delta}| = \left| \big( (-R,R) \times (-R^{-3} , R^{-3}) \big) \setminus \big (-\delta,\delta) \times \R^n \big) \right| \le 4R^{-2} \to 0.
\end{align*}
Since the right-hand side vanishes as $R \to \infty$, the statement of \autoref{prop:subtract-tube} fails for $f_R$.
\end{remark}

The proof of \autoref{prop:subtract-tube} requires some preparatory work. We start with the following lemma.

\begin{lemma}
\label{lem:1}
Assume that $f$ is nonnegative and satisfies \eqref{eq:mass} and \eqref{eq:temperatureB}. Then, for any $0 < \lambda < p_0$ there exists $\eta >0$ depending only on $M_0$, $p_0$, and $\lambda$, such that  
\begin{align*}
 \sup_{\substack{e\perp \sigma \\ e \in \mathbb{S}^{n-1}}} \int_{\{ |w \cdot e| \ge \eta \}} f(\bar{v} + w) |w \cdot e|^2 \d w \ge \lambda\qquad\text{for any}\quad \sigma \in \mathbb{S}^{n-1}.
\end{align*} 
\end{lemma}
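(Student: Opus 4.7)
The statement is essentially a pigeonhole/splitting argument: condition \eqref{eq:temperatureB} already puts at least $p_0$ worth of weighted mass in the directional second moment $\int f(\bar v+w)|w\cdot e|^2 \d w$ for some $e\perp \sigma$, and we only need to show that the part of this integral contributed by $|w\cdot e|<\eta$ is negligible if $\eta$ is small, depending only on $M_0, p_0, \lambda$.

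The plan is as follows. Fix an arbitrary $\sigma \in \mathbb{S}^{n-1}$ and set $\varepsilon := p_0 - \lambda > 0$. By \eqref{eq:temperatureB} applied to this $\sigma$, I can choose $e^{*} \in \mathbb{S}^{n-1}$ with $e^{*} \perp \sigma$ such that
\[
\int_{\R^n} f(\bar v + w)\,|w \cdot e^{*}|^{2} \d w \;\ge\; p_{0} - \tfrac{\varepsilon}{2}.
\]
(If the supremum is not attained, just pick $e^{*}$ to approximate it within $\varepsilon/2$; this only uses that the sup is $\ge p_0$.) Next, I split the integral as
\[
\int_{\R^n} f(\bar v + w)\,|w\cdot e^{*}|^{2} \d w
= \int_{\{|w\cdot e^{*}|<\eta\}} f(\bar v+w)\,|w\cdot e^{*}|^{2} \d w + \int_{\{|w\cdot e^{*}|\ge \eta\}} f(\bar v+w)\,|w\cdot e^{*}|^{2} \d w,
\]
and control the small-scale piece via $|w\cdot e^{*}|^{2} \le \eta^{2}$ and the mass bound \eqref{eq:mass}, obtaining a bound of $\eta^{2}M_{0}$.

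Combining the two displays yields
\[
\int_{\{|w\cdot e^{*}|\ge \eta\}} f(\bar v+w)\,|w\cdot e^{*}|^{2} \d w \;\ge\; p_{0} - \tfrac{\varepsilon}{2} - \eta^{2} M_{0}.
\]
Finally, I choose $\eta := \sqrt{\varepsilon/(2M_{0})} = \sqrt{(p_{0}-\lambda)/(2M_{0})}$, so that $\eta^{2}M_{0} = \varepsilon/2$, and the right-hand side becomes $p_{0} - \varepsilon = \lambda$. Since $e^{*}\perp \sigma$, the supremum over $e\perp\sigma$ in the statement is at least $\lambda$, and the chosen $\eta$ depends only on $M_0, p_0, \lambda$, as required. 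The construction is uniform in $\sigma$ because \eqref{eq:temperatureB} is uniform in $\sigma$ (it holds at the level of the infimum), so the same $\eta$ works for every $\sigma \in \mathbb{S}^{n-1}$.

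There is no real obstacle here; the only mildly subtle point is that \eqref{eq:temperatureB} is stated as a supremum which need not be attained, so one has to allow an $\varepsilon/2$ cushion when selecting $e^{*}$. The finiteness of the mass bound $M_{0}$ is what makes the truncation at $|w\cdot e^*|<\eta$ harmless; note that the energy bound \eqref{eq:energy} is not used, and indeed the argument never touches $|w|$ directly, only the projection $w\cdot e^{*}$.
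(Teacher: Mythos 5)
Your proof is correct and follows essentially the same route as the paper: bound the truncated piece $\int_{\{|w\cdot e|<\eta\}}f(\bar v+w)|w\cdot e|^2\d w\le \eta^2 M_0$ via the mass bound, subtract, and choose $\eta$ so the defect equals $p_0-\lambda$. The paper's only (cosmetic) simplification is that it keeps the inequality pointwise in $e$ and takes the supremum over $e\perp\sigma$ \emph{after} the subtraction, which sidesteps the need to select a near-optimal $e^*$ with an $\varepsilon/2$ cushion and yields the slightly larger admissible value $\eta=((p_0-\lambda)/M_0)^{1/2}$.
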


\begin{proof}
We have for $\eta >0$:
\begin{align*}
\int_{ \{ |w \cdot e| \le \eta \}} f(\bar{v} + w) |w \cdot e|^2 \d w \le \eta^2 \int_{\R^n} f(\bar{v} + w) \d w \le \eta^2 M_0,
\end{align*}
so that 
\begin{align*}
\int_{\{ |w \cdot e| \ge \eta \}} f(\bar{v} + w) |w \cdot e|^2 \d w &= \int_{\R^n} f(\bar{v} + w) |w \cdot e|^2 \d w   - \int_{ \{ |w \cdot e| \le \eta \} } f(\bar{v} + w) |w \cdot e|^2 \d w\\
&\ge   \int_{\R^n} f(\bar{v} + w) |w \cdot e|^2 \d w  - \eta^2 M_0.
\end{align*}
Taking the supremum for $e\in \mathbb{S}^{n-1}$ with $e\perp \sigma$, and thanks to \eqref{eq:temperatureB}, we get  the desired result by taking  $\eta =   \left(\frac{p_0-\lambda}{M_0}\right)^{1/2}$.
\end{proof}

Moreover, if ($2+\eps$)-moments are finite we can ensure that the mass from \autoref{lem:1} is contained in a large ball.

\begin{lemma}
\label{lemma:mass-location}
Assume that $f$ is nonnegative and satisfies \eqref{eq:mass}, \eqref{eq:temperatureB}, and \eqref{eq:moment} for some $q > 2$. Then, there exist $R,\lambda > 0$ depending only on $m_0,M_0, p_0,M_q$, and $q$,  and $\delta_0 \in (0,1)$ depending only on $M_0$ and $p_0$ such that 
\begin{align*}
 \int_{B_R \cap \{{\rm dist}(\cdot, \langle \sigma \rangle ) \ge \delta_0\}} f(\bar v +w) \d w \ge \lambda\qquad\text{for any}\quad \sigma \in \mathbb{S}^{n-1},
\end{align*} 
where $\langle \sigma\rangle:= \{ t\sigma : t\in \R\}$ denotes the line spanned by $\sigma\in \mathbb{S}^{n-1}$. 
\end{lemma}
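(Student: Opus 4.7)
The idea is to upgrade the weighted lower bound provided by \autoref{lem:1} (which controls $\int f(\bar v +w)\,|w\cdot e|^2\,\d w$ on $\{|w\cdot e|\ge \eta\}$) to an unweighted mass lower bound inside a large ball, by combining a simple geometric observation with a tail estimate coming from the moment bound \eqref{eq:moment}.

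First, I would fix a value $\lambda_0 \in (0,p_0)$ (say $\lambda_0 = p_0/2$) and apply \autoref{lem:1}, which provides $\eta > 0$ depending only on $M_0$ and $p_0$, together with some $e \in \mathbb{S}^{n-1}$ with $e \perp \sigma$ such that
\[
\int_{\{|w\cdot e|\ge \eta\}} f(\bar v +w)\,|w\cdot e|^2\,\d w \ge \lambda_0.
\]
The key geometric fact is that, since $e\perp \sigma$, writing $w = t\sigma + w^\perp$ with $w^\perp \perp \sigma$ gives $w\cdot e = w^\perp\cdot e$, and hence
\[
|w\cdot e| \le |w^\perp| = \dist(w,\langle \sigma\rangle).
\]
In particular, $\{|w\cdot e|\ge \eta\} \subset \{\dist(\cdot,\langle\sigma\rangle)\ge \eta\}$, so I will set $\delta_0 = \eta$.

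Next, I would truncate at $|w|\le R$. The moment bound \eqref{eq:moment} together with \eqref{eq:mass} controls the shifted moment
\[
\int_{\R^n} f(\bar v + w)\,|w|^q\,\d w = \int_{\R^n} f(v)\,|v-\bar v|^q\,\d v \le C_q\bigl(M_q + |\bar v|^q M_0\bigr) =: \widetilde M_q,
\]
where $|\bar v|$ is bounded in terms of $m_0,M_0,M_q,q$ via Jensen's inequality applied to $\bar v = \rho^{-1}\!\int f(v)v\,\d v$. By Chebyshev,
\[
\int_{\{|w|> R\}} f(\bar v +w)\,|w|^2\,\d w \le R^{2-q}\,\widetilde M_q.
\]
Choosing $R$ large (depending on $m_0,M_0,p_0,M_q,q$) so that $R^{2-q}\widetilde M_q \le \lambda_0/2$, and using that $|w\cdot e|\le |w|$, I obtain
\[
\int_{\{|w|\le R\}\cap \{|w\cdot e|\ge \eta\}} f(\bar v + w)\,|w\cdot e|^2\,\d w \ge \frac{\lambda_0}{2}.
\]

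Finally, bounding $|w\cdot e|^2\le R^2$ on the truncated set and using the geometric inclusion above, I conclude
\[
\int_{B_R \cap \{\dist(\cdot,\langle \sigma\rangle)\ge \delta_0\}} f(\bar v +w)\,\d w \ge \frac{\lambda_0}{2R^2} =: \lambda,
\]
with constants of the claimed form. The main (mild) technical point is controlling $|\bar v|$ through moment and mass bounds to transfer \eqref{eq:moment} into an estimate on $\int f(\bar v+w)|w|^q\,\d w$; everything else is a direct combination of \autoref{lem:1}, the Chebyshev tail estimate, and the orthogonality $e \perp \sigma$.
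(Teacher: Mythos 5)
Your proof is correct and follows essentially the same path as the paper's: invoke \autoref{lem:1}, truncate the tail via Chebyshev together with the moment bound \eqref{eq:moment} (after controlling $|\bar v|$), drop the weight $|w\cdot e|^2\le R^2$, and use the geometric inclusion $\{|w\cdot e|\ge\delta_0\}\subset\{{\rm dist}(\cdot,\langle\sigma\rangle)\ge\delta_0\}$ valid for $e\perp\sigma$. The only small difference is bookkeeping: the paper carries the supremum over $e\perp\sigma$ throughout and exploits its subadditivity, whereas you fix a single $e$ early on --- to make that step airtight you should note that the supremum bound in \autoref{lem:1} only yields, for each $\eps>0$, some $e$ with integral $\ge\lambda_0-\eps$ (taking $\eps=\lambda_0/2$ suffices and only changes constants).
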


\begin{proof}
First, note that by \autoref{lem:1}, there exists $\delta_0 > 0$ such that we have
\begin{align}
\label{eq:mass-location-help-1}
 \sup_{\substack{e\perp \sigma \\ e \in \mathbb{S}^{n-1}}}\int_{\{ |w \cdot e| \ge \delta_0 \}} f(\bar v + w) |w \cdot e|^2 \d w \ge \frac{p_0}{4}.
\end{align}
Moreover, by \eqref{eq:moment}, we have that for any $\rho > 0$ and using that $(a+b)^q\le C_q(a^q+b^q)$ for all $a, b> 0$:
\begin{align*}
\int_{(\R^n \setminus B_\rho) \cap \{ |w\cdot e| \ge\delta_0 \}} f(\bar v +w) |w \cdot e|^2  \d w &\le \rho^{2-q} \int_{\R^n \setminus B_\rho(\bar{v})} f(w)  |w-\bar{v}|^q \d w\\
&\le C_q \rho^{2-q} (M_q + (M_1/m_0)^q M_0) =: C \rho^{2-q} ,
\end{align*}
where we used that $|\bar{v}| \le M_1/m_0$. Let us now choose $\rho = R$ so large that $R^{2-q}C \le p_0/8$. Then, using \eqref{eq:mass-location-help-1} and taking a supremum, we deduce
\begin{align*}
 \sup_{\substack{e\perp \sigma \\ e \in \mathbb{S}^{n-1}}}\int_{(\R^n \setminus B_R ) \cap \{ |w \cdot e| \ge \delta_0 \} }  f(\bar v + w)|w \cdot e|^2 \d w &\le C R^{2-q}  \le \frac{p_0}{8} \\
& \le \frac{1}{2} \sup_{\substack{e\perp \sigma \\ e \in \mathbb{S}^{n-1}}}\int_{\{ |w\cdot e| \ge \delta_0 \} } f(\bar v + w) |w \cdot e|^2 \d w.
\end{align*}
This implies (by the subadditivity of the supremum)
\begin{align*}
 \sup_{\substack{e\perp \sigma \\ e \in \mathbb{S}^{n-1}}} \int_{(\R^n \setminus B_R) \cap \{ |w \cdot e| \ge \delta_0 \} }  f(\bar v +w) |w \cdot e|^2\d w \le  \sup_{\substack{e\perp \sigma \\ e \in \mathbb{S}^{n-1}}}\int_{B_R \cap \{ |w \cdot e| \ge \delta_0 \} } f(\bar v +w)|w \cdot e|^2  \d w,
\end{align*}
and therefore, again by subadditivity,
\begin{align*}
\frac{p_0}{4}\le  \sup_{\substack{e\perp \sigma \\ e \in \mathbb{S}^{n-1}}}\int_{\{|w\cdot e | \ge \delta_0 \} }  f(\bar v + w) |w \cdot e|^2 \d w \le 2  \sup_{\substack{e\perp \sigma \\ e \in \mathbb{S}^{n-1}}}\int_{B_R  \cap \{|w\cdot e| \ge \delta_0 \} } f(\bar v + w)|w \cdot e|^2  \d w.
\end{align*}
This implies
\begin{align*}
\frac{p_0}{8R^2} \le  \sup_{\substack{e\perp \sigma \\ e \in \mathbb{S}^{n-1}}} \int_{B_R  \cap \{|w\cdot e| \ge \delta_0 \} } f(\bar v + w) \d w,
\end{align*}
which directly gives the desired result, since $\{|w\cdot e| \ge \delta_0 \}\subset \{{\rm dist}(\cdot, \langle\sigma\rangle)\ge \delta_0\}$ for any $e\perp \sigma$ with $e\in \mathbb{S}^{n-1}$. 

\end{proof}

The following lemma implies that the mass is not concentrated on one side of the center of mass $\bar{v}$:

\begin{lemma}
\label{lemma:center-of-mass}
Assume that $f$ is nonnegative and satisfies \eqref{eq:mass} and \eqref{eq:energy}. Moreover, assume that there exist $\eta, \lambda_1 > 0$ such that for some $e_0 \in \mathbb{S}^{n-1}$ it holds:
\begin{align}
\label{eq:center-of-mass-ass}
\int_{\{ w\cdot e_0 \ge \eta \}} f(\bar v + w) \d w \ge \lambda_1.
\end{align}
Then, there exist $\varrho, \lambda_2 > 0$, depending only on $\lambda_1,\eta,m_0,M_1,E_0$, such that
\begin{align*}
\int_{B_R  \cap \{ w\cdot e_0 \le 0 \}} f(\bar v +w) \d w \ge \lambda_2.
\end{align*}
\end{lemma}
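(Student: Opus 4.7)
The plan is to exploit the defining property of the center of mass, namely that $\int_{\R^n} w\, f(\bar v + w)\d w = 0$, in order to force some positive mass on the opposite side of the hyperplane $\{w\cdot e_0 = 0\}$, and then localize that mass in a large ball via the energy bound.

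Projecting the identity $\int w f(\bar v + w)\d w = 0$ onto $e_0$ gives
\[
\int_{\{w\cdot e_0 \ge 0\}} (w\cdot e_0)\, f(\bar v + w)\d w
= - \int_{\{w\cdot e_0 \le 0\}} (w\cdot e_0)\, f(\bar v + w)\d w.
\]
The left-hand side is bounded below by $\eta\lambda_1$, using only the mass on $\{w\cdot e_0\ge \eta\}$ from the hypothesis \eqref{eq:center-of-mass-ass}. Hence
\[
-\int_{\{w\cdot e_0 \le 0\}} (w\cdot e_0)\, f(\bar v + w)\d w \ge \eta \lambda_1.
\]

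Next I would localize this integral in a ball $B_\varrho$. The tail is controlled by the energy: using Cauchy--Schwarz (or simply $|w\cdot e_0|\le |w|\le \varrho^{-1}|w|^2$ on $\R^n\setminus B_\varrho$),
\[
\left|\int_{(\R^n\setminus B_\varrho)\cap\{w\cdot e_0\le 0\}} (w\cdot e_0)\, f(\bar v + w)\d w\right|
\le \varrho^{-1}\int_{\R^n} |w|^2 f(\bar v + w)\d w.
\]
Expanding $|v-\bar v|^2 = |v|^2 - 2\bar v\cdot v + |\bar v|^2$ and using $\int v f \d v = \rho\, \bar v$ shows $\int |w|^2 f(\bar v+w)\d w \le 2E_0$. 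Choosing $\varrho := 4E_0/(\eta\lambda_1)$ makes the tail at most $\eta\lambda_1/2$, and therefore
\[
-\int_{B_\varrho\cap\{w\cdot e_0\le 0\}} (w\cdot e_0)\, f(\bar v + w)\d w \ge \frac{\eta\lambda_1}{2}.
\]

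Finally, since $|w\cdot e_0|\le \varrho$ on $B_\varrho$, we conclude
\[
\int_{B_\varrho\cap\{w\cdot e_0\le 0\}} f(\bar v + w)\d w \ge \frac{\eta\lambda_1}{2\varrho} =: \lambda_2,
\]
with $\varrho,\lambda_2$ depending only on $\eta,\lambda_1,E_0$ (and through $|\bar v|\le M_1/m_0$ on $m_0,M_1$ if one prefers to bound $|\bar v|$ separately). No step is truly the main obstacle; the only subtlety is to ensure the truncation radius $\varrho$ depends only on the stated quantities, which is immediate from the identity $\int |v-\bar v|^2 f\d v\le 2E_0$.
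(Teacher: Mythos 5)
Your proof is correct and takes essentially the same route as the paper: project the center-of-mass identity onto $e_0$, lower-bound the positive-side integral by $\eta\lambda_1$ using the hypothesis, and truncate the negative-side integral to a ball of radius $\varrho$ using the second-moment bound, then divide by $\varrho$. The only (minor) difference is that you compute the tail more sharply: you observe $\int |w|^2 f(\bar v + w)\,dw = 2E - \rho|\bar v|^2 \le 2E_0$ directly, whereas the paper uses the cruder bound $\int f(w)|w-\bar v|^2\,dw \lesssim E_0 + (M_1/m_0)^2$ and hence carries the extra dependence on $m_0, M_1$; your version removes that dependence, which is a small but genuine simplification.
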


\begin{proof}
First, note that for any $\varrho > 0$, we have
\begin{align}
\label{eq:center-of-mass-help-1}
\int_{\R^n \setminus B_\varrho(\bar{v})} f(w) |(w - \bar{v}) \cdot e_0| \d w \le \varrho^{-1} \int_{\R^n \setminus B_\varrho(\bar{v})} f(w) |w - \bar{v}|^2 \d w \le C \varrho^{-1} (E_0 + M_1/m_0)^2),
\end{align}
where we used that $|\bar{v}| \le   M_1/m_0$. Moreover, note that due to the definition of $\bar{v}$, it holds
\begin{align*}
\int_{\R^n} f(w) (w - \bar{v}) \cdot e \d w = 0\qquad\text{for all}\quad e\in \mathbb{S}^{n-1}.
\end{align*}
Therefore, using also \eqref{eq:center-of-mass-ass}, and \eqref{eq:center-of-mass-help-1}, we obtain
\begin{align*}
\lambda_1  \eta &\le \eta \int_{\{ (w - \bar{v})\cdot e_0 \ge \eta \}} f(w)\d w \\
&\le \int_{\{ (w - \bar{v})\cdot e_0  \ge 0 \}} f(w) (w - \bar{v}) \cdot e_0 \d w \\
&= - \int_{\{  (w - \bar{v})\cdot e_0  \le 0 \}} f(w) (w - \bar{v}) \cdot e_0 \d w \\
&\le \int_{\R^n \setminus B_\varrho(\bar{v})} f(w) |(w - \bar{v}) \cdot e_0| \d w + \int_{B_\varrho(\bar{v}) \cap \{   (w - \bar{v})\cdot e_0 \le 0 \}} f(w) |(w - \bar{v}) \cdot e_0| \d w \\
&\le C\varrho^{-1} (E_0 + (M_1/m_0)^2) + \varrho \int_{B_\varrho(\bar{v}) \cap \{ (w - \bar{v})\cdot e_0 \le 0 \}} f(w) \d w.
\end{align*}
Altogether, choosing $\varrho$ so large that $ C\varrho^{-1} (E_0 + (M_1/m_0)^2) \le \frac{\lambda_1 \eta}{2}$, we deduce
\begin{align*}
\int_{B_\varrho(\bar{v}) \cap \{   (w - \bar{v})\cdot e_0  \le 0 \}} f(w) \d w \ge \frac{\lambda_1 \eta}{2\varrho},
\end{align*}
as desired.
\end{proof}

As a consequence of the previous two lemmas, we are now in a position to prove \autoref{prop:subtract-tube}.

\begin{proof}[Proof of \autoref{prop:subtract-tube}]
First, note that since $|\bar v|\le M_1/m_0$, it suffices to establish the existence of $R,\delta,c>0$ such that
\begin{align}
\label{eq:subtract-tube-help-0}
\int_{B_R  \setminus L_{\delta}} f(\bar v + w) \d w \ge c.
\end{align}
To prove \eqref{eq:subtract-tube-help-0}, we fix $R$, $\delta_0$, and $\lambda$, to be the parameters from \autoref{lemma:mass-location}. Hence, for any $\sigma \in \mathbb{S}^{n-1}$:
\begin{align}
\label{eq:subtract-tube-help-1}
   \int_{B_{R}  \cap \{{\rm dist}(\cdot, \langle \sigma\rangle) \ge \delta_0\}} f(\bar v + w) \d w \ge \lambda.
\end{align}
Let us set $\delta = \frac{\delta_0}{2}$ and let $L_{\delta}$ be a fixed linear tube of radius $\delta$ along the line $L := \{a_0 + t e_0 : t\in \R\}\subset \R^n$, for some $a_0\in \R^n$, $e_0\in \mathbb{S}^{n-1}$. 

We choose $\sigma = e_0$. If $L_\delta \subset \{{\rm dist}(\cdot, \langle\sigma\rangle )< \delta_0 = 2\delta\}$ we are done by \eqref{eq:subtract-tube-help-1}, so let us assume  that 
\[
L_\delta \subset \{w\cdot e \ge \delta_0/2 = \delta\}\quad\text{for some}\quad e\perp \sigma,\ e\in \mathbb{S}^{n-1}.
\]
From \eqref{eq:subtract-tube-help-1} we can further assume 
\[
\int_{B_{R} \cap L_\delta} f( \bar v + w) \d w \ge \frac{\lambda}{2},
\]
since otherwise \eqref{eq:subtract-tube-help-0} follows with $c = \frac{\lambda}{2}$. In particular, we have 
\[
\int_{B_{R} \cap \{w\cdot e \ge \delta\}} f( \bar v + w) \d w \ge \frac{\lambda}{2},
\]
so that, by \autoref{lemma:center-of-mass} we obtain 
\[
\int_{B_{R} \cap \{w\cdot e \le 0\}} f( \bar v + w) \d w \ge \lambda_2,
\]
for some $\lambda_2 > 0$. The result now follows because $B_R\setminus L_\delta\supset B_R\cap \{w\cdot e \le 0\}$. 
\end{proof}

\section{Proof of nondegeneracy}
\label{sec:nondeg}

In this section, we establish the nondegeneracy of the Boltzmann kernel $\tilde{K}_f$ under the change of variables for any $v_0 \in \R^n$.

\begin{theorem}
\label{thm:change-of-variables-nondegeneracy}
Let $s\in (0, 1)$ and $\gamma \in (-n,\gamma_0]$ for some $\gamma_0 > -n$. Assume that $f$ is nonnegative and satisfies \eqref{eq:mass}, \eqref{eq:temperatureB}, and \eqref{eq:moment} for some $q > 2$. Then, the kernel $\tilde{K}_f$ given by \eqref{eq:Ktildedef} and \eqref{s-to-1-kernel} with $v_0\in \R^n$ satisfies 
\begin{align*}
\inf_{e \in \mathbb{S}^{n-1}} \int_{B_r} \tilde{K}_f(v,v+h) (h \cdot e)_+^2 \d h \ge  \lambda r^{2-2s} \qquad\text{for all}\quad  r > 0, ~~ v \in B_2,
\end{align*}
uniformly in $v_0$, with $\lambda > 0$ depending only on $n,m_0,M_0,p_0,M_q,q,$ and $\gamma_0$.
\end{theorem}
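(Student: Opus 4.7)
The plan is to use the integral lower bound \eqref{eq:K-bounds} for $K_f$ and reduce the nondegeneracy to the mass-spreading statement \autoref{prop:subtract-tube}; a Funk--Hecke type identity converts spherical averages of hyperplane integrals of $f$ into a weighted volume integral. Consider first the case $|v_0| < 2$, where $\tilde K_f(v, v+h) = K_f(v_0+v, v_0+v+h)$. Fix $v \in B_2$, $e \in \mathbb{S}^{n-1}$, $r > 0$, and write $v' := v_0 + v \in B_4$. Using \eqref{eq:K-bounds} and polar coordinates $h = \rho\hat h$,
\begin{align*}
\int_{B_r} K_f(v', v'+h)(h\cdot e)_+^2\d h
&\gtrsim \int_0^r \rho^{1-2s}\d \rho \int_{\mathbb{S}^{n-1}} (\hat h \cdot e)_+^2 \left(\int_{\hat h^\perp} f(v'+w) |w|^{\gamma+2s+1} \d w\right) \d \hat h \\
&= \frac{r^{2-2s}}{2-2s}\, I(e),
\end{align*}
reducing the task to a uniform lower bound $I(e) \ge c > 0$. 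Exchanging the roles of $\hat h$ and $\omega = w/|w|$ by Fubini (using that for each $\omega$, $\{\hat h \in \mathbb{S}^{n-1} : \hat h \perp \omega\}$ is a great $(n-2)$-sphere), together with the rotational identity $\int_{\mathbb{S}^{n-1}\cap\omega^\perp}(\hat h\cdot e)_+^2 \d\sigma = c_n(1-(e\cdot\omega)^2)$, I obtain the Funk--Hecke type formula
\begin{align*}
I(e) = c_n \int_{\R^n} f(v'+w)\,|w|^{\gamma+2s}\left(1 - \frac{(e\cdot w)^2}{|w|^2}\right) \d w.
\end{align*}

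The factor $1 - (e \cdot w/|w|)^2$ equals $\sin^2\theta$, the squared sine of the angle between $w$ and $e$, which also equals $\dist(v'+w, L)^2/|w|^2$ with $L := v' + \R e$. Applying \autoref{prop:subtract-tube} to the line $L$ produces a set $A \subset B_R \setminus L_\delta$ with $\int_A f \ge c$, for $R,\delta,c$ depending only on the postulated constants. For each $w' = v'+w \in A$ one has $\delta \le |w| \le R+4$ and $\sin^2\theta \ge \delta^2/|w|^2$, so $|w|^{\gamma+2s}\sin^2\theta \ge \delta^2 |w|^{\gamma+2s-2}$. Since $|w|$ is bounded above and below, $|w|^{\gamma+2s-2}$ is bounded below by a positive constant (depending on the sign of $\gamma+2s-2$, either from $|w|\ge\delta$ or from $|w|\le R+4$), which yields $I(e) \gtrsim c > 0$ uniformly in $e$ and $v'$, finishing this case.

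For $|v_0| \ge 2$, I change variables $h' = \tau_0(h)$ inside the integral (with Jacobian $|v_0|$) and unfold \eqref{eq:Ktildedef}. The prefactor $|v_0|^{-1-\gamma-2s}$ is chosen precisely to absorb the weight growth $|w|^{\gamma+2s+1} \sim |v_0|^{\gamma+2s+1}$ in \eqref{eq:K-bounds}: for $w' = \tilde v + w$ in the mass region $B_R$ of $f$, and $|\tilde v| \sim |v_0|$, one has $|w| \sim |v_0|$. After the substitution, the argument proceeds in parallel to the case $|v_0|<2$: one extracts $r^{2-2s}$ from the radial part, applies the Funk--Hecke identity on the ellipsoid $\tau_0(B_r)$ with the anisotropic factor $(\tau_0^{-1}(h')\cdot e)_+^2$, and concludes via \autoref{prop:subtract-tube}. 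The main obstacle is the bookkeeping of these anisotropic rescalings and the verification that all factors of $|v_0|$ cancel exactly so that the final bound is $\lambda r^{2-2s}$ with $\lambda$ independent of $v_0$; this uniformity is precisely the feature for which the change of variables $\mathcal{T}_0$ in \cite{ImSi22} was designed.
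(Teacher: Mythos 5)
Your argument for $|v_0| < 2$ is correct and mirrors the paper's \autoref{thm:Boltzmann-kernel-nondegenerate} and \autoref{lemma:Gineq}: the Funk--Hecke/Fubini identity is exactly \eqref{eq:weighted-trafo}, and the lower bound via \autoref{prop:subtract-tube} with $\sin^2\theta \ge \delta^2/|w|^2$, $\delta \le |w| \le R+4$ is precisely the paper's estimate \eqref{eq:eq-case-1_Br} restricted to the range $|v| \le 4$. (Small caveat: your ``$\gtrsim$'' has to carry the $(1-s)$ factor from the normalization \eqref{s-to-1-kernel} if the constant is to be $s$-independent; that $(1-s)$ exactly cancels your $1/(2-2s)$.)

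The case $|v_0| \ge 2$, however, is where the real work of the theorem lies, and your treatment has a genuine gap. You correctly identify the heuristic scaling, but the claim that ``all factors of $|v_0|$ cancel'' is precisely what must be proved, and the naive bookkeeping you describe does not yield uniformity. Two compensating effects have to be tracked explicitly, and neither appears in your plan. First, after the Fubini/Funk--Hecke step over the ellipsoid $E_r$, the inner integral is over $E_r \cap \{\tilde h \perp (\tilde w - \tilde v)\}$, an $(n-1)$-dimensional ellipsoid whose smallest radius can be as small as $r/|v_0|$ in the worst direction; estimating from below by this worst radius would give a lower bound off by a factor $|v_0|^{2s-2}$. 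What rescues the argument (paper's Step~2) is the geometric fact that when $\tilde w \in B_R(\bar v)$ and $|v_0|$ is large, the vector $\tilde w - \tilde v$ is nearly parallel to $v_0$, so $\sin^2(v_0, \tilde w - \tilde v) \lesssim |v_0|^{-2}$ and the ellipsoid's radius is actually comparable to $r$. Second, the anisotropic factor $|\tau_0^{-1}(e)|^2 = 1 + (|v_0|^2-1)\cos^2(v_0,e)$ varies between $1$ and $|v_0|^2$ depending on $e$, while the mass-spreading estimate only gives $\sin^2((\tilde w - \tilde v), \tau_0^{-1}(e)) \gtrsim (1+|v_0|)^{-2}$; these two must be balanced against each other through a case split on whether $\cos^2(v_0,e)$ is above or below a threshold (paper's Step~3, culminating in \eqref{eq:trafo-nondegeneracy-help-0}). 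Without these two observations, the bound you obtain carries leftover negative powers of $|v_0|$ and is not uniform in $v_0$, which is the entire point of the statement.
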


We split the proof into two parts, treating separately the cases $|v_0| \le 2$ and $|v_0| > 2$.

\subsection{Nondegeneracy near the origin}

First, we establish a nondegeneracy estimate which does not take into account the change of variables. This result will be used in the proof of \autoref{thm:change-of-variables-nondegeneracy} only in case $|v_0| \le 2$. Recall that for the bounds on $K_f$ we assume \eqref{s-to-1-kernel}.

\begin{proposition}
\label{thm:Boltzmann-kernel-nondegenerate}
Let $s\in (0, 1)$ and $\gamma \in (-n,\gamma_0]$ for some $\gamma_0 > -n$. Assume that $f$ is nonnegative and satisfies \eqref{eq:mass}, \eqref{eq:temperatureB}, and \eqref{eq:moment} for some $q  > 2$. Then, for every $r > 0$ and $v \in \R^n$:
\begin{align}
\label{eq:nondegenerate}
\inf_{e \in \mathbb{S}^{n-1}} \int_{B_r} K_f(v,v+h) (h \cdot e)_+^2 \d h \ge  \lambda(v) r^{2-2s},
\end{align}
where $\lambda(v) \ge c (1+|v|)^{\gamma+2s-2}$ for some $c>0$ depending only on $n, m_0, M_0, p_0, M_q$, $q$, and $\gamma_0$.
\end{proposition}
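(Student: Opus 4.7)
The plan is to reduce the claimed bound to a purely mass-theoretic statement via a change of coordinates, and then invoke \autoref{prop:subtract-tube}.

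First, I would invoke the pointwise lower bound on the Boltzmann kernel, which with the normalization \eqref{s-to-1-kernel} carries a factor $(1-s)$:
\[
K_f(v,v+h)\gtrsim (1-s)\,|h|^{-n-2s}\int_{w\perp h} f(v+w)\,|w|^{\gamma+2s+1}\d w.
\]
Passing to spherical coordinates $h=\rho\sigma$ with $\rho\in(0,r)$ and $\sigma\in \mathbb{S}^{n-1}$, the radial factor $(h\cdot e)_+^2\rho^{-n-2s}\rho^{n-1}=\rho^{1-2s}(\sigma\cdot e)_+^2$ gives $\int_0^r \rho^{1-2s}\,\d\rho=\frac{r^{2-2s}}{2(1-s)}$, which cancels the $(1-s)$. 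Hence it suffices to establish the $s$-independent bound
\[
\inf_{e\in\mathbb{S}^{n-1}} \int_{\mathbb{S}^{n-1}} (\sigma\cdot e)_+^2\int_{\sigma^\perp} f(v+w)\,|w|^{\gamma+2s+1}\d w\,\d\sigma \;\ge\; c\,(1+|v|)^{\gamma+2s-2}.
\]

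Next, I would rewrite the left-hand side as an integral against $f$. Parametrising $w=r\omega$ with $\omega\in \mathbb{S}^{n-2}_\sigma$, swapping the roles of $\sigma$ and $\omega$ on the symmetric set $\{(\sigma,\omega)\in \mathbb{S}^{n-1}\times \mathbb{S}^{n-1}:\sigma\perp\omega\}$, and using the explicit evaluation $\int_{\mathbb{S}^{n-2}_\omega}(\sigma\cdot e)_+^2\,\d\sigma_{n-2}=c_n(1-(\omega\cdot e)^2)$, followed by the substitution $u=v+r\omega$ with Jacobian $r^{n-1}$, yields the identity
\[
\int_{\mathbb{S}^{n-1}} (\sigma\cdot e)_+^2\int_{\sigma^\perp} f(v+w)\,|w|^{\gamma+2s+1}\d w\,\d\sigma \;=\; c_n\int_{\R^n} f(u)\,|u-v|^{\gamma+2s-2}\,\dist(u,L_v)^2\,\d u,
\]
where $L_v:=v+\R e$ is the line through $v$ in direction $e$. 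The geometric content is that $|u-v|^2-((u-v)\cdot e)^2=\dist(u,L_v)^2$.

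Finally, I would apply \autoref{prop:subtract-tube} with the line $L=L_v$ to obtain constants $R\ge 2$, $\delta>0$ and $c_0>0$ (depending only on $n,m_0,M_0,p_0,M_q,q$) such that
\[
\int_{B_R\setminus (L_v)_\delta} f(u)\,\d u \;\ge\; c_0.
\]
On this set, $\dist(u,L_v)\ge \delta$, and since $v\in L_v$, also $|u-v|\ge \delta$. I would then split into the two regimes: for $|v|\le 2R$ one has $|u-v|\in[\delta,3R]$, so $|u-v|^{\gamma+2s-2}$ is bounded below by a constant depending only on $\delta, R, \gamma_0$, while $(1+|v|)^{\gamma+2s-2}$ is simultaneously bounded above and below by constants with the same dependence; for $|v|>2R$ one has $|u-v|\in[|v|/2,3|v|/2]$, so $|u-v|^{\gamma+2s-2}\asymp |v|^{\gamma+2s-2}\asymp (1+|v|)^{\gamma+2s-2}$. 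In both regimes, restricting the integral in the identity above to $B_R\setminus(L_v)_\delta$ gives the desired lower bound $c(1+|v|)^{\gamma+2s-2}$ after combining with the mass estimate $c_0$ and the uniform lower bound $\delta^2$ on $\dist(u,L_v)^2$.

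The main obstacle is verifying the Fubini/coarea identity cleanly (the dimensional constant $c_n$ is routine, but one must carefully keep track of the $(1-s)$ factor so as not to lose robustness in the grazing-collision limit) and handling the case analysis so that the final constant $c$ depends only on the parameters permitted in the statement, not on $s$. The sign of $\gamma+2s-2$ is irrelevant to the argument since the two case distinctions above cover both signs uniformly.
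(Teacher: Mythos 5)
Your argument is correct and follows essentially the same route as the paper's: the Fubini/coarea identity you derive, $\int_{\mathbb{S}^{n-1}}(\sigma\cdot e)_+^2\int_{\sigma^\perp}f(v+w)|w|^{\gamma+2s+1}\,\d w\,\d\sigma = c_n\int_{\R^n}f(u)|u-v|^{\gamma+2s-2}\dist(u,L_v)^2\,\d u$, is precisely the paper's reformulation \eqref{eq:nondegenerate-rewritten} obtained via \eqref{eq:weighted-trafo} and \autoref{lemma:Gineq}, and the concluding step (apply \autoref{prop:subtract-tube} to the line $v+\R e$, use $\dist\ge\delta$ and $|u-v|\asymp 1+|v|$ on $B_R\setminus(L_v)_\delta$) is the same reduction the paper carries out, just centered at the origin rather than at $\bar v$. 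Your bookkeeping of the $(1-s)$ cancellation and the case split on $|v|\lessgtr 2R$ is a clean rewriting of the paper's bounds \eqref{eq:G-estimate-case-1}--\eqref{eq:eq-case-1_Br}.
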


First, let us rewrite $K_f(v,v+h) = K_f(v;h)$ and deduce from \eqref{eq:K-bounds}-\eqref{s-to-1-kernel} that $K_f(v;\cdot)$ is comparable to a homogeneous kernel for any $v$, so that we can write  
\begin{equation}
\label{eq:Kf_a}
K_f(v;h) \asymp (1-s)|h|^{-n-2s} a(v;h/|h|)\quad\text{where}\quad a(v;\theta) = \int_{w \perp \theta} f(v + w) |w|^{\gamma + 2s + 1} \d w.
\end{equation}
Therefore, using polar coordinates and also the symmetry $a(v,\theta) = a(v,-\theta)$, the nondegeneracy condition \eqref{eq:nondegenerate} can be equivalently reformulated as follows: Check that for any $v \in \R^n$ and any $e \in \mathbb{S}^{n-1}$ it holds
\begin{align}
\label{eq:hom-nondegenerate}
\int_{\mathbb{S}^{n-1}} a(v;\theta) |\theta \cdot e|^2 \d \theta  = 2 \int_{\mathbb{S}^{n-1}} a(v;\theta) (\theta \cdot e)_+^2 \d \theta  \ge \lambda(v),
\end{align}
for some $\lambda(v)$ comparable to $(1+|v|)^{\gamma+2s-2}$. The resulting constant in \eqref{eq:nondegenerate} does not depend on $s$ due to the following identity: $(1-s) \int_{0}^r |h|^{-1-2s+2}\, dh = r^{2-2s}$. 

\begin{remark}
Note that the left-hand side in \eqref{eq:hom-nondegenerate} vanishes if and only if there exist $v \in \R^n$ and $e \in \mathbb{S}^{n-1}$ such that for any $\theta \not\perp e$ it holds 
\begin{align*}
f(v+w) \equiv 0 ~~ \quad\text{for a.e.}\  w \perp \theta,
\end{align*}
or, in other words, if $f$ is supported on a line. 
\end{remark}

The following is a standard calculus identity, it can be found in \cite{Sil16} (see also \cite[Lemma A.10]{ImSi20}).
\begin{align*}
\int_{\mathbb{S}^{n-1}} \int_{w \perp \theta} g(w) \d w \d \theta = \int_{\R^n} g(z) |z|^{-1} \d z.
\end{align*}

We will make use of the following weighted version of such identity:
\begin{align}
\label{eq:weighted-trafo}
\int_{\mathbb{S}^{n-1}} \left( \int_{w \perp \theta} g(w,\theta) \d w \right) \d \theta = \int_{\R^n} \left( \int_{\theta \perp z} g(z,\theta) \d \theta \right) |z|^{-1} \d z.
\end{align}

With the help of this identity, we can derive the following equivalent version of the nondegeneracy condition \eqref{eq:nondegenerate}:

\begin{lemma}
\label{lemma:Gineq}
The inequality \eqref{eq:nondegenerate} is equivalent to the following condition: 

For every $v \in \R^n$ and any $e \in \mathbb{S}^{n-1}$ it holds
\begin{align}
\label{eq:nondegenerate-rewritten}
\int_{\R^n} f(v+z) G(z,e) |z|^{\gamma + 2s} \d z \ge \lambda(v),
\end{align}
where $\lambda(v) \asymp (1+|v|)^{\gamma + 2s - 2}$, and 
\begin{align*}
G(z,e) = \int_{\mathbb{S}^{n-1} \cap \{\theta \perp z\} } |\theta \cdot e|^2 \d \theta =  \int_{\mathbb{S}^{n-1} \cap \{\theta \perp z\} } \cos^2(\theta,e) \d \theta = \int_{\mathbb{S}^{n-1} \cap \{\theta \perp z\} } \sin^2(z,e) \d \theta = c(n) \sin^2(z,e),
\end{align*}
where $\cos(\theta, e)$ and $\sin(\theta, e)$ denote respectively the cosine and sine of the angle between $\theta$ and $e$. 
\end{lemma}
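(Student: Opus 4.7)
\textbf{Proof proposal for \autoref{lemma:Gineq}.}

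The plan is a direct computation: rewrite the nondegeneracy integral using the homogeneous representation of $K_f$, pull out the radial part, apply the co-area type identity \eqref{eq:weighted-trafo}, and finally identify the inner angular integral with $c(n)\sin^2(z,e)$.

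First I would substitute \eqref{eq:Kf_a} into the left-hand side of \eqref{eq:nondegenerate} and pass to polar coordinates $h = \rho\theta$ with $\rho\in(0,r)$, $\theta\in\mathbb S^{n-1}$. Since $a(v;\theta)$ is homogeneous of degree $0$ in $h$, this gives, up to an $s$-independent multiplicative constant,
\begin{equation*}
\int_{B_r} K_f(v,v+h)(h\cdot e)_+^2\,dh \;\asymp\; (1-s)\Bigl(\int_0^r \rho^{1-2s}\,d\rho\Bigr)\int_{\mathbb S^{n-1}} a(v;\theta)(\theta\cdot e)_+^2\,d\theta,
\end{equation*}
and the prefactor $(1-s)\int_0^r \rho^{1-2s}\,d\rho = \tfrac{1-s}{2-2s}\,r^{2-2s} = \tfrac12 r^{2-2s}$ is uniform in $s$. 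Thus \eqref{eq:nondegenerate} is equivalent, with a new $\lambda(v)$ of the same order, to
\begin{equation*}
\int_{\mathbb S^{n-1}} a(v;\theta)(\theta\cdot e)_+^2\,d\theta \;\ge\; \lambda(v).
\end{equation*}
The symmetry $a(v;\theta)=a(v;-\theta)$, which follows from the definition of $a$ since $w\perp\theta$ iff $w\perp(-\theta)$, lets me replace $(\theta\cdot e)_+^2$ by $\tfrac12(\theta\cdot e)^2$ at the cost of a factor $2$.

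Next, expand $a(v;\theta) = \int_{w\perp\theta} f(v+w)|w|^{\gamma+2s+1}\,dw$, swap the order of integration, and apply \eqref{eq:weighted-trafo} with $g(w,\theta) = f(v+w)|w|^{\gamma+2s+1}(\theta\cdot e)^2$. Because $(\theta\cdot e)^2$ depends on $\theta$ only, the identity yields
\begin{equation*}
\int_{\mathbb S^{n-1}}\int_{w\perp\theta} f(v+w)|w|^{\gamma+2s+1}(\theta\cdot e)^2\,dw\,d\theta = \int_{\R^n} f(v+z)|z|^{\gamma+2s}\Bigl(\int_{\mathbb S^{n-1}\cap\{\theta\perp z\}} (\theta\cdot e)^2\,d\theta\Bigr)\,dz,
\end{equation*}
which is exactly \eqref{eq:nondegenerate-rewritten} with the weight $G(z,e)$ written as stated.

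Finally I would check the identity $G(z,e) = c(n)\sin^2(z,e)$. Decompose $e = \alpha\,\hat z + \beta\,e_\perp$ with $\hat z = z/|z|$, $e_\perp\in\hat z^{\perp}\cap\mathbb S^{n-1}$, $\alpha=\cos(z,e)$, $\beta=\sin(z,e)$. For $\theta\perp z$ the component $\alpha\hat z$ contributes nothing, so $\theta\cdot e = \beta\,(\theta\cdot e_\perp)$ and
\begin{equation*}
G(z,e) = \sin^2(z,e)\int_{\mathbb S^{n-1}\cap\{\theta\perp z\}} (\theta\cdot e_\perp)^2\,d\theta.
\end{equation*}
The remaining integral is taken on a unit $(n-2)$-sphere against the squared inner product with a fixed unit vector in that sphere; by rotational invariance, it equals a dimensional constant $c(n)$, independent of $z$ and $e$. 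This finishes the equivalence.

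Nothing in this argument is really hard — the only point requiring care is the bookkeeping in the application of \eqref{eq:weighted-trafo} (one has to notice that the angular factor $(\theta\cdot e)^2$ can be carried through the identity since it depends only on $\theta$) and the verification that the $(1-s)$ factor from \eqref{s-to-1-kernel} precisely cancels the $1/(2-2s)$ from the radial integral, which is what yields $s$-uniform constants and is essential for the grazing-collision limit.
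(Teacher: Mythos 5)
Your argument is correct and takes essentially the same route as the paper: reduce via \eqref{eq:Kf_a}, pass to polar coordinates so that the radial and angular parts decouple (using $a(v;\theta)=a(v;-\theta)$ to trade $(\theta\cdot e)_+^2$ for $\tfrac12(\theta\cdot e)^2$, which the paper does in the remark preceding the lemma), apply the weighted co-area identity \eqref{eq:weighted-trafo} with $g(w,\theta)=f(v+w)|w|^{\gamma+2s+1}(\theta\cdot e)^2$, and identify the inner angular integral as $c(n)\sin^2(z,e)$ by rotational invariance of the equatorial sphere $\mathbb S^{n-1}\cap\{\theta\perp z\}$. You also make explicit why the prefactor $(1-s)\int_0^r\rho^{1-2s}\,d\rho=\tfrac12 r^{2-2s}$ is $s$-uniform, which is the point the paper flags just before the lemma.
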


\begin{proof}
We apply \eqref{eq:weighted-trafo} with  $g(w,\theta) = f(v+w)|w|^{\gamma + 2s + 1} |\theta \cdot e|^2$. Then, the desired result follows from the following computation:
\begin{align*}
\int_{\mathbb{S}^{n-1}} a(v;\theta) |\theta \cdot e|^2 \d \theta &= \int_{\mathbb{S}^{n-1}} \left( \int_{w \perp \theta} f(v+w) |w|^{\gamma + 2s +1}  |\theta \cdot e|^2 \d w \right) \d \theta \\
&= \int_{\R^n} f(v+z) \left(\int_{\theta \perp z} |\theta \cdot e|^2 \d \theta \right) |z|^{\gamma + 2s} \d z \\
&= \int_{\R^n} f(v+z) G(z,e) |z|^{\gamma + 2s} \d z,
\end{align*}
together with \eqref{eq:Kf_a}-\eqref{eq:hom-nondegenerate}. 
\end{proof}

We are now in a position to prove \autoref{thm:Boltzmann-kernel-nondegenerate}:

\begin{proof}[Proof of \autoref{thm:Boltzmann-kernel-nondegenerate}]
Recall that by \eqref{eq:nondegenerate-rewritten}, it suffices to prove that for any $v \in \R^n$ and any $e \in \mathbb{S}^{n-1}$ it holds
\begin{align}
\label{eq:nondegenerate-goal}
\int_{\R^n} f(w) \sin^2(w - v,e) |w - v|^{\gamma + 2s} \d w \ge \lambda(v),
\end{align}
where $\lambda(v) \asymp (1 + |v- \bar{v}|)^{\gamma+2s-2}$ (using also that $|\bar v |\le M_1/m_0$).
Let us fix $v \in \R^n$ and $e \in \mathbb{S}^{n-1}$. Let us denote $v + \R e = \{v + t e  : t \in \R \}$. Moreover, let $R,\delta, c$, be the constants from \autoref{prop:subtract-tube}, and let $L_\delta$ denote the tube of radius $\delta$ around $v+\R e$. 

  We claim that there exists $\tilde c > 0$, depending only on $R,\delta, c$, such that 
\begin{align}
\label{eq:G-estimate-case-1}
\inf_{w \in B_R(\bar{v})\setminus L_\delta } \sin^2(w - v,e) \ge \tilde c (1 + |v - \bar{v}|)^{-2}.
\end{align}

This follows because, by the definition of $L_{\delta}$, 
\begin{align*}
\inf_{w \in B_R(\bar{v}) \setminus L_\delta } \dist(w,v + \R e) \ge \delta, 
\end{align*}
and hence, for any $w \in B_R(\bar{v}) \setminus L_\delta$:
\begin{equation}
\label{eq:Gcomp}
\sin^2(w-v,e) = \frac{\dist(w,v + \R e)^2}{|v - w|^2} \ge \frac{\delta^2}{2R^2 + 2|v - \bar{v}|^2},
\end{equation}
where we have also used that
\[
|w-v|^2\le 2 |w-\bar v|^2 + 2|v-\bar v|^2\le 2 R^2+ 2 |v-\bar v|^2. 
\]
This yields  \eqref{eq:G-estimate-case-1}. Moreover, note that for some $c' > 0$, depending only on $R,\delta$, we have
\begin{align*}
\inf_{w \in B_R(\bar{v}) \setminus L_\delta } |w - v| \ge \max \left( \delta , |v-\bar{v}| - R \right) \ge c' (1 + |v - \bar{v}|).
\end{align*}
Consequently, we deduce from \autoref{prop:subtract-tube} and the previous inequalities:
\begin{equation}
\label{eq:eq-case-1_Br}
\begin{split}
\int_{\R^n} f(w) \sin^2(w - v,e) |w - v|^{\gamma + 2s} \d w &\ge  \int_{B_R(\bar{v})\setminus L_\delta}f(w)  \sin^2(w - v,e) |w - v|^{\gamma + 2s} \d w \\
&\ge \bar c (1 + |v - \bar{v}|)^{\gamma + 2s -2} \int_{B_R(\bar{v})\setminus L_\delta} f(w) \d w\\
&\ge \bar c c  (1 + |v - \bar{v}|)^{\gamma + 2s -2}
\end{split}
\end{equation}
for some $c,\bar c > 0$, depending only on $m_0,M_0,p_0,M_q,q$, 
as desired.
\end{proof}

\subsection{Nondegeneracy under change of variables}

In this section we prove the nondegeneracy condition for $|v_0| > 2$, thereby concluding the proof of \autoref{thm:change-of-variables-nondegeneracy}.

\begin{proof}[Proof of \autoref{thm:change-of-variables-nondegeneracy}]
The proof is complete once we can show the following property: 

For any $e \in \mathbb{S}^{n-1}$ and any $v_0 \in \R^n$, and $v \in B_2$:
\begin{align*}
\int_{B_r(v)} \tilde{K}_f(v,v') |(v'-v) \cdot e|^2 \d v' \ge  r^{2-2s} \lambda\quad\text{for all}\quad r > 0. 
\end{align*}
We fix any $r > 0$. Note that we only need to consider the case $|v_0| \ge 2$ due to \autoref{thm:Boltzmann-kernel-nondegenerate}, and the definition of $\mathcal{T}_0$ in case $|v_0| < 2$. Indeed, if $|v_0|<2$ we have $\tilde K_f(v, v') =  K_f(v_0+v, v_0+v')$, and by \autoref{thm:Boltzmann-kernel-nondegenerate}
\begin{align*}
\int_{B_r(v)} \tilde{K}_f(v,v') |(v'-v) \cdot e|^2 \d v' & =   \int_{B_r(v)} {K}_f(v_0+v,v_0+v') |(v'-v) \cdot e|^2 \d v'\\ & \ge c   (1+|v_0+v|)^{\gamma+2s-2} r^{2-2s}\ge  \lambda r^{2-2s},
\end{align*}
for $|v| < 2$ and $|v_0|< 2$. Let us therefore assume $|v_0|\ge 2$. We divide the proof into three steps.

{\bf Step 1:} Using the definition of $\tilde{K}_f(v,v')$ and writing $\tilde{v} = v_0 + \tau_0 (v)$ and $\tilde{h} = \tau_0(v '- v)$, we rewrite
\begin{align*}
\int_{B_r(v)} \tilde{K}_f(v,v') & |(v'-v) \cdot e|^2 \d v' \\
&= |v_0|^{-1-\gamma-2s} \int_{B_r(v)} K_f(v_0 + \tau_0 (v), v_0 + \tau_0 (v')) |(v'-v) \cdot e|^2 \d v'\\
&= |v_0|^{-1-\gamma-2s} \int_{E_r} K_f(\tilde{v} , \tilde{v} + \tilde{h}) |\tau_0^{-1} (\tilde{h}) \cdot e|^2 |\det \tau_0|^{-1} \d \tilde{h} \\
&= |v_0|^{-\gamma-2s} \int_{E_r}  K_f(\tilde{v} , \tilde{v} + \tilde{h}) |\tau_0^{-1}( \tilde{h}) \cdot e|^2  \d \tilde{h} \\
&\asymp (1-s) |v_0|^{-\gamma-2s} \int_{E_r} |\tilde{h}|^{-n-2s} \left( \int_{\tilde{w} \perp \tilde{h} } f(\tilde{v} + \tilde{w}) |\tilde{w}|^{\gamma+2s+1} \d \tilde{w} \right) |\tau_0^{-1} (\tilde{h}) \cdot e|^2 \d \tilde{h},
\end{align*}
where in the last step we have used \eqref{eq:Kf_a}, and where $E_r = E_r(0)$ is the ellipsoid centered at the origin with side length $r/|v_0|$ in the $v_0$-direction, and side length $r$ in all directions perpendicular to $v_0$.

Next, observe that we have the following generalization of \eqref{eq:weighted-trafo} (see \cite[eq. (5.9)]{ImSi22}):
\begin{align}
\label{eq:weighted-trafo2}
\int_{E_r} \left( \int_{\R^n \cap \{ \tilde{w} \perp \tilde{h} \} } g(\tilde{w},\tilde{h}) \d \tilde{w} \right) \d \tilde{h} = \int_{\R^n} \left( \int_{E_r \cap \{\tilde{h} \perp \tilde{w} \} } g(\tilde{w},\tilde{h}) \frac{|\tilde{h}|}{|\tilde{w}|} \d \tilde{h} \right) \d \tilde{w}.
\end{align}

An application of \eqref{eq:weighted-trafo2} with $g(\tilde{w},\tilde{h}) = |\tilde{h}|^{-n-2s} f(\tilde{v} + \tilde{w}) |\tilde{w}|^{\gamma + 2s + 1} |\tau_0^{-1} (\tilde{h}) \cdot e|^2$ yields
\begin{align*}
\frac{1}{1-s} \int_{B_r(v)} \tilde{K}_f(v,v') & |(v'-v) \cdot e|^2 \d v'\\
 &\asymp |v_0|^{-\gamma-2s} \int_{\R^n} \left( \int_{E_r \cap \{\tilde{h} \perp \tilde{w} \} } |\tilde{h}|^{-n-2s} f(\tilde{v} + \tilde{w}) |\tilde{w}|^{\gamma + 2s + 1} |\tau_0^{-1} (\tilde{h}) \cdot e|^2 \frac{|\tilde{h}|}{|\tilde{w}|} \d \tilde{h} \right)  \d \tilde{w} \\
&= |v_0|^{-\gamma-2s} \int_{\R^n} f(\tilde{v} + \tilde{w}) \left( \int_{E_r \cap \{\tilde{h} \perp \tilde{w} \} } |\tilde{h}|^{-n-2s+1} |\tau_0^{-1} (\tilde{h}) \cdot e|^2 \d \tilde{h} \right) |\tilde{w}|^{\gamma + 2s} \d \tilde{w} \\
&= \int_{\R^n} f(\tilde{v} + \tilde{w}) \tilde G(\tilde{w},e) |\tilde{w}|^{\gamma + 2s} \d \tilde{w}\\
&= \int_{\R^n} f(\tilde{w}) \tilde G(\tilde{w} - \tilde{v},e) |\tilde{w} - \tilde{v}|^{\gamma + 2s} \d \tilde{w},
\end{align*}
where
\begin{align*}
\tilde G(\tilde{w} - \tilde{v},e) &:= |v_0|^{-\gamma-2s} \int_{E_r \cap \{\tilde{h} \perp (\tilde{w} - \tilde{v}) \} } |\tilde{h}|^{-n-2s+1} |\tau_0^{-1} (\tilde{h}) \cdot e|^2 \d \tilde{h}.
\end{align*}

We notice that for any two vectors $a,b \in \R^n$ it holds that $a \cdot b = \tau_0(a) \cdot \tau_0^{-1}(b)$. Indeed, let us assume without loss of generality (up to a coordinate transform) that $v_0/|v_0| = e_1$. Then,
\begin{align*}
a \cdot b = \sum_{i = 1}^n a_i b_i =  \frac{a_1}{|v_0|} (b_1 |v_0|) + \sum_{i = 2}^n a_i b_i = \tau_0(a) \cdot \tau_0^{-1}(b).
\end{align*}

Therefore, we can compute
\begin{align*}
\tilde G(\tilde{w} - \tilde{v},e) &= |v_0|^{-\gamma-2s} \int_{E_r \cap \{\tilde{h} \perp (\tilde{w} - \tilde{v}) \} } |\tilde{h}|^{-n-2s+1} |\tilde{h} \cdot \tau_0^{-1}(e)|^2 \d \tilde{h} \\
&= |v_0|^{-\gamma-2s} \int_{E_r \cap \{\tilde{h} \perp (\tilde{w} - \tilde{v}) \} } |\tilde{h}|^{-n -2s + 3} |\tau_0^{-1}(e)|^2 \cos^2(\tilde{h} , \tau_0^{-1}(e)) \d \tilde{h} \\
&= \sin^2((\tilde{w} - \tilde{v}) , \tau_0^{-1}(e)) |\tau_0^{-1}(e)|^2 |v_0|^{-\gamma-2s} \int_{E_r \cap \{\tilde{h} \perp (\tilde{w} - \tilde{v}) \} } |\tilde{h}|^{-(n-1) - 2s + 2}  \d \tilde{h}.
\end{align*}

{\bf Step 2:} Our next goal is to estimate the terms in $\tilde G(\tilde{w}-\tilde{v},e)$ separately. We will do so only for $\tilde{w} \in B_R(\bar{v})$, where $R$ is the constant from  \autoref{prop:subtract-tube}.\\
First, we claim that
\begin{align}
\label{eq:ellipse-integral}
\int_{E_r \cap \{\tilde{h} \perp (\tilde{w} - \tilde{v}) \} } |\tilde{h}|^{-(n-1)-2s+2} \d \tilde{h} \ge \frac{c}{1-s} r^{2-2s}\quad\text{for}\quad \tilde w\in B_R(\bar v). 
\end{align}
In fact, according to \cite[(5.10)]{ImSi22}, we have that the set $E_r \cap \{\tilde{h} \perp (\tilde{w} - \tilde{v}) \}$ is an $(n-1)$-dimensional ellipsoid whose smallest radius is bounded below by $\rho$,
\begin{align*}
\rho := r \left(|v_0|^2\sin^2(v_0,\tilde{w} - \tilde{v}) + \cos^2(v_0,\tilde{w}-\tilde{v}) \right)^{-\frac{1}{2}}.
\end{align*}
Hence, we obtain
\begin{align*}
\int_{E_r \cap \{\tilde{h} \perp (\tilde{w} - \tilde{v}) \} } |\tilde{h}|^{-(n-1)-2s+2} \d \tilde{h} \ge \frac{c}{1-s} \rho^{2-2s} \ge \frac{c}{1-s} r^{2-2s} \left(|v_0|^2\sin^2(v_0,\tilde{w} - \tilde{v}) + \cos^2(v_0,\tilde{w}-\tilde{v}) \right)^{s - 1}.
\end{align*}

Note that in case $|v_0| \le  10(R + |\bar{v}|+1)\le C$, the claim  \eqref{eq:ellipse-integral} follows trivially. In case $|v_0| \ge  10(R + |\bar{v}|+1)$, we argue as follows:
Since $\cos^2(v_0,\tilde{w} - \tilde{v}) \le 1$, it is enough to estimate
\begin{align*}
|v_0|^2\sin^2(v_0,v_0 - z) \le C\quad\text{for}\quad z = \tilde{w} - \tau_0 (v)\quad\text{and}\quad \tilde w\in B_R(\bar v).
\end{align*}
Observe that $z \in B_{2R}(\bar{v})$, since $|\tau_0 (v)|\le |v|\le  2\le R$. This property is now satisfied since the condition $|v_0| \ge 10(R + |\bar{v}| + 1)$ implies that 
\begin{align*}
\sin^2(v_0,v_0 - z) \le \frac{|z|^2}{|v_0 - z|^2} \le \frac{|z|^2}{(|v_0| - |z|)^2} \le \frac{(|\bar{v}| + 2R)^2}{(|v_0| - (|\bar{v}| + 2R))^2} \le C|v_0|^{-2}.
\end{align*}
This proves \eqref{eq:ellipse-integral}. 

Next, we note that
\begin{align}
\label{eq:t0-norm-compute}
|\tau_0^{-1}(e)|^2 = 1 + (|v_0|^2 - 1)\frac{|v_0 \cdot e|^2}{|v_0|^2} = 1 + (|v_0|^2 - 1)\cos^2(v_0,e).
\end{align}

Therefore, we get for any $\tilde{w} \in B_R(\bar{v})$:
\begin{align*}
\tilde G(\tilde{w} - \tilde{v},e) \ge c |v_0|^{-\gamma-2s} r^{2-2s} \sin^2((\tilde{w} - \tilde{v}) , \tau_0^{-1}(e)) [1 + (|v_0|^2 - 1)\cos^2(v_0,e)].
\end{align*}

We now combine all the aforementioned estimates. This yields:
\begin{align*}
\int_{B_r(v)} & \tilde{K}_f(v,v') |(v'-v) \cdot e|^2 \d v' \\
&= (1-s) \int_{\R^n} f(\tilde{w}) \tilde G(\tilde{w} - \tilde{v},e) |\tilde{w} - \tilde{v}|^{\gamma + 2s} \d \tilde{w}\\
&\ge c r^{2-2s} |v_0|^{-\gamma-2s}  [1 + (|v_0|^2 - 1)\cos^2(v_0,e)]   \left[\int_{B_R(\bar{v})} f(\tilde{w}) \sin^2((\tilde{w} - \tilde{v}) , \tau_0^{-1}(e)) |\tilde{w} - \tilde{v}|^{\gamma + 2s} \d \tilde{w} \right].
\end{align*}

{\bf Step 3:} In order to conclude the proof, let us first consider, as before, the case $2 \le |v_0| \le  10(R + |\bar{v}|+1)$. In this case, we apply \eqref{eq:eq-case-1_Br} with unit vector $\tau_0^{-1}(e)/|\tau_0^{-1}(e)|$ and obtain
\begin{align*}
\int_{B_r(v)} \tilde{K}_f(v,v') |(v'-v) \cdot e|^2 \d v' &\ge c r^{2-2s} \left[\int_{B_R(\bar{v})} f(\tilde{w}) \sin^2((\tilde{w} - \tilde{v}) , \tau_0^{-1}(e)) |\tilde{w} - \tilde{v}|^{\gamma + 2s} \d \tilde{w} \right] \\
&\ge c (1+|\tilde v - \bar v |)^{\gamma+2s-2} r^{2-2s} \ge c r^{2-2s}.
\end{align*}
We have also used here that $|\tilde v|+|\bar v|\le C$ when $|v_0|\le 10(R+|\bar v|+1)$.

Let us suppose now that $|v_0| \ge 10(R + |\bar{v}|+1)$. First, we observe that since $v \in B_2$, it holds $\tilde{v} \in E_2(v_0) \subset B_2(v_0)$, and therefore for any $\tilde{w} \in  B_R(\bar{v})$:
\begin{align*}
|\tilde{w} - \tilde{v}|^{\gamma + 2s} \asymp |v_0 - \bar{v}|^{\gamma + 2s} \ge c |v_0|^{\gamma + 2s},
\end{align*}
where we   used that $|v_0| \ge 10(R + |\bar{v}|+1)$.
Thus, it remains to verify the following property:
\begin{align}
\label{eq:trafo-nondegeneracy-help-0}
\left( \int_{B_R(\bar{v})} f(\tilde{w}) \sin^2((\tilde{w} - \tilde{v}) , \tau_0^{-1}(e)) \d \tilde{w} \right) [1 + (|v_0|^2 - 1)\cos^2(v_0,e)] \ge c>0.
\end{align}

Observe that, the result holds true depending on $c_0$ once $\cos^2(v_0, e) \ge c_0 > 0$ for some $c_0 >0$. Indeed, thanks to \eqref{eq:Gcomp} and \autoref{prop:subtract-tube}, we can proceed exactly as in \eqref{eq:eq-case-1_Br}, replacing again $e$ by $\tau_0^{-1}(e)/|\tau_0^{-1}(e)|$, to deduce
\[
\int_{B_R(\bar{v})} f(\tilde{w}) \sin^2((\tilde{w} - \tilde{v}) , \tau_0^{-1}(e)) \d \tilde{w}\ge \bar c (1+|\tilde v - \bar v|)^{-2}\ge c(1+|v_0|)^{-2}, 
\]
and so \eqref{eq:trafo-nondegeneracy-help-0} holds whenever $\cos^2(v_0, e)\ge c_0 > 0$.

Let us fix 
\[
c_0 = \frac{1}{10(R+|\bar v|+2)^2  },
\]
and prove that \eqref{eq:trafo-nondegeneracy-help-0} also holds in the case $\cos^2(v_0, e)\le c_0$. \\
We start by noticing that on the one hand, by the triangle inequality, and since $\tilde w\in B_R(\bar v)$ and $|\tau_0 (v)|\le 2$, 
\begin{equation}
\label{eq:combinedwith_sin2}
\begin{split}
\sin^2((\tilde{w} - \tilde{v}) , \tau_0^{-1}(e)) &= \frac{\dist(\tilde{v} + \tau_0^{-1}(e)\R, \tilde{w})^2}{ |\tilde{v} - \tilde{w}|^2} \\
&\ge c \frac{\inf_{\tau  \in \R}|v_0 + \tau_0^{-1}(e)\tau|^2-2(R+|\bar v|+2)^2}{|v_0|^2},
\end{split}
\end{equation}
where we have also used $|\tilde v - \tilde w|^2 \le C (R^2+|v_0|^2  + |\bar v|^2 + 4) \le C|v_0|^2$. On the other hand, denoting for the sake of readability $\eta^2 := \cos^2(v_0, e) = \frac{(v_0\cdot e)^2}{|v_0|^2}\le c_0$, and using \eqref{eq:t0-norm-compute} we have
\begin{align*}
\frac{\inf_{\tau  \in \R}|v_0 + \tau_0^{-1}(e)\tau|^2}{|v_0|^2} &= \sin^2(v_0, \tau_0^{-1}(e)) = 1- \frac{(v_0\cdot \tau_0^{-1}(e))^2}{|v_0|^2|\tau_0^{-1}(e)|^2} \\
&= 1- \frac{(\tau_0^{-1} (v_0)\cdot e)^2}{{|v_0|^2|\tau_0^{-1}(e)|^2}} = 1 - \frac{ (v_0\cdot e)^2}{|\tau_0^{-1}(e)|^2}\\
&= 1 - \frac{|v_0|^2\eta^2}{1-\eta^2 + \eta^2|v_0|^2} = \frac{1-\eta^2}{1-\eta^2 + \eta^2|v_0|^2}.
\end{align*}

Thus, in order to verify \eqref{eq:trafo-nondegeneracy-help-0} it is enough to check (since $B_r(\bar v)$ always contains mass, \autoref{lemma:mass-location})
\[
\left( \frac{1-\eta^2}{1+(|v_0|^2-1)\eta^2} -  2\frac{(R+|\bar v|+2)^2}{|v_0|^2}\right) [1 + (|v_0|^2 - 1)\eta^2]\ge c > 0,
\]
which holds true in particular if we can verify.
\[
 1-\eta^2 -  2\frac{(R+|\bar v|+2)^2}{|v_0|^2} - 2(R+|\bar v |+2)^2 \eta^2\ge c > 0.
\]
Since $\eta^2 \le c_0\le \frac{1}{10(R+|\bar  v|  +2)^2}$ and $|v_0|\ge 10 (R+|\bar v|+1)$, the latter inequality (and therefore also the former inequality) holds true, and the proof is complete. 
\end{proof}

\section{Proof of coercivity}
\label{sec:coercive}

In this section, we prove that the nonlocal energy induced by the Boltzmann equation is coercive and that the coercivity constants do not degenerate under the change of variables for any $v_0 \in \R^n$.

\begin{theorem}
\label{thm:change-of-variables-coercivity}
Let $s\in (0, 1)$ and $\gamma \in (-n,\gamma_0]$ for some $\gamma_0 > -n$.  Assume that $f$ is nonnegative and satisfies \eqref{eq:mass}, \eqref{eq:temperatureB}, and \eqref{eq:moment} for some $q > 2$. Then, the kernel $\tilde{K}_f$ given by \eqref{eq:Ktildedef} and \eqref{s-to-1-kernel} with $v_0\in \R^n$  satisfies the following property uniformly in $v_0$:\\
For any $g$ supported in $B_2$ it holds
\begin{align*}
\int_{B_2} \int_{\R^n} (g(v) - g(v'))^2 \tilde{K}_f(v,v') \d v \d v' \ge   \lambda [g]_{H^s(\R^n)}^2 - \Lambda \Vert g \Vert_{L^2(\R^n)}^2
\end{align*}
with constants $\lambda, \Lambda > 0$, depending only on $n,m_0,M_0,p_0,M_q, q$, and $\gamma_0$.
\end{theorem}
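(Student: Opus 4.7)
The proof mirrors the structure of \autoref{thm:change-of-variables-nondegeneracy}: the plan is to split into the cases $|v_0| \le 2$ and $|v_0| \ge 2$, treating the former directly and reducing the latter to the former via the transformation $\tau_0$. The base coercivity for $K_f$ will be obtained from the Gressmann--Strain inequality \cite{GrSt11}, whose hypotheses are supplied precisely by \autoref{prop:subtract-tube}.

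For the base case $|v_0| \le 2$, $\tilde K_f(v,v+h) = K_f(v_0+v, v_0+v+h)$ is merely a translate of $K_f$, so after the substitution $\bar g(v) = g(v - v_0)$, which is supported in $B_2(v_0)$, the statement reduces to
\[
\int_{B_2(v_0)}\int_{\R^n}(\bar g(v) - \bar g(v'))^2 K_f(v,v')\,dv'\,dv \ge \lambda[g]_{H^s(\R^n)}^2 - \Lambda\|g\|_{L^2(\R^n)}^2.
\]
Writing $K_f(v,v+h) \asymp (1-s)|h|^{-n-2s}a(v,h/|h|)$ via \eqref{eq:K-bounds} with angular part $a(v,\theta) = \int_{w\perp\theta}f(v+w)|w|^{\gamma+2s+1}\,dw$, and invoking \autoref{prop:subtract-tube} (applied by the same Carleman-type calculus identity used in \autoref{lemma:Gineq}), I would verify that $a(v,\cdot)$ is uniformly bounded below on a set of directions that is not contained in any hyperplane. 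This is precisely the hypothesis under which the Gressmann--Strain coercivity estimate produces the desired $H^s$ lower bound up to an $L^2$ error, with constants depending only on the admissible parameters.

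For $|v_0| \ge 2$, I would set $\bar g(\tilde v) = g(\tau_0^{-1}(\tilde v - v_0))$, which is supported in the ellipsoid $E_2(v_0) \subset B_2(v_0)$. Combining \eqref{eq:Ktildedef} with the Jacobian $|\det\tau_0| = |v_0|^{-1}$, the change of variables $\tilde v = v_0 + \tau_0(v)$, $\tilde h = \tau_0(h)$ transforms the bilinear form into
\[
|v_0|^{1-\gamma-2s}\int_{B_2(v_0)}\int_{\R^n}(\bar g(\tilde v) - \bar g(\tilde v + \tilde h))^2 K_f(\tilde v, \tilde v + \tilde h)\,d\tilde h\,d\tilde v,
\]
to which the base-case estimate applies at the center $v_0$ (the assumptions \eqref{eq:mass}, \eqref{eq:temperatureB}, \eqref{eq:moment} being translation invariant in $v$). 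This delivers a lower bound of the form $|v_0|^{1-\gamma-2s}(\lambda(v_0)[\bar g]_{H^s}^2 - \Lambda(v_0)\|\bar g\|_{L^2}^2)$, after which the seminorms of $\bar g$ are transferred back to those of $g$ via $\tau_0^{-1}$.

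The main obstacle is precisely this last step, which is the coercivity analog of Step 3 of the nondegeneracy proof: one must verify that the explicit prefactor $|v_0|^{1-\gamma-2s}$, the velocity weight $\langle \tilde v\rangle^{\gamma+2s}\asymp |v_0|^{\gamma+2s}$ inherited from Gressmann--Strain at the base point $v_0$, the Jacobian $|v_0|^{\pm 1}$, and the anisotropic rescaling $|\tau_0(z)|^2 = a^2 + |w|^2$ of the $H^s$-kernel under $\tau_0^{-1}$ (with $z = av_0+w$, $w\perp v_0$) combine to produce a uniform constant $\lambda > 0$ independent of $v_0$. This cancellation is expected to work because $\tau_0$ is designed to preserve the geometry of the Boltzmann equation; the $L^2$ error term transforms benignly under the same change of variables and can be absorbed into $\Lambda\|g\|_{L^2(\R^n)}^2$, completing the proof.
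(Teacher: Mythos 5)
Your overall strategy mirrors the paper's (split by $|v_0|$, apply Gressmann--Strain via \autoref{prop:subtract-tube}, transport by $\tau_0$), and the $|v_0|\le 2$ case, while glossing over the localization of the anisotropic GrSt kernel to $B_4$, is essentially correct. However, your plan for $|v_0|>2$ has a genuine gap, which you yourself flag as the ``main obstacle'' but do not resolve --- and the route you sketch would, if followed literally, fail. You propose to first obtain a bound of the form $|v_0|^{1-\gamma-2s}\big(\lambda(v_0)[\bar g]_{H^s}^2 - \Lambda(v_0)\Vert \bar g\Vert_{L^2}^2\big)$ and then transfer the $\bar g$-seminorm back to $g$ via $\tau_0^{-1}$. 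The trouble is that after applying GrSt, the kernel involves the anisotropic distance $d(\tilde v,\tilde v')$, and for $\tilde v,\tilde v'$ near $v_0$ with $\tilde v - \tilde v'$ parallel to $v_0$ one has $d(\tilde v,\tilde v')\sim|v_0|\,|\tilde v-\tilde v'|$. To extract $[\bar g]_{H^s}^2$ you must lower-bound $d^{-n-2s}$ by $|\tilde v-\tilde v'|^{-n-2s}$, which costs a factor $\sim|v_0|^{-n-2s}$, whereas the transfer $[\bar g]_{H^s}^2 \ge |v_0|^{-2}[g]_{H^s}^2$ (from $|\tau_0 z|\le|z|$ plus the Jacobian) recovers only $|v_0|^{-2}$. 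Combined with the prefactor $|v_0|^{1-\gamma-2s}$ and the GrSt weight $[(1+|\tilde v|^2)(1+|\tilde v'|^2)]^{\frac{\gamma+2s+1}{4}}\asymp|v_0|^{\gamma+2s+1}$ (note the exponent is $\gamma+2s+1$, not $\gamma+2s$ as you wrote), the net power is $|v_0|^{-n-2s}\to0$, so the constant degenerates.

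The missing ingredient is the precise compensation between $\tau_0$ and $d$: by \cite[Lemma A.1]{ImSi22}, for $v,v'$ in a fixed ball one has $c_0|v-v'|\le d\big(v_0+\tau_0(v),v_0+\tau_0(v')\big)\le c_0^{-1}|v-v'|$. The paper therefore changes variables back to $v$ \emph{before} extracting any $H^s$ seminorm; in $v$ variables the GrSt kernel becomes the standard, isotropic kernel $|v-v'|^{-n-2s}$ localized to $B_2\times B_{c_0}(v')$, and the $|v_0|$ powers cancel identically: $|v_0|^{1-\gamma-2s}\cdot|v_0|^{-2}\cdot|v_0|^{\gamma+2s+1}=1$. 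Going through $[\bar g]_{H^s}^2$ first discards exactly the anisotropy of $d$ that makes this cancellation happen --- you mention the anisotropy of $\tau_0^{-1}$ acting on the kernel, but not that $d$ itself is anisotropic in the opposite way, and it is precisely the pairing of these two that yields a uniform constant. One smaller point: the hypothesis required by the Gressmann--Strain coercivity estimate (\autoref{prop:GrSt}) is the mass-outside-a-tube condition on $f$ itself, supplied directly by \autoref{prop:subtract-tube}, not a lower bound on the angular part $a(v,\cdot)$ as you suggest; the latter is the machinery used for nondegeneracy in Section~\ref{sec:nondeg}, not here.
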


We recall that the fractional Sobolev seminorm $[\cdot]_{H^s(\R^n)}$ is given by 
\[
[g]_{H^s(\R^n)} = \frac{c_{n, s}}{2}\int_{\R^n}\int_{\R^n}\frac{|g(x) -g(y)|^2}{|x-y|^{n+2s}} \d x \d y = \int_{\R^n} |(-\Delta)^{s/2} g|^2 = \int_{\R^n}g(-\Delta)^s g,
\]
where for us, it is important to notice that $c_{n, s} \asymp (1-s)$ as $s\uparrow 1$. 

Our proof is a direct consequence of the following result, which was obtained in \cite{GrSt11}:

\begin{proposition}
\label{prop:GrSt}
Let $s\in (0, 1)$ and $\gamma \in (-n, \gamma_0]$ for some $\gamma_0 > -n$. Assume that $f$ is nonnegative and satisfies \eqref{eq:mass}, \eqref{eq:temperatureB}, and \eqref{eq:moment} for some $q > 2$. Then, for any $g$ it holds
\begin{align}
\label{eq:GrSt-estimate}
\begin{split}
\int_{\R^n} &\int_{\R^n} (g(v) - g(v'))^2 K_f(v,v') \d v \d v' \\
&\ge \lambda \int_{\R^n} \int_{\R^n} (g(v) - g(v'))^2 [(1+|v|^2)(1+|v'|^2)]^{\frac{\gamma+2s+1}{4}} \frac{1-s}{d(v,v')^{n+2s}} \1_{\{ d(v,v') \le 1 \}}(v,v') \d v \d v',
\end{split}
\end{align}
where $\lambda > 0$ depends only on $n,m_0,M_0,p_0,M_q,q$, and  $\gamma_0$, and 
\begin{align*}
d(w,w') := \sqrt{ \left|w-w' \right|^2 + \frac{1}{4} \left(|w|^2 - |w'|^2\right)^2} ~~ \forall w,w'\in \R^n.
\end{align*}
\end{proposition}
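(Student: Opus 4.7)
The plan is to obtain this coercivity as a consequence of the anisotropic coercivity for the Boltzmann quadratic form established by Gressmann and Strain in~\cite{GrSt11}, after inspecting precisely which hypotheses on $f$ are actually used in their argument. Their main estimate bounds a dissipation functional of the form $\iint (g(v)-g(v'))^2 K_f(v,v')\,\d v\,\d v'$ from below by the anisotropic norm appearing on the right-hand side of~\eqref{eq:GrSt-estimate}, under hydrodynamic assumptions on $f$. The key point is to verify that the entropy upper bound used in~\cite{GrSt11} can be replaced by our pressure lower bound~\eqref{eq:temperatureB} and moment bound~\eqref{eq:moment}.

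The first step is to unwind their argument and locate where the entropy enters. Using the expression~\eqref{eq:K-bounds} for $K_f$ in terms of integrals of $f$ over hyperplanes, the desired lower bound reduces to controlling, for each pair $(v,v')$ with $d(v,v')\le 1$, the mass of $f$ on a strip transverse to $v'-v$ passing near the midpoint. After the standard change of variables on the sphere that restores the full $(1-s)|h|^{-n-2s}$ singularity of $K_f$ (producing the factor $1-s$ on the right-hand side of~\eqref{eq:GrSt-estimate}), the estimate becomes a pointwise lower bound on this transverse mass, uniform in the direction $v'-v$, with the weight $[(1+|v|^2)(1+|v'|^2)]^{(\gamma+2s+1)/4}$ arising from the $|w|^{\gamma+2s+1}$ factor in~\eqref{eq:K-bounds} evaluated on the relevant strip.

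The second step, which is the core of the matter in our setting, is to supply this uniform transverse mass bound. In~\cite{GrSt11} the entropy hypothesis is used at this point, through a de la Vall\'ee Poussin-type absolute continuity argument, to guarantee that $f$ cannot concentrate on thin tubes. In our setting \autoref{prop:subtract-tube} furnishes precisely the same non-concentration property, using only \eqref{eq:mass}, \eqref{eq:temperatureB}, and \eqref{eq:moment}, with quantitative constants $R,\delta,c$ depending only on $m_0,M_0,p_0,M_q,q$. Substituting \autoref{prop:subtract-tube} for the entropy-based non-concentration step in~\cite{GrSt11} then produces the claim with constants depending only on $n,s,\gamma_0,m_0,M_0,p_0,M_q,q$.

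The main obstacle I expect is the careful book-keeping: one must confirm that the entropy bound in~\cite{GrSt11} enters their proof \emph{only} through a non-concentration property of the form given by \autoref{prop:subtract-tube}, and that no quantitative constant implicit in their argument secretly depends on~$H_0$. A related subtlety is matching the geometry of the transverse strips used in~\cite{GrSt11} with the tubes $L_\delta$ appearing in \autoref{prop:subtract-tube}, so that the radius $\delta$ obtained there is indeed the right scale needed to produce the pointwise lower bound on $K_f$.
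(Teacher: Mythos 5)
Your proposal takes the same overall route as the paper: reduce to the anisotropic coercivity estimate of Gressman--Strain and supply the required non-concentration via \autoref{prop:subtract-tube}. That is correct in spirit, but two points deserve attention.

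First, you overcomplicate the connection to \cite{GrSt11}. You propose to ``unwind their argument and locate where the entropy enters'' and then verify that the entropy only appears through a non-concentration property. In fact no unwinding is needed: estimate (11) in Theorem~1 of \cite{GrSt11} is already \emph{stated} under precisely the tube non-concentration hypothesis
\begin{align*}
\int_{B_R \setminus L_\delta} f(v) \d v \ge c_1 \quad\text{for every linear tube } L_\delta \text{ of radius } \delta,
\end{align*}
with a constant depending on $n, q, \gamma, \delta, R, c_1, M_0$ (not on any entropy bound). So the paper's proof simply observes that \autoref{prop:subtract-tube} furnishes this hypothesis with constants depending only on $m_0, M_0, p_0, M_q, q$, and cites \cite{GrSt11} as a black box. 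Your concern that constants might ``secretly depend on $H_0$'' is therefore moot.

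Second, you gloss over a genuine step that the paper carries out explicitly. The quadratic form bounded from below in \cite{GrSt11} is
\begin{align*}
N_f(g) = \int_{\R^n}\int_{\R^n}\int_{\mathbb S^{n-1}} (g(v)-g(v'))^2 f(v_*) B(|v-v_*|,\sigma)\d\sigma\d v_*\d v,
\end{align*}
built from the collision operator's symmetric form, not directly from the kernel $K_f$. To obtain \eqref{eq:GrSt-estimate} one must check the identity $N_f(g) = \iint (g(v)-g(v'))^2 K_f(v,v')\d v\d v'$; the paper proves this via the Carleman change of variables $(\sigma,v_*)\mapsto (v',w)$ and the definition \eqref{eq:Kf-def}, noting that $r$ and $\cos\theta$ are invariant under swapping $v \leftrightarrow v'$. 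Your paragraph about ``the standard change of variables on the sphere'' gestures at this but does not actually establish the identity.

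Finally, a minor but necessary point for the statement as written: the explicit factor $1-s$ in \eqref{eq:GrSt-estimate}, with $\lambda$ independent of $s$, requires an inspection of the proof in \cite{GrSt11} to verify that the constant there is proportional to the normalization of the kernel $B$; the paper records this in a footnote.

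So: correct idea and essentially the paper's approach, but it becomes a proof only once you (a) note that \cite{GrSt11} is already stated under the tube hypothesis, (b) carry out the Carleman-coordinate identity relating $N_f(g)$ to the $K_f$-energy, and (c) confirm the $s$-dependence.
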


\begin{proof}
In \cite[(11) in Theorem 1]{GrSt11} the authors establish the following estimate\footnote{The result in \cite{GrSt11} does not keep track of the dependence on $s$. A quick inspection of the proof, however, shows that if the kernel $B$ from \eqref{eq:Bdef} is multiplied by a constant, this applies as well to the constant from \cite[(11) in Theorem 1]{GrSt11} (in the proof, the $s$-dependence becomes apparent only in \cite[eq. (42)]{GrSt11}).}
\begin{align*}
N_f(g) \ge \lambda  \int_{\R^n} \int_{\R^n} (g(v) - g(v'))^2 [(1+|v|^2)(1+|v'|^2)]^{\frac{\gamma+2s+1}{4}} \frac{1-s}{d(v,v')^{n+2s}} \1_{\{ d(v,v') \le 1 \}}(v,v') \d v \d v'
\end{align*}
under the assumption that there exist $R > \delta > 0$ and $c_1 > 0$, such that 
\begin{align}
\label{eq:GrSt-ass}
\int_{B_R \setminus L_\delta} f(v) \d v \ge c_1,
\end{align}
where $L_\delta$ is any linear tube of radius $\delta$. Here, $N_f(g)$ is defined as follows
\begin{align*}
N_f(g) := \int_{\R^n} \int_{\R^n} \int_{\mathbb{S}^{n-1}} (g(v) - g(v'))^2 f(v_{\ast}) B(|v-v_{\ast}|,\sigma) \d \sigma \d v_{\ast} \d v,
\end{align*}
where $v'= \frac{v + v_{\ast}}{2} + \frac{|v - v_{\ast}|}{2} \sigma$ is as in \eqref{eq:v-v-prime}.

In \cite[(11) in Theorem 1]{GrSt11}, the constant $\lambda > 0$, depends only on $n, q, \gamma,\delta,R,c_1,M_0$. Since we assume that \eqref{eq:mass}, \eqref{eq:temperatureB}, and \eqref{eq:moment} for some $q > 2$ are satisfied, \eqref{eq:GrSt-ass} follows immediately by application of \autoref{prop:subtract-tube} with $c_1$ depending only on $m_0,M_0,p_0,M_q,q$.
Finally, we claim that
\begin{align}
\label{eq:Nfg-energy}
N_f(g) = \int_{\R^n} \int_{\R^n} (g(v) - g(v'))^2 K_f(v,v') \d v \d v'.
\end{align}
Clearly, once \eqref{eq:Nfg-energy} is established, the proof is complete. To prove \eqref{eq:Nfg-energy}, we rewrite $N_f(g)$ using Carleman coordinates, i.e., we set $w := v_{\ast}'$ and reparametrize the integration in $\sigma, v_{\ast}$ from the definition of $N_f(g)$ by $w,v'$ (see also \cite[Section 2.3]{ImSi20} and \cite[Lemma A.1]{Sil16}). This yields by the definition of $K_f(v,v')$ from \eqref{eq:Kf-def} and since under this transformation we have $v_{\ast} = v' + w$ and $w \perp v '- v$, and therefore $|v - v_{\ast}|^2 = |v-v' + w|^2 = |v-v'|^2 + |w|^2$:
\begin{align*}
N_f(g) &= \int_{\R^n} \int_{\R^n} (g(v) - g(v'))^2 \left( \frac{2^{n-1}}{|v'- v|} \int_{w \perp v'-v}  f(v' + w) B(r,\cos \theta) r^{-n+2} \d w \right) \d v' \d v \\
&= \int_{\R^n} \int_{\R^n} (g(v) - g(v'))^2 K_f(v',v) \d v' \d v.
\end{align*}
In the last line, we used that $\cos \theta$ and $r$ (see \eqref{eq:cos-r-def} for their definitions) remain invariant when the roles of $v$ and $v'$ are swapped. The proof of \eqref{eq:Nfg-energy} is complete.
\end{proof}

We are now in a position to give the proof of \autoref{thm:change-of-variables-coercivity}.

\begin{proof}[Proof of \autoref{thm:change-of-variables-coercivity}]
First, we assume that $|v_0| \le 2$. In that case, we have $\tilde K_f(t, x, v, v') = K_f(t_0+t, x_0+x+tv_0, v_0+v, v_0+v')$ and deduce from \autoref{prop:GrSt}  
\begin{align*}
\int_{B_2} & \int_{\R^n} (g(v) - g(v'))^2 \tilde{K}_f(v,v') \d v \d v' \\
&\ge \frac{1}{2}\int_{\R^n} \int_{\R^n} (g(v) - g(v'))^2 \tilde K_f(v,v') \d v \d v' \\
&= \frac12 \int_{\R^n} \int_{\R^n} (g(v-v_0) - g(v'-v_0))^2 K_f(v,v') \d v \d v' \\
&\ge c \int_{\R^n} \int_{\R^n} (g(v-v_0) - g(v'-v_0))^2 [(1+|v|^2)(1+|v'|^2)]^{\frac{\gamma+2s+1}{4}} \frac{1-s}{d(v,v')^{n+2s}} \1_{\{ d(v,v') \le 1 \}}(v,v') \d v \d v' \\
&\ge c (1-s) \int_{B_4} \int_{\R^n} (g(v-v_0) - g(v'-v_0))^2 |v-v'|^{-n-2s} \1_{\{ |v-v'| \le 1/6 \}}(v,v') \d v \d v'\\
&\ge c  [g]^2_{H^s(\R^n)} - c (1-s) \iint_{\{|v-v'|\ge 1/6 \}} (g(v-v_0) - g(v'-v_0))^2 |v-v'|^{-n-2s}  \d v \d v',
\end{align*}
where we used that 
\begin{align*}
d(v,v') \le |v - v'| + \frac{1}{2} |v-v'|(|v| + |v'|), 
\end{align*}
and that for $v \in B_4$ and $v' \in \R^n$ with $d(v,v') \le 1$, $|v'|\le 5$ and 
\begin{align*}
d(v,v') \le |v - v'| + \frac{1}{2} |v-v'|(|v| + |v'|)\le 6 |v-v'|. 
\end{align*}
Moreover, we also have
\begin{align*}
& \iint_{\{|v-v'|\ge 1/6\}}  (g(v-v_0) - g(v'-v_0))^2 |v-v'|^{-n-2s}   \d v \d v' \\
&\quad \le 4 \int_{\R^n} |g(v-v_0)|^2 \left( \int_{\R^n \setminus B_{1/6}(v)} |v-v'|^{-n-2s} \d v' \right) \d v \le \frac{C}{1-s} \Vert g \Vert_{L^2(B_2)}^2.
\end{align*}
Thus, by combination of the previous two estimates, we immediately deduce the desired result.

It remains to consider the case $|v_0| > 2$. We introduce the variables $\tilde{v} = v_0 + \tau_0 (v)$ and $\tilde{v}' = v_0 + \tau_0 (v')$ and define $\tilde{g}(\tilde{v}) = g(v)$. Then, we compute by transforming the integral twice and applying \autoref{prop:GrSt} to $\tilde{g}$:
\begin{align*}
\int_{B_2} & \int_{\R^n} (g(v) - g(v'))^2 \tilde{K}_f(v,v') \d v \d v' \\
& \ge \frac{1}{2}|v_0|^{-1-2s-\gamma}\int_{\R^n} \int_{\R^n} (g(v) - g(v'))^2 K_f(\tilde{v},\tilde{v}') \d v \d v' \\
&= \frac{1}{2}|v_0|^{1-2s-\gamma}\int_{\R^n} \int_{\R^n} (\tilde{g}(\tilde{v}) - \tilde{g}(\tilde{v}'))^2 K_f(\tilde{v},\tilde{v}') \d \tilde{v} \d \tilde{v}' \\
&\ge c |v_0|^{1-2s-\gamma} \int_{\R^n} \int_{\R^n} (\tilde{g}(\tilde{v}) - \tilde{g}(\tilde{v}'))^2 [(1+|\tilde{v}|^2)(1+|\tilde{v}'|^2)]^{\frac{\gamma+2s+1}{4}} \frac{1-s}{d(\tilde{v},\tilde{v}')^{n+2s}} \1_{\{ d(\tilde{v},\tilde{v}') \le 1 \}}(\tilde{v},\tilde{v}') \d \tilde{v} \d \tilde{v}' \\
&\ge c (1-s)|v_0|^{-1-2s-\gamma} \int_{B_2}\bigg( \int_{B_3} (g(v) - g(v'))^2 [(1+|v_0 + \tau_0 (v)|^2)(1+|v_0 + \tau_0 (v')|^2)]^{\frac{\gamma+2s+1}{4}} \times \\
& \qquad\qquad\qquad\qquad\qquad \times d(v_0 + \tau_0 (v), v_0 + \tau_0 (v'))^{-n-2s} \1_{\{ d(v_0 + \tau_0 (v), v_0 + \tau_0 (v')) \le 1 \}}( v, v') \d v\bigg) \d v'.
\end{align*}
Next, we make the observation $|\tau_0^{-1}((v_0 + \tau_0 (v)) - (v_0 + \tau_0 (v')))| = |v-v'|$, which implies by \cite[Lemma A.1]{ImSi22} (with $v, v'\in B_3$) that for some universal constant $c_0 \in (0, 1)$:
\begin{align*}
c_0 |v-v'| \le d(v_0 + \tau_0 (v) , v_0 + \tau_0 (v')) \le c_0^{-1} |v-v'|.
\end{align*}
Hence, we deduce
\begin{align*}
\int_{B_2} & \int_{\R^n} (g(v) - g(v'))^2 \tilde{K}_f(v,v') \d v \d v' \\
& \ge c (1-s) |v_0|^{-1-2s-\gamma}  \int_{B_2}\int_{B_{c_0}(v')} \frac{(g(v) - g(v'))^2}{|v-v'|^{n+2s}} [(1+|v_0 + \tau_0 (v)|^2)(1+|v_0 + \tau_0 (v')|^2)]^{\frac{\gamma+2s+1}{4}} \d v \d v'.
\end{align*}

Moreover, we observe that since $|v_0| \ge 2$, for any $v \in B_{2+ c_0}$  it holds that 
\begin{align}
\label{eq:v0-aux-est}
|v_0 + \tau_0 (v)| \ge c|v_0|.
\end{align}
Indeed, by the definition of $\tau_0(v)$, writing $v = a v_0 + w$ for some $w \perp v_0$ and $|a| \le (2+c_0)|v_0|^{-1} \le 3 |v_0|^{-1}$,
\begin{align*}
|v_0 + \tau_0 (v)|^2  = \left| v_0 \left( 1 + \frac{a}{|v_0|} \right) \right|^2 + |w|^2  \ge |v_0|^2 \left( 1 - \frac{3}{|v_0|^2} \right)^2 \ge \left(\frac{|v_0|}{4}\right)^2.
\end{align*}

By \eqref{eq:v0-aux-est}, we have for any $v,v' \in B_{2 + c_0}$
\begin{align*}
[(1+|v_0 + \tau_0 (v)|^2)(1+|v_0 + \tau_0 (v')|^2)]^{\frac{\gamma+2s+1}{4}}\ge c |v_0|^{1+2s+\gamma} .
\end{align*}
Altogether, we have shown that
\begin{align*}
\int_{B_2} & \int_{\R^n} (g(v) - g(v'))^2 \tilde{K}_f(v,v') \d v \d v'  \ge c  (1-s) \int_{B_2}\int_{\R^n} \frac{(g(v) - g(v'))^2}{|v-v'|^{n+2s}}   \1_{\{ |v-v'|\le c_0 \}}( v, v')  \d v \d v'.
\end{align*}
From here, the desired result follows by the same computation as in case $|v_0| \le 2$. The proof is complete.
\end{proof}

\section{Proof of the main result}
\label{sec:main}

In this section we give the proofs of our main results \autoref{thm:entropy-finite} and \autoref{cor:smoothness}.

\subsection{Global H\"older regularity estimates}

First, we establish a global weighted H\"older regularity estimate for solutions to the Boltzmann equation (see \autoref{lemma:Holder-estimate}). The proof goes by application of the H\"older regularity estimate for nonlocal kinetic equations from \cite{ImSi20b} to the Boltzmann equation. The results from the previous sections (see \autoref{thm:change-of-variables-nondegeneracy} and \autoref{thm:change-of-variables-coercivity}) guarantee the applicability of their result in our setting.

The main theorem in \cite[see Theorem 1.5]{ImSi20b} on H\"older regularity for solutions to nonlocal kinetic equations (see also \cite[Theorem 4.2]{ImSi22}) reads as follows.

\begin{proposition}[\cite{ImSi20b}]
\label{prop:Holder-estimate-kinetic}
Let $f \in L^{\infty}((-1,0] \times B_1 \times \R^n)$ be a weak solution to
\begin{align*}
\partial_t f + v \cdot \nabla_x f = \mathcal{L}_K f + h ~~ \text{ in } Q_1
\end{align*}
for some $h \in L^{\infty}(Q_1)$. Assume that $K$ is nonnegative in $(-1,0] \times B_1 \times B_2 \times \R^n$ and that the following hold true for some $0 < \lambda \le \Lambda$, $s_0 \in (0,1)$, and $s \in [s_0,1)$:
\begin{itemize}
\item[(i)] (Upper bound) For any $r > 0$ and any $v \in B_2$:
\begin{align*}
\int_{\R^n \setminus B_r} K(v,v+h) \d h + \int_{\R^n \setminus B_r} K(v+h,v) \d h \le \Lambda r^{-2s}.
\end{align*}
\item[(ii)] \label{it:nondeg}(Nondegeneracy) For any $r > 0$ and $v \in B_2$
\begin{align*}
\inf_{e \in \mathbb{S}^{n-1}} \int_{B_r} K(v,v+h) (h \cdot e)_+^2 \d h \ge \lambda r^{2-2s}> 0 ~~ \text{ if } s < \frac{1}{2}.
\end{align*}
\item[(iii)] \label{it:coerc}(Coercivity) For any $g$ supported in $B_2$:
\begin{align*}
\int_{B_2} \int_{\R^n} (g(v') - g(v))^2 K(v,v') \d v \d v' \ge \lambda [g]_{H^s(\R^n)}^2 - \Lambda \Vert g \Vert_{L^2(\R^n)}^2.
\end{align*}
\item[(iv)] (Cancellation condition) For any $r \in (0,1)$ and $v \in B_2$:
\begin{align*}
\left| \int_{B_r} (K(v,v+h) - K(v+h,v)) \d h \right| &\le \Lambda r^{-2s}, \\
\left| \int_{B_r} (K(v,v+h) - K(v+h,v)) h \d h  \right| &\le \Lambda (1 + r^{1-2s}) ~~ \text{ if } s \ge \frac{1}{2}.
\end{align*}
\end{itemize}
Then, $f$ is H\"older continuous in $Q_r$ and for any $r \in (0,1 )$ and we have
\begin{align*}
[ f ]_{C^{\alpha}_{\ell}(Q_{r/2})} \le C r^{-\alpha} \left( \Vert f \Vert_{L^{\infty}((-r^{2s},0] \times B_{r^{1+2s}} \times \R^n)} + r^{2s} \Vert h \Vert_{L^{\infty}(Q_{r})} \right)
\end{align*}
for some $C > 0$ and $\alpha \in (0,1)$ depending only on $n, s_0, \Lambda,\lambda$.
\end{proposition}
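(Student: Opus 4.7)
The plan is to follow the De~Giorgi--Nash--Moser strategy adapted to the nonlocal kinetic setting. By kinetic scaling compatible with $Q_r$, it suffices to prove the statement at scale $r=1$: there exist $\theta \in (0,1)$ and $\mu > 0$, depending only on $n,s_0,\lambda,\Lambda$, such that $\osc_{Q_\theta} f \le (1-\mu)\osc_{Q_1} f + C\Vert h\Vert_{L^\infty(Q_1)}$. Iterating on a geometric sequence of cylinders then produces the claimed $C^\alpha_\ell$ bound, and the oscillation decay reduces to a one-sided improvement lemma: \emph{if $f \le 1$ in $Q_1$ and $|\{f \le 0\}\cap Q_1^-|\ge \nu_0$ on some past cylinder, then $f \le 1-\mu$ on a future cylinder $Q_\theta^+$}.

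The first building block is a Caccioppoli/energy estimate, obtained by testing the equation against $\phi^2 (f-k)_+$ for a cutoff $\phi$: condition (iii) provides a coercive term $\lambda \int\phi^2[(f-k)_+]_{H^s(\R^n)}^2\d t\,\d x$, while (i) controls the commutator between $\phi$ and $\mathcal{L}_K$ and (iv) absorbs the antisymmetric part of $K$ in case $s\ge\tfrac12$. The transport $\partial_t + v\cdot\nabla_x$ contributes boundary terms but no spatial derivatives. A \emph{kinetic gain of integrability} (velocity averaging combined with the $H^s$ regularity in $v$ coming from the energy estimate) then upgrades the resulting $L^\infty_t L^2_{x,v}\cap L^2_{t,x}H^s_v$ control to an $L^p_\ell$ bound for some $p>2$. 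Iterating this on a shrinking sequence of level sets $k_j\nearrow K$ yields the \emph{first De~Giorgi lemma}: if $\Vert f_+\Vert_{L^2(Q_1)}$ and $\Vert h\Vert_{L^\infty}$ are small enough, then $f_+\le \tfrac12$ in $Q_{1/2}$.

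The central difficulty, and the step I expect to be the main obstacle, is the \emph{second (intermediate value) De~Giorgi lemma}: if $f\le 1$ in $Q_1$, $|\{f\le 0\}\cap Q_1^-|\ge \nu_0$, and $|\{f\ge 1-\delta\}\cap Q_1^+|>0$, then $|\{0<f<1-\delta\}\cap Q_1|\ge \beta(\delta)>0$. In the local parabolic setting this would follow from an isoperimetric inequality; here one must instead propagate information from the past negative set to future points by exploiting the nonlocal operator itself. This is precisely where the quadratic nondegeneracy (ii) in \emph{every} direction $e$ enters: it guarantees that $\mathcal{L}_K f$ at a contact point picks up a definite contribution from any sufficiently large subset not concentrated on a hyperplane, yielding a barrier-type pointwise lower bound that forces $f$ to transition through intermediate values between $\{f\le 0\}$ and $\{f\ge 1-\delta\}$. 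Condition (iv) is needed to ensure the drift-like contribution from the antisymmetric part of $K$ does not dominate this gain.

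Combining the two lemmas in the standard De~Giorgi way gives the one-sided improvement: starting from $\Vert f\Vert_{L^\infty(Q_1)}\le 1$ and a definite measure of negativity in $Q_1^-$, the intermediate value lemma prevents $f$ from staying close to $1$ on a large set in $Q_\theta^+$, and the first lemma then concludes $f\le 1-\mu$. Because the coercivity constant in (iii) already absorbs the factor $c_{n,s}\asymp 1-s$ into $\lambda$ (by assumption it is a fixed positive constant), all quantitative constants in this argument remain bounded uniformly for $s\in[s_0,1)$, yielding the stated $s_0$-uniform exponent $\alpha$ and constant $C$.
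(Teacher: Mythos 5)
The paper does not re\-prove this proposition; its ``proof'' is a citation to \cite[Theorem 1.5]{ImSi20b} (and \cite[Theorem 4.2]{ImSi22}), together with three short remarks: (a) the symmetric coercivity hypothesis (iii) here differs cosmetically from the one in \cite{ImSi20b}, which is stated for the non-symmetric quantity $\int\int (g(v')-g(v))g(v')K(v,v')\d v\d v'$ --- and the two are equivalent precisely because of the first cancellation condition in (iv); (b) the statement for general $r$ follows by kinetic scaling from $r=1$; and (c) the constants in \cite{ImSi20b} are robust as $s\to 1$, as noted in \cite[Section~1.2.2]{ImSi22}. You have instead sketched a from-scratch De~Giorgi--Nash--Moser proof of the cited theorem, so what you wrote is not a proof of ``the paper's statement'' in the sense the paper intends, but a reconstruction of the internal argument of the cited reference. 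You do touch (b) and (c), but you miss (a) entirely, and (a) is the one point the authors genuinely had to check in order to invoke \cite{ImSi20b} under hypothesis (iii) as stated.

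As for the reconstruction itself: the ingredients you list (Caccioppoli via $\phi^2(f-k)_+$ using (iii) for coercivity, (i)/(iv) for the commutator and antisymmetric part, kinetic gain of integrability via velocity averaging, and a nondegeneracy-driven barrier) are indeed the ones that drive \cite{ImSi20b}. However, the architecture you describe is the classical De~Giorgi three-lemma scheme with an \emph{intermediate value (isoperimetric) lemma} as the central step, whereas \cite{ImSi20b} proves the weak Harnack inequality through a \emph{barrier/expansion-of-positivity lemma} combined with a \emph{growing ink-spots covering argument} (in the Krylov--Safonov tradition adapted to the kinetic geometry, as in the earlier kinetic Fokker--Planck work). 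Your own description of the ``intermediate value lemma'' --- a pointwise lower bound at a contact point produced by the cone nondegeneracy (ii) --- is actually the barrier/expansion-of-positivity mechanism, not an isoperimetric estimate, so you are implicitly using the \cite{ImSi20b} route while labelling it as De~Giorgi's second lemma. Be aware that an isoperimetric lemma in the kinetic nonlocal setting would not come for free; the nonlocality bypasses the need for it precisely because $\mathcal{L}_K$ ``sees'' the far-away negative set directly, which is what the expansion-of-positivity argument exploits. Finally, your remark on uniformity as $s\to 1$ is correct in spirit (it comes down to the normalization $c_{n,s}\asymp 1-s$ being already built into $\lambda,\Lambda$), but it needs to be tracked through the averaging/gain-of-integrability step, not just the energy estimate; the paper sidesteps this entirely by citing \cite[Section~1.2.2]{ImSi22}.
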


We refer to \cite[Definition 5.7]{ImSi20b} for a definition of weak solutions, and observe that we will apply \autoref{prop:Holder-estimate-kinetic} only for strong solutions to the Boltzmann equation as in \autoref{def:solution}, which are always weak solutions.

\begin{proof}
The result is proved in \cite[see Theorem 1.5]{ImSi20b} (see also \cite[Theorem 4.2]{ImSi22}) for $r = 1$, however with condition (iii) replaced by the assumption that for any $g$ supported in $B_2$ it holds
\begin{align}
\label{eq:coercivity-nonsym}
\int_{B_2} \int_{\R^n} (g(v') - g(v))g(v') K(v,v') \d v \d v' \ge \lambda [g]_{H^s(\R^n)}^2 - \Lambda \Vert g \Vert_{L^2(\R^n)}^2.
\end{align}
Note that under the first cancellation condition in assumption (iv), \eqref{eq:coercivity-nonsym} is equivalent to (iii), as was mentioned in \cite[Proof of Theorem 5.2]{ImSi22}. The result for general $r$ follows immediately by scaling. The proof in \cite{ImSi20b} is robust as $s \to 1$, as was pointed out in \cite[Section 1.2.2]{ImSi22}.
\end{proof}

In the previous sections (see \autoref{thm:change-of-variables-nondegeneracy} and \autoref{thm:change-of-variables-coercivity}), we have seen that the Boltzmann kernel $\tilde K_f$ is still nondegenerate and coercive in our setting. In particular, it satisfies (ii)  and   (iii).

The following lemma was proved in \cite[Theorem 5.2]{ImSi22} and verifies the assumptions (i) and (iv) for the transformed Boltzmann kernel $\tilde{K}_f$ under the macroscopic assumptions \eqref{eq:mass} and \eqref{eq:energy}. It becomes immediately apparent from the proof, that the result is robust as $s \to 1$, and that it remains true for $\gamma + 2s \in [0,q]$ under the assumption \eqref{eq:moment} for $q \ge 2$.

\begin{lemma}[\cite{ImSi22}]
\label{lemma:change-of-variables-1}
Let $q \ge 2$, $s_0 \in (0,1)$, $s \in [s_0,1)$. Let $\gamma \ge 0$ and $\gamma + 2s \in [0,q]$.  Assume that $f$ is nonnegative and satisfies \eqref{eq:mass} and \eqref{eq:moment} for $q \ge 2$.
Then, the kernel $\tilde{K}_f$ given by \eqref{eq:Ktildedef} and \eqref{s-to-1-kernel} with $v_0\in \R^n$  satisfies (i) and (iv) in \autoref{prop:Holder-estimate-kinetic} uniformly in $v_0$, with constants depending only on $n,s_0,m_0,M_0,M_q,\gamma$.
\end{lemma}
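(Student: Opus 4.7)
The plan is to inspect the proof of Theorem 5.2 in \cite{ImSi22} and verify that the only properties of $f$ that enter the verification of (i) and (iv) are the mass bound \eqref{eq:mass} together with control on $\int f(w) |w|^{\gamma+2s} \d w$, and that every $s$-singular factor is absorbed by the $(1-s)$ prefactor in \eqref{s-to-1-kernel}. Under $\gamma + 2s \le q$, this moment is controlled by $M_0$ and $M_q$ via interpolation, so no use is made of \eqref{eq:energy} or of the entropy. The case $|v_0| < 2$ is trivial because $\mathcal{T}_0$ is then a translation and $\tilde K_f(v,v+h) = K_f(v_0+v, v_0+v+h)$; the heart of the matter is $|v_0| \ge 2$, where one must check that the prefactor $|v_0|^{-1-\gamma-2s}$ in \eqref{eq:Ktildedef} and the elongated ellipsoid $E_r(v_0)$ conspire to produce $v_0$-uniform bounds.

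For the upper bound (i), the starting point is the pointwise estimate \eqref{eq:K-bounds} on $K_f$ together with \eqref{eq:Ktildedef}. After the substitution $h = \tau_0^{-1}(\tilde h)$ and the weighted coarea identity \eqref{eq:weighted-trafo2} (applied exactly as in Step~1 of the proof of \autoref{thm:change-of-variables-nondegeneracy}), the integral $\int_{\R^n \setminus B_r} \tilde K_f(v,v+h) \d h$ takes the form
\[
 |v_0|^{-\gamma-2s} \int_{\R^n} f(\tilde v + w)\, |w|^{\gamma+2s}\, \tilde H(w;v_0,r) \d w,
\]
for an angular weight $\tilde H$ coming from integrating $|\tilde h|^{-n-2s+1}$ over $(\R^n \setminus E_r(v_0)) \cap \{\tilde h \perp w\}$. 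A direct computation of $\tilde H$ shows $|v_0|^{-\gamma-2s}\tilde H \le C r^{-2s}$ uniformly in $v_0$, and the remaining moment integral is bounded by $C(M_0, M_q)$ under $\gamma + 2s \le q$. The estimate for $\int_{\R^n \setminus B_r} \tilde K_f(v+h,v) \d h$ is analogous after one further change of variables.

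For the cancellation (iv), the key algebraic identity is
\[
K_f(v,v+h) - K_f(v+h,v) = \frac{2^{n-1}}{|h|} \int_{w \perp h} \bigl[ f(v+w) - f(v+h+w) \bigr] B(r,\cos\theta)\, r^{-n+2} \d w,
\]
which holds because $r$ and $\cos\theta$ from \eqref{eq:cos-r-def} are symmetric under the swap $v \leftrightarrow v+h$. The weight $B(r,\cos\theta)\, r^{-n+2}$ is even in $h$ for fixed $w \perp h$, so a first-order Taylor expansion of $f(v+w) - f(v+h+w)$ and antisymmetric integration over $h \in B_r$ gain one power of $|h|$ and deliver the $r^{-2s}$ bound; a similar second-order argument yields the $r^{1-2s}$ bound (multiplying the integrand by $h$ already provides one extra power, so only a first-order expansion is required). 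Pushing these estimates through the linear map $\tau_0$ preserves the cancellation, and the $v_0$-tracking is identical to that in (i). The $s \to 1$ robustness is automatic: every $s$-singular factor $1/(k-2s)$ produced by an integral $\int_0^r |\tilde h|^{-1-2s+k} \d \tilde h$ with $k \ge 1$ is exactly compensated by the $(1-s)$ in \eqref{s-to-1-kernel}. The main (purely bookkeeping) obstacle — and the reason to cite \cite{ImSi22} rather than reproduce the proof — is the uniform control of the angular weight $\tilde H$ in $v_0$, which requires splitting into the regimes $|v_0| \le C(R,\bar v)$ and $|v_0|$ large exactly as in the proof of \autoref{thm:change-of-variables-nondegeneracy}.
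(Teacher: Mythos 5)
The paper does not actually write out a proof of this lemma: it simply cites \cite[Theorem 5.2]{ImSi22} and asserts that inspecting that proof yields robustness in $s$ (via the normalization \eqref{s-to-1-kernel}) and validity for $\gamma+2s\in[0,q]$ under \eqref{eq:moment}. Your proposal also defers to \cite{ImSi22} but, in fleshing out the mechanism, introduces two problems, one of which is serious.

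For (i), your bookkeeping is off: you cannot simultaneously claim $|v_0|^{-\gamma-2s}\tilde H \le C r^{-2s}$ \emph{and} that the residual integral $\int f(\tilde v+w)\,|w|^{\gamma+2s}\,\d w$ is bounded by $C(M_0,M_q)$, because the latter grows like $(1+|v_0|)^{\gamma+2s}$ when $|v_0|$ is large (the mass of $f$ sits near the origin while $\tilde v\sim v_0$). The $|v_0|^{-\gamma-2s}$ factor must be reserved precisely to compensate this growth in the moment integral, not absorbed into $\tilde H$. The overall scheme is still right once this is fixed.

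For (iv), the argument is wrong, not merely imprecise. Your Carleman identity for $K_f(v,v+h)-K_f(v+h,v)$ is correct (and the weight $B(r,\cos\theta)r^{-n+2}$ is indeed even in $h$ for fixed $w\perp h$), but a first-order Taylor expansion of $f(v+w)-f(v+h+w)$ produces $\nabla_v f$, and antisymmetrization over $h\in B_r$ upgrades this to a second difference producing $D^2_v f$. Neither $\|\nabla_v f\|$ nor $\|D^2_v f\|$ is controlled by \eqref{eq:mass} and \eqref{eq:moment} --- yet the constants in the lemma depend only on $n,s_0,m_0,M_0,M_q,\gamma$. (The qualitative integrability $\int|D^2_v f|(1+|v|)^{\gamma+2s}\,\d v<\infty$ from \autoref{def:solution} is just a requirement on the notion of solution; it is not a quantitative bound one may invoke here.) The cancellation estimate in \cite[Theorem 5.2]{ImSi22} (and before that in \cite{Sil16}) is obtained by exploiting the exact algebraic structure of the Carleman kernel through Fubini and changes of variables, so that the final bound involves only $\int f\,(1+|w|)^{\gamma+2s}\,\d w$; no pointwise regularity of $f$ enters. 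Your Taylor-expansion route would yield constants depending on derivative norms of $f$, which is exactly what one cannot afford in this conditional-regularity program, and hence the step as written does not prove the lemma.
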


By combination of \autoref{lemma:change-of-variables-1}, \autoref{thm:change-of-variables-nondegeneracy}, and \autoref{thm:change-of-variables-coercivity} we are able to apply the previous H\"older estimate, \autoref{prop:Holder-estimate-kinetic}, to the Boltzmann equation in any bounded domain, to obtain the ellipticity conditions in \autoref{prop:properties_kinetic_kernels} uniform as $s\uparrow 1$: 
\begin{lemma}
\label{lem:properties_kinetic_kernels}
Let $s_0\in (0, 1)$, and let $s\in [s_0, 1)$. Let $f$ be nonnegative and satisfying \eqref{eq:mass}, \eqref{eq:temperatureB}, and \eqref{eq:moment} for some $ q > 2$. Then, the Boltzmann kernel $K = \tilde K_f$ given by \eqref{eq:Ktildedef} and \eqref{s-to-1-kernel} with $v_0\in \R^n$ satisfies (i)-(ii)-(iii)-(iv) from \autoref{prop:properties_kinetic_kernels} uniformly in $v_0$, for some constants $\lambda$ and $\Lambda$ depending only on $n$, $s_0$, $\gamma$, $m_0$, $M_0$, $p_0$, $M_q$,and $q$. 
\end{lemma}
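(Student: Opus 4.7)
The plan is to observe that this lemma is essentially an assembly result: each of the four conditions (i)--(iv) appearing in \autoref{prop:properties_kinetic_kernels} has already been verified individually in the earlier sections, and what remains is to check that the hypotheses of those four results match our current assumptions and that all constants admit a uniform bound in $s \in [s_0,1)$.

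First, I would invoke \autoref{lemma:change-of-variables-1} to obtain conditions (i) (upper bound) and (iv) (cancellation). That lemma requires only the mass bound \eqref{eq:mass} and the moment bound \eqref{eq:moment} with $q \ge 2$, both of which hold by assumption; moreover, its conclusion is stated uniformly for $s \in [s_0,1)$ with constants depending only on $n, s_0, m_0, M_0, M_q, \gamma$, so this part of the proof is immediate.

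Second, for condition (ii) (nondegeneracy), I would apply \autoref{thm:change-of-variables-nondegeneracy}, whose hypotheses are exactly \eqref{eq:mass}, \eqref{eq:temperatureB}, and \eqref{eq:moment} for some $q > 2$, and whose constant $\lambda$ depends only on $n, m_0, M_0, p_0, M_q, q, \gamma_0$ (where $\gamma_0$ may be chosen once $\gamma$ is fixed). Since the dependence on $s$ was tracked explicitly through the normalization \eqref{s-to-1-kernel} and the identity $(1-s)\int_0^r |h|^{1-2s}\,dh = r^{2-2s}$, the resulting bound is uniform in $s \in [s_0,1)$. Similarly, for condition (iii) (coercivity), I would invoke \autoref{thm:change-of-variables-coercivity}, whose hypotheses and conclusions match ours under the same assumptions and whose constants again depend only on the list $n, m_0, M_0, p_0, M_q, q, \gamma_0$, with the $s$-uniformity coming from the Gressman--Strain estimate in \autoref{prop:GrSt}.

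There is no substantive obstacle here — the only point that deserves a moment of care is confirming that the parameter range $\gamma + 2s \in [0,q]$ assumed by \autoref{lemma:change-of-variables-1} is compatible with the stated hypotheses $s \in [s_0,1)$, $\gamma \ge 0$, and $q > 2$ (which is where the assumption $\gamma + 2s \le q$ of the main theorems enters implicitly), and that all four results quoted above are stated with constants independent of $v_0$, so that joining them produces the uniform bound in $v_0$ claimed in the lemma. Gathering the four estimates then yields \autoref{lem:properties_kinetic_kernels} with $\lambda, \Lambda$ depending only on $n, s_0, \gamma, m_0, M_0, p_0, M_q, q$, as required.
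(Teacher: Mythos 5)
Your proposal is correct and takes exactly the same route as the paper: the paper's proof is a one-line citation to \autoref{lemma:change-of-variables-1}, \autoref{thm:change-of-variables-nondegeneracy}, and \autoref{thm:change-of-variables-coercivity}, which is precisely the assembly you carry out. Your additional remarks on $s$-uniformity and $v_0$-uniformity are accurate elaborations of why these citations suffice.
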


We directly prove \autoref{lem:properties_kinetic_kernels}, which in turn implies \autoref{prop:properties_kinetic_kernels} as well. 

\begin{proof}[Proof of \autoref{prop:properties_kinetic_kernels} and \autoref{lem:properties_kinetic_kernels}]
Follows from \autoref{lemma:change-of-variables-1}, \autoref{thm:change-of-variables-nondegeneracy}, and \autoref{thm:change-of-variables-coercivity}. 
\end{proof}

 In order to obtain a global H\"older estimate, we make use of the changes of variables from Section~\ref{sec:prelim}.

Before we apply \autoref{prop:Holder-estimate-kinetic}, we need the following auxiliary lemma. This lemma was already proved in \cite[Lemma 6.3]{ImSi22} for the range $p > n + \gamma + 2s$. We need the result for small values of $p$ as well.

\begin{lemma}
\label{lemma:improved-Lemma-6.3}
Let $q \ge 2$, $\gamma > -n$, $s\in (0, 1)$, $\gamma + 2s \in [0,q]$, and $v_0\in \R^n$. Assume that $f$ is nonnegative and satisfies \eqref{eq:mass} and \eqref{eq:moment} with $q \ge 2$. Let $f \in C^0_{\ell,p}$ for some $p \in [0,n-1)\cup (n+\gamma+2s, +\infty)$. Then, for any $v \in B_1(v_0)$:
\begin{align*}
\int_{M} f(v+h) K_f(v,v+h) \d h \le C (1 + |v_0|)^{-p + \gamma} \Vert f \Vert_{C^0_{\ell,p}((0,T) \times \R^n \times \R^n)},
\end{align*}
where we denote $M = \{ h \in \R^n : |v+h| < |v_0|/8, |h| > 1/2 + |v_0|/8 \}$,  and $C > 0$ depends only on $n,p,M_0,M_q,q$.
\end{lemma}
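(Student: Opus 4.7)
My plan is to start from the pointwise upper bound \eqref{eq:K-bounds},
\begin{equation*}
K_f(v,v+h) \lesssim |h|^{-n-2s} \int_{w\perp h} f(v+w)\,|w|^{\gamma+2s+1}\,\d w,
\end{equation*}
and to treat the two ranges of $p$ with complementary arguments; throughout I may assume $|v_0|$ large, since for $|v_0|$ bounded the statement is immediate. A preliminary geometric fact is that on $M$ one has $|h|\sim |v_0|$: indeed $|h|\ge 1/2+|v_0|/8$ by definition, while the triangle inequality together with $|v+h|<|v_0|/8$ and $|v|\le |v_0|+1$ gives $|h|\le 2|v_0|$. Moreover, writing $v=(v\cdot\hat h)\hat h+v_{\perp h}$ with $v_{\perp h}\perp h$, the constraint $|v+h|<|v_0|/8$ forces $|v\cdot\hat h|\sim |v_0|$ and $|v_{\perp h}|\lesssim |v_0|/8$.

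\textbf{Case $p>n+\gamma+2s$.} Here I avoid swapping integrations. For $w\perp h$ the identity $|v+w|^2=(v\cdot\hat h)^2+|v_{\perp h}+w|^2 \gtrsim |v_0|^2+|w|^2$ gives $f(v+w)\le \|f\|_{C^0_{\ell,p}}(|v_0|+|w|)^{-p}$. Passing to polar coordinates on the hyperplane $h^\perp$ and rescaling $r=|v_0|s$, the condition $p>n+\gamma+2s$ ensures the resulting radial integral converges, producing
\begin{equation*}
\int_{w\perp h} f(v+w)|w|^{\gamma+2s+1}\,\d w \lesssim \|f\|_{C^0_{\ell,p}}(1+|v_0|)^{n+\gamma+2s-p}.
\end{equation*}
Multiplying by $|h|^{-n-2s}\lesssim (1+|v_0|)^{-n-2s}$ and integrating over $M$ using $\int_M f(v+h)\,\d h\le M_0$ yields the claimed $(1+|v_0|)^{\gamma-p}$ decay.

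\textbf{Case $p\in[0,n-1)$.} The hyperplane integral above now diverges, so instead I swap integrations by means of the weighted co-area identity \eqref{eq:weighted-trafo}, applied after writing $h=\rho\theta$ in polar coordinates. This produces
\begin{equation*}
I \lesssim \int_{\R^n} f(v+z)\,|z|^{\gamma+2s}\,J(z)\,\d z, \qquad J(z):=\int_{h\in z^\perp\cap M} f(v+h)\,|h|^{1-n-2s}\,\d h.
\end{equation*}
A key geometric observation is that $J(z)$ vanishes unless the affine hyperplane $v+z^\perp$ meets $B_{|v_0|/8}$, namely unless $|v\cdot\hat z|\le |v_0|/8$, in which case the slice $(v+z^\perp)\cap B_{|v_0|/8}$ is an $(n-1)$-dimensional disk. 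Substituting $h'=v+h$, using $|h|\sim |v_0|$, and invoking $f(h')\le \|f\|_{C^0_{\ell,p}}(1+|h'|)^{-p}$, I reduce $J(z)$ to a slice integral $\int_{(v+z^\perp)\cap B_{|v_0|/8}}(1+|h'|)^{-p}\,\d h'$; a direct $(n-1)$-dimensional polar computation bounds this by $C(1+|v_0|)^{n-1-p}$, which is finite precisely because $p<n-1$. This gives $J(z)\lesssim \|f\|_{C^0_{\ell,p}}(1+|v_0|)^{-p-2s}$. Combining with $\int f(v+z)|z|^{\gamma+2s}\,\d z\lesssim (1+|v|)^{\gamma+2s}\lesssim (1+|v_0|)^{\gamma+2s}$, which follows from $\gamma+2s\le q$ and \eqref{eq:moment}, yields $I\lesssim \|f\|_{C^0_{\ell,p}}(1+|v_0|)^{\gamma-p}$.

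The main technical point is identifying the correct application of \eqref{eq:weighted-trafo} in the small-$p$ case: the swap places $f$ inside an $n$-dimensional moment integral (controlled via $M_q$), while the kernel structure is carried by an $(n-1)$-dimensional slice integral over $(1+|\cdot|)^{-p}$, which is finite of order $|v_0|^{n-1-p}$ precisely when $p<n-1$. This complementary role of the exponent is what explains the dichotomy in the allowed range $p\in[0,n-1)\cup(n+\gamma+2s,\infty)$.
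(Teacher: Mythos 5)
Your argument is correct, and for the genuinely new range $p\in[0,n-1)$ it is exactly the paper's proof: after the Carleman swap (the paper invokes the analogue of \eqref{eq:weighted-trafo2} restricted to $M$, you derive the same identity from \eqref{eq:weighted-trafo} after passing to polar coordinates in $h$), the pointwise decay $(1+|\cdot|)^{-p}$ is spent on the $(n-1)$-dimensional slice of $B_{|v_0|/8}$ -- of size $\lesssim(1+|v_0|)^{n-1-p}$, which is where $p<n-1$ enters -- and the weight $|w|^{\gamma+2s}$ is handled by the $n$-dimensional moment, using $\gamma+2s\le q$ together with \eqref{eq:mass}, \eqref{eq:moment}. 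The reduction via $|h|\gtrsim 1+|v_0|$ on $M$ and the vanishing of $J(z)$ when the affine hyperplane misses $B_{|v_0|/8}$ is also exactly what the paper does, so this case is the same route. One small phrasing slip: the slice integral is always finite since the domain is bounded; what fails for $p\ge n-1$ is not finiteness but the scaling bound $\lesssim(1+|v_0|)^{n-1-p}$, which would then have to be replaced by $\lesssim 1$ (or a logarithm), losing the claimed decay.

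For $p>n+\gamma+2s$ the paper simply cites \cite[Lemma 6.3]{ImSi22}, whereas you give a self-contained argument without any Fubini swap: the geometric bound $|v+w|^2\gtrsim|v_0|^2+|w|^2$ for $w\perp h$, $h\in M$ (valid for $|v_0|$ large, which you correctly reduce to) turns the hyperplane integral into a convergent $(n-1)$-dimensional moment of $(|v_0|+|w|)^{-p}$, yielding a pointwise bound $K_f\lesssim\|f\|_{C^0_{\ell,p}}(1+|v_0|)^{\gamma-p}$ that is then integrated against $f(v+h)$ using only the mass bound $M_0$. This is a clean and explicit replacement for the citation, and it makes transparent why the two regimes of $p$ demand opposite decompositions (swap vs.\ no swap), which the paper leaves implicit.
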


\begin{proof}
The case $p > n+\gamma+2s$ corresponds to \cite[Lemma 6.3]{ImSi22} with $g = f$. Let us therefore assume $p\in [0, n-1)$. Using \eqref{eq:K-bounds}, as well as the transformation \eqref{eq:weighted-trafo2}, we obtain
\begin{align*}
\int_{M} f(v+h) K_f(v,v+h) \d h &\asymp (1-s) \int_{M} f(v+h) |h|^{-n-2s} \left( \int_{\R^n \cap \{w \perp h\} } f(v+w) |w|^{\gamma + 2s + 1} \d w \right) \d h \\
&= (1-s) \int_{\R^n} f(v+w) |w|^{\gamma + 2s} \left(\int_{M \cap \{ w \perp h \}} f(v+h) |h|^{-n-2s+1} \d h \right) \d w.
\end{align*}
Next, we compute for the inner integral, given any $w \in \R^n$:
\begin{align*}
&\hspace{-1cm}\left(\int_{M \cap \{ w \perp h \}} f(v+h) |h|^{-n-2s+1} \d h \right) \\
&\le \Vert f \Vert_{C^0_{\ell,p}((0,T) \times \R^n \times \R^n)} \left(\int_{M \cap \{ w \perp h \}} |h|^{-(n-1)-2s} (1+ |v+h|)^{-p} \d h\right)\\
&\le C (1 + |v_0|)^{-(n-1)-2s} \left( \int_{\{ |v + h| < |v_0|/8 \} \cap \{ w \perp h \} }  (1 + |v+h|)^{-p} \d h\right) \Vert f \Vert_{C^0_{\ell,p}((0,T) \times \R^n \times \R^n)} \\
&\le C (1 + |v_0|)^{-p -2s} \Vert f \Vert_{C^0_{\ell,p}((0,T) \times \R^n \times \R^n)}.
\end{align*}
Moreover, for the outer integral we have
\begin{align*}
\int_{\R^n} f(v+w) |w|^{\gamma + 2s} \d w &\le C\int_{\R^n} f(w) \left(|w|^{\gamma + 2s} + |v|^{\gamma + 2s} \right) \d w \\
&\le C \left(M_q + (1 + |v_0|)^{\gamma + 2s} M_0 \right),
\end{align*}
where we used that $0 \le \gamma + 2s \le q$ and $|v| \le 1 + |v_0|$.
Therefore,
\begin{align*}
\int_{M} f(v+h) K_f(v,v+h) \d h &\le C (1 + |v_0|)^{-p -2s} \Vert f \Vert_{C^0_{\ell,p}((0,T) \times \R^n \times \R^n)} \left(\int_{\R^n} f(v+w) |w|^{\gamma + 2s} \d w \right) \\
&\le  C (1 + |v_0|)^{-p + \gamma} \Vert f \Vert_{C^0_{\ell,p}((0,T) \times \R^n \times \R^n)},
\end{align*}
as desired.
\end{proof}

Altogether, we obtain a global H\"older estimate. 

\begin{lemma}
\label{lemma:Holder-estimate}
Let $q > 2$, $s_0 \in (0,1)$, $s \in [s_0,1)$. Let $\gamma \ge 0$ and $\gamma + 2s \in [0,q]$ and $T > 0$. Let $f$ be a solution to the Boltzmann equation in $(0, T)\times \R^n\times \R^n$ (see \autoref{def:solution}) satisfying \eqref{eq:mass}, \eqref{eq:temperatureB}, and \eqref{eq:moment} with $q > 2$. Then, there exists $\alpha_0 > 0$ depending only on $n,s_0,m_0,M_0,p_0$, and $ M_q$, such that for all $\alpha\in (0, \alpha_0)$ and $p\in (\alpha, n-1)\cup (n+2s+\gamma, +\infty)$ the following holds: 

If $f\in C^0_{\ell, p}((0, T)\times \R^n\times \R^n)$ then  $f\in C^\alpha_{\ell, p-\alpha}((\tau, T)\times \R^n\times \R^n)$ for any $\tau\in (0, T)$, and the following estimate holds for all  $0 \le \tau_1 < \tau_2 < T$ with $|\tau_2-\tau_1| \le 1$,

\begin{align*}
\Vert f \Vert_{C_{\ell,p-\alpha}^{\alpha}((\tau_2,T)\times \R^n \times \R^n)} \le C (\tau_2 - \tau_1)^{-\frac{\alpha}{2s}} \Vert f \Vert_{C_{\ell,p}^0((\tau_1,T) \times \R^n \times \R^n)},
\end{align*}
where $C > 0$ depends only on $n,s_0,p, m_0,M_0,p_0,q, M_q$.
\end{lemma}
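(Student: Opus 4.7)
The plan is to reduce to a local Hölder estimate at each base point $z_0 = (t_0, x_0, v_0)$ with $t_0 \in [\tau_2, T)$, using the change of variables $\mathcal{T}_0$ from Section~\ref{sec:prelim} at a scale $\rho := \min\{(t_0-\tau_1)^{1/(2s)}, 1\}$, and then applying Proposition~\ref{prop:Holder-estimate-kinetic} to the transformed function $\tilde f := f \circ \mathcal{T}_0$. In these coordinates $\tilde f$ satisfies
\[
\partial_t \tilde f + v \cdot \nabla_x \tilde f = \mathcal{L}_{\tilde K_f} \tilde f + \tilde g \quad \text{in } Q_\rho,
\]
and by Lemma~\ref{lem:properties_kinetic_kernels}, $\tilde K_f$ satisfies (i)-(iv) of Proposition~\ref{prop:Holder-estimate-kinetic} with constants independent of $v_0$.

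Since Proposition~\ref{prop:Holder-estimate-kinetic} needs a global-in-$v$ $L^\infty$ bound on $\tilde f$, which by itself does not produce the weight $(1+|v_0|)^{-p+\alpha}$, I would decompose
\[
\mathcal{L}_{\tilde K_f} \tilde f = \mathcal{L}_{\tilde K_f \chi_{B_R}} \tilde f + T_R \tilde f
\]
for a sufficiently large (but fixed) $R$, and absorb $T_R \tilde f + \tilde g$ into a new source term in the resulting equation for $\tilde f$. The truncated kernel $\tilde K_f \chi_{B_R}$ still satisfies (i)-(iv) with the same constants, and because it couples $\tilde f(v)$ only with values at $v+h$ for $|h|<R$, the only $L^\infty$ norm of $\tilde f$ entering the estimate is over a bounded ball of radius $\sim R$ around $v_0$ in the original variables, where the weighted bound $|\tilde f(t,x,v)| \le C(1+|v_0|)^{-p}\|f\|_{C^0_{\ell,p}}$ applies.

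The two source contributions are controlled individually. For $\tilde g$, using $\gamma \ge 0$, \eqref{eq:moment}, and interpolation (to bound $M_\gamma$ in terms of $M_0$ and $M_q$), one obtains $(f \ast |\cdot|^\gamma)(\tilde v) \le C(1+|v_0|)^\gamma$ on the relevant region, yielding $\|\tilde g\|_{L^\infty(Q_\rho)} \le C(1+|v_0|)^{-2s-p}\|f\|_{C^0_{\ell,p}}$. For the tail $T_R \tilde f$, the pointwise upper bound on $\tilde K_f$ from Lemma~\ref{lem:properties_kinetic_kernels}(i) combined with Lemma~\ref{lemma:improved-Lemma-6.3} (applied in the original variables, which bounds the integrals of the type $\int f(v+h) K_f(v,v+h) \d h$ on the tail set $M$) gives $\|T_R \tilde f\|_{L^\infty(Q_\rho)} \le C(1+|v_0|)^{-p}\|f\|_{C^0_{\ell,p}}$. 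It is exactly this step that forces the range $p \in (\alpha, n-1) \cup (n+2s+\gamma, \infty)$ in the hypothesis.

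Applying Proposition~\ref{prop:Holder-estimate-kinetic} to the resulting equation then yields $[\tilde f]_{C^\alpha_\ell(Q_{\rho/2})} \le C\rho^{-\alpha}(1+|v_0|)^{-p}\|f\|_{C^0_{\ell,p}}$, which is transferred back to $f$ on $\mathcal{E}_{\rho/2}(z_0) = \mathcal{T}_0(Q_{\rho/2})$. The distortion of $\mathcal{T}_0$ in the direction parallel to $v_0$ (by a factor of order $|v_0|$) generates the factor $(1+|v_0|)^\alpha$ when comparing kinetic distances, thus turning $(1+|v_0|)^{-p}$ into the required $(1+|v_0|)^{-p+\alpha}$, while the choice of $\rho$ produces the time factor $(\tau_2-\tau_1)^{-\alpha/(2s)}$. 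Taking the supremum over admissible $Q_r(z_0) \subset \mathcal{E}_{\rho/2}(z_0)$ and over all $z_0$ gives the claimed weighted estimate. The main obstacle is the reconciliation between the global-in-$v$ $L^\infty$ hypothesis of Proposition~\ref{prop:Holder-estimate-kinetic} and the merely weighted decay of $\tilde f$ at infinity; it is resolved precisely by the kernel truncation together with the sharp tail bound of Lemma~\ref{lemma:improved-Lemma-6.3}.
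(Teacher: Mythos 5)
Your approach is genuinely different from the paper's and the high-level plan can be made to work, but as written it contains an imprecision in the key step that I want to flag concretely.

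The paper does not truncate the kernel: it truncates the \emph{solution} near the origin in the original velocity variable, setting $g=(1-\phi)f$ with $\phi$ supported in $B_{|v_0|/8}$, and observes that the commutator produces the single source term $h_1(v)=\int\phi(v+h)f(v+h)K_f(v,v+h)\,dh$ whose domain of integration is \emph{exactly} the set $M=\{|v+h|<|v_0|/8,\ |h|>1/2+|v_0|/8\}$ of \autoref{lemma:improved-Lemma-6.3}; this is then bounded by a single, direct application of that lemma. By contrast, you truncate the kernel $\tilde K_f$ at $|h|<R$ in the \emph{transformed} variables for a fixed $R$, and claim the resulting tail $T_R\tilde f$ is controlled by \autoref{lemma:improved-Lemma-6.3}. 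But the set $\{|h|\ge R\}$ in tilde-coordinates is the exterior of the ellipsoid $\tau_0(B_R)$ in original coordinates, not the set $M$. It contains two qualitatively different pieces: (a) a region of moderate $|h|$ (say $R\le|h|\lesssim 1$, tilde) whose image under the change of variables stays in a bounded ball $B_{2R+1}(v_0)$ around $v_0$ in original coordinates, where $f$ is only known to be $\lesssim(1+|v_0|)^{-p}\|f\|_{C^0_{\ell,p}}$ and \autoref{lemma:improved-Lemma-6.3} says nothing — here the correct bound uses only the upper bound (i) together with the weighted decay of $f$; and (b) a large-$|h|$ region whose image is near the origin, where a variant of \autoref{lemma:improved-Lemma-6.3} (with $|v_0|/2$ in place of $|v_0|/8$, say) applies. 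Asserting that the lemma "bounds the integrals on the tail set $M$" elides this split, and since (a) is the term that actually dominates (it gives $(1+|v_0|)^{-p}$ rather than $(1+|v_0|)^{-p-2s}$), the decomposition is not merely cosmetic.

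There are two further points you would need to spell out for your route to be rigorous. First, \autoref{prop:Holder-estimate-kinetic} is stated with the global norm $\|\tilde f\|_{L^\infty((-r^{2s},0]\times B_{r^{1+2s}}\times\R^n)}$; truncating the kernel does not by itself restrict this norm to a bounded ball. The clean fix is to additionally replace $\tilde f$ by $\eta\tilde f$ for a cutoff $\eta\equiv 1$ on $B_{R+1}$, checking that $\eta\tilde f$ solves the same equation in $Q_\rho$ because the truncated operator only sees values in $B_{R+1}$ — but this means you are, in fact, cutting off both the kernel and the solution, which is strictly more work than the paper's single solution cutoff. Second, you assert the truncated kernel $\tilde K_f\chi_{B_R}$ satisfies (i)--(iv) "with the same constants"; (i) and (iv) are immediate, but (ii) only holds for $r\le R$ (which is enough, but should be said), and (iii) requires re-absorbing the removed long-range contribution into the $\Lambda\|g\|_{L^2}^2$ term, enlarging $\Lambda$. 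None of these are fatal, but collectively they make your kernel-truncation route noticeably less efficient than the paper's solution-truncation; the latter sidesteps all three issues by design, since $g$ already vanishes near the origin and $h_1$ is exactly the integral over $M$.
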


\begin{proof}
Note that the claim follows once we show that for any $z_0,z \in (\tau_2,T) \times \R^n \times \R^n$ it holds
\begin{align}
\label{eq:claim-Holder}
|f(z_0) - f(z)| \le C (\tau_2 - \tau_1)^{-\frac{\alpha}{2s}} d_{\ell}(z_0,z)^{\alpha} (1 + |v_0|)^{-p+\alpha} \Vert f \Vert_{C^0_{\ell,p} ((\tau_1,T) \times \R^n \times \R^n)}.
\end{align}

Let us set $r := (\tau_2 - \tau_1)^{\frac{1}{2s}}$. Then, for any $z_0 \in (\tau_2,T) \times \R^n \times \R^n$, we have $Q_{r}(z_0) \subset (\tau_1,T) \times \R^n \times \R^n$.\\
We fix $z_0 = (t_0,x_0,v_0) \in (\tau_2,T) \times \R^n \times \R^n$, and divide the proof into two steps. 

{\bf Step 1.} First, we consider the case $|v_0| \le 2$. We apply the change of variables $\mathcal{T}_0$ to $f$, set $\tilde{f}(t,x,v) = f(\tilde{t},\tilde{x},\tilde{v}) = f(t_0 + t , x_0 + x + t v_0 , v_0 + v)$ and observe that $\tilde{f}$ solves (recall \eqref{eq:Qfg})
\begin{align*}
\partial_t \tilde{f} + v \cdot \nabla_x \tilde{f} = \mathcal{L}_{\tilde{K}_f} \tilde{f} + \tilde{h} ~~ \text{ in } Q_r,
\end{align*}
where
\begin{align*}
\tilde{h}(t,x,v) = c_b (\tilde{f} \ast_v |\cdot|^{\gamma}) \tilde{f}(t,x,v).
\end{align*}
By the mass and moment bound, and since $\gamma \in [0, q]$ (alternatively, by \cite[Lemma 2.3]{ImSi22}, which works in the exact same way if $q > 2$) we have for some $C > 0$, depending only on $M_0,M_q$, and $q$,
\begin{align*}
\Vert \tilde{h} \Vert_{L^{\infty}(Q_r)}  \le C(1-s) \Vert \tilde{f} \Vert_{L^{\infty}(Q_r)} (1 + |v_0|)^{\gamma} = C \Vert f \Vert_{L^{\infty}(Q_r(z_0))} (1 + |v_0|)^{\gamma} \le C \Vert f \Vert_{L^\infty((\tau_1,T) \times \R^n \times \R^n)},
\end{align*}
where we also used that $|v_0| \le 2$. Moreover, by \autoref{lemma:change-of-variables-1}, \autoref{thm:change-of-variables-nondegeneracy}, and \autoref{thm:change-of-variables-coercivity}, the kernel $\tilde{K}_f$ satisfies the assumptions (i), (ii), (iii), and (iv) of \autoref{prop:Holder-estimate-kinetic} (see \autoref{lem:properties_kinetic_kernels}). Thus, an application of  \autoref{prop:Holder-estimate-kinetic} to $\tilde{K}_f$ yields that for any $z_1,z_2 \in Q_{r/2}$:
\begin{align*}
|\tilde{f}(z_1) - \tilde{f}(z_2)| \le C r^{-\alpha} (\Vert \tilde{f} \Vert_{L^{\infty}((-r^{2s},0) \times B_{r^{1+2s}} \times \R^n)} + r^{2s} \Vert \tilde{h} \Vert_{L^{\infty}(Q_r)} ) d_{\ell}(z_1,z_2)^{\alpha},
\end{align*}
where $d_\ell$ denotes the kinetic distance (recall \autoref{def:kin_dist}). Undoing the change of variables, and choosing $z_1 = 0$ implies that for any $\tilde{z}_2 \in Q_{r/2}(z_0)$:
\begin{align*}
|f(z_0) - f(\tilde{z}_2)| &= |\tilde{f}(0) - \tilde{f}(z_2)| \le C r^{-\alpha} (\Vert \tilde{f} \Vert_{L^{\infty}((-r^{2s},0) \times \R^n \times \R^n)} + r^{2s} \Vert f \Vert_{L^\infty((\tau_1,T) \times \R^n \times \R^n)} ) d_{\ell}(0,z_2)^{\alpha} \\
&\le C r^{-\alpha} \Vert f \Vert_{L^\infty((\tau_1,T) \times \R^n \times \R^n)} d_{\ell}(z_0,\tilde{z}_2)^{\alpha},
\end{align*}
where we also used that $d_{\ell}(0,z_2) = d_{\ell}(z_0,\tilde{z}_2)$. This immediately implies \eqref{eq:claim-Holder} for $z_0 \in (\tau_2,T) \times \R^n \times B_2$.

{\bf Step 2.} Let us now consider $z_0 \in (\tau_2,T) \times \R^n \times \R^n$ with $|v_0| > 2$. Let $\phi \in C^{\infty}(\R^n)$ be a cut-off function that is supported in $B_{|v_0|/8}$, satisfies $0 \le \phi \le 1$, and $\phi \equiv 1$ in $B_{|v_0|/9}$. In particular, note that $\phi$ vanishes in $E_1(v_0)$. Then, we define $g(t,x,v) = (1-\phi(v)) f(t,x,v)$ and observe that $g$ solves
\begin{align*}
\partial_t g + v \cdot \nabla_x g = \mathcal{L}_{K_f}g + h_1 + h_2 ~~ \text{ in } (0,T) \times \R^n \times E_r(v_0).
\end{align*}
Here,
\begin{align*}
h_1(t,x,v) = \int_{\R^n} \phi(v+h) f(t,x,v+h) K_f(v,v+h) \d h , \qquad h_2(t,x,v) = c_b (f \ast_v |\cdot|^{\gamma}) f(t,x,v),
\end{align*}
As before, by the mass and moment bound,
\begin{align}
\label{eq:h2-est}
\Vert h_2 \Vert_{L^{\infty}(\mathcal{E}_r(z_0))}  \le C \Vert f \Vert_{L^{\infty}(\mathcal{E}_r(z_0))} (1 + |v_0|)^{\gamma} \le  C(1 + |v_0|)^{-p + \gamma}  \Vert f \Vert_{C^0_{\ell,p}((\tau_1,T) \times \R^n \times \R^n)}.
\end{align}
Moreover, for $h_1$ we observe that by construction of $\phi$, and for $v\in B_1(v_0)$, the domain of integration  is restricted to $M = \{ h \in \R^n : |v+h| < |v_0|/8, |h| > 1/2 + |v_0|/8 \}$. Hence, we obtain by \autoref{lemma:improved-Lemma-6.3}, and using that $\mathcal{E}_r(z_0) \subset \mathcal{E}_1(z_0) \subset Q_1(z_0) \subset (\tau_1 , T) \times \R^n \times \R^n$:
\begin{align}
\label{eq:h1-est}
\Vert h_1 \Vert_{L^{\infty}(\mathcal{E}_r(z_0))} \le \sup_{v\in B_1(v_0)}\int_{M} f(t, x, v+h) K_f(v,v+h) \d h \le C (1 + |v_0|)^{-p+\gamma} \Vert f \Vert_{C^0_{\ell,p}((\tau_1,T) \times \R^n \times \R^n)}.
\end{align}

Then, we apply the change of variables $\mathcal{T}_0$ to $g$, set $\tilde{g}(t,x,v) = g(\tilde{t},\tilde{x},\tilde{v})$ and observe that $\tilde{g}$ solves
\begin{align*}
\partial_t \tilde{g} + v \cdot \nabla_x \tilde{g} = \mathcal{L}_{\tilde{K}_f} \tilde{g} + \tilde{h} ~~ \text{ in } Q_r,
\end{align*}
where $\tilde{h}(t,x,v) = |v_0|^{-\gamma-2s}(h_1(\tilde{t},\tilde{x},\tilde{v}) + h_2(\tilde{t},\tilde{x},\tilde{v}))$. By \autoref{lemma:change-of-variables-1}, \autoref{thm:change-of-variables-nondegeneracy}, and \autoref{thm:change-of-variables-coercivity}, the kernel $\tilde{K}_f$ satisfies the assumptions (i), (ii), (iii), and (iv) of \autoref{prop:Holder-estimate-kinetic}. Therefore, \autoref{prop:Holder-estimate-kinetic} is applicable to $\tilde{K}_f$, and for any $z_1,z_2 \in Q_{r/2}$:
\begin{align*}
|\tilde{g}(z_1) - \tilde{g}(z_2)| \le C r^{-\alpha} (\Vert \tilde{g} \Vert_{L^{\infty}((-r^{2s},0) \times B_{r^{1+2s}} \times \R^n)} + r^{2s} \Vert \tilde{h} \Vert_{L^{\infty}(Q_{r})} ) d_{\ell}(z_1,z_2)^{\alpha}.
\end{align*}
Now, by construction
\begin{align*}
\Vert \tilde{g} \Vert_{L^{\infty}((-r^{2s},0) \times B_{r^{1+2s}} \times \R^n)} \le \Vert \tilde{f} \Vert_{L^{\infty}((-r^{2s},0) \times B_{r^{1+2s}} \times (\R^n \setminus B_{|v_0|/9}))} \le C (1 + |v_0|)^{-p} \Vert f \Vert_{C^0_{\ell,p}((\tau_1,T) \times \R^n \times \R^n)},
\end{align*}
and by \eqref{eq:h2-est} and \eqref{eq:h1-est}, using also that if $z \in Q_r$, then $\tilde{z} \in \cE_r(z_0) \subset (\tau_1,T) \times \R^n \times \R^n$:
\begin{align*}
\Vert \tilde{h} \Vert_{L^{\infty}(Q_r)} \le  |v_0|^{-\gamma-2s} \Vert h_1 \Vert_{L^{\infty}(\cE_r(z_0))} + |v_0|^{-\gamma-2s} \Vert h_2 \Vert_{L^{\infty}(\cE_r(z_0))} \le 
C(1+|v_0|)^{-p-2s}  \Vert f \Vert_{C^0_{\ell,p}((\tau_1,T) \times \R^n \times \R^n)}.
\end{align*}
Altogether, choosing $z_1 = 0$, this implies for any $\tilde{z}_2 \in \mathcal{E}_{r/2}(z_0)$:
\begin{align*}
|f(z_0) - f(\tilde{z}_2)| = |\tilde{f}(0) - \tilde{f}(z_2)| &= |\tilde{g}(0) - \tilde{g}(z_2)| \le C r^{-\alpha} (1+|v_0|)^{-p}  \Vert f \Vert_{C^0_{\ell,p}((\tau_1,T) \times \R^n \times \R^n)} d_{\ell}(0,z_2)^{\alpha} \\
&\le C r^{-\alpha} (1+|v_0|)^{-p+\alpha}  \Vert f \Vert_{C^0_{\ell,p}((\tau_1,T) \times \R^n \times \R^n)} d_{\ell}(z_0,\tilde{z}_2)^{\alpha},
\end{align*}
where we also used that $\tilde{g} = \tilde{f}$ in $Q_{1/2}$ and $d_{\ell}(z_1,z_2) \le (1+|v_0|) d_{\ell}(\tilde{z}_1,\tilde{z}_2)$. Moreover, if $\tilde{z}_2 \not \in \mathcal{E}_{r/2}(z_0)$, we have $d_{\ell}(z_0,\tilde{z}_2) \ge c r (1 + |v_0|)^{-1}$ and therefore it holds
\begin{align*}
& |f(z_0) - f(\tilde{z}_2)| \le 2 \Vert f \Vert_{L^{\infty}((\tau_1,T) \times \R^n \times \R^n)} \\
&\qquad \le C (1 + |v_0|)^{-p} \Vert f \Vert_{C^0_{\ell,p}((\tau_1,T) \times \R^n \times \R^n)} \le C r^{-\alpha} (1+|v_0|)^{-p+\alpha}  \Vert f \Vert_{C^0_{\ell,p}((\tau_1,T) \times \R^n \times \R^n)} d_{\ell}(z_0,\tilde{z}_2)^{\alpha}.
\end{align*}
Hence, we have   verified \eqref{eq:claim-Holder} also  for $z_0 \in (\tau_2,T) \times \R^n \times \R^n$ with $|v_0| > 2$ and the proof is complete.
\end{proof}

Note that in \eqref{eq:h2-est} we heavily used that $\gamma \ge 0$. For $\gamma < 0$, the bound in \eqref{eq:h2-est} would have a nonlinear dependence on $\Vert f \Vert_{C_{\ell,p}^0}$ which is incompatible with our strategy tho prove \autoref{thm:Holder-estimate}.

\subsection{Replacing decay estimates by interpolation}

The estimate in \autoref{lemma:Holder-estimate} is not satisfactory, since we do not have an estimate of $\Vert f \Vert_{C_{\ell,p}^0}$ only in terms of universal constants and the macroscopic bounds. Such decay estimate was established in \cite{Sil16,IMS20}, but the proof heavily relies on the existence of nondegeneracy cones, which in turn relies on the boundedness of the entropy. It seems like the technique in \cite{Sil16,IMS20} cannot easily be generalized so that it works solely under temperature and moment bounds.

We will circumvent proving decay estimates by establishing an  interpolation result, which allows us to estimate the $C^{\alpha}_{\ell,p-\alpha}$ norm from \autoref{lemma:Holder-estimate} by a higher moment of $f$.

We have the following interpolation result.

\begin{lemma}
\label{lemma:interpolation-kinetic}
Let $f \in C^{\alpha}_{\ell,p}((\tau,T) \times \R^n \times \R^n)$ for some $\alpha \in (0,1)$. Then, for any $r > 0$
\begin{align*}
\Vert f \Vert_{C^0_{\ell,p}((\tau,T) \times \R^n \times \R^n)} \le r^{\alpha} \Vert f \Vert_{C^{\alpha}_{\ell,p}((\tau,T) \times \R^n \times \R^n)} + C r^{-n} \Vert f \Vert_{L^{\infty}_{t,x}L^1_{\ell,p}((\tau,T) \times \R^n \times \R^n)},
\end{align*}
where $C > 0$ depends only on $n$. Moreover, it holds for any $\eps \in (0, 1)$:
\begin{align*}
\Vert f \Vert_{C^0_{\ell,p}((\tau,T) \times \R^n \times \R^n)} \le \eps^{\alpha} \Vert f \Vert_{C^{\alpha}_{\ell,p-\alpha}((\tau,T) \times \R^n \times \R^n)} + C \eps^{-n} \Vert f \Vert_{L^{\infty}_{t,x}L^1_{\ell,p+n}((\tau,T) \times \R^n \times \R^n)}.
\end{align*}
\end{lemma}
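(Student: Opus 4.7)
The result is a standard $C^0$--$C^\alpha$/$L^1$ interpolation carried out purely in the velocity variable, exploiting the fact that at fixed $(t,x)$ the kinetic distance reduces to the Euclidean distance on $v$. Fix $z_0=(t_0,x_0,v_0)$ admissible in the sense that $Q_r(z_0)\subset(\tau,T)\times\R^n\times\R^n$. For every $v\in B_r(v_0)$, the point $(t_0,x_0,v)$ lies in $Q_r(z_0)$ at kinetic distance $|v-v_0|<r$ from $z_0$, so by \autoref{def:kin_dist}
\[
|f(z_0)-f(t_0,x_0,v)|\le r^\alpha[f]_{C^\alpha_\ell(Q_r(z_0))}.
\]
Averaging over $v\in B_r(v_0)$ and applying the triangle inequality yields the basic estimate
\[
|f(z_0)|\le r^\alpha[f]_{C^\alpha_\ell(Q_r(z_0))}+\frac{1}{|B_r|}\int_{B_r(v_0)}|f(t_0,x_0,v)|\d v.
\]

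For the first inequality I may assume $r\in(0,1]$: for $r>1$ it is trivial since $r^\alpha\Vert f\Vert_{C^\alpha_{\ell,p}}\ge\Vert f\Vert_{C^\alpha_{\ell,p}}\ge\Vert f\Vert_{C^0_{\ell,p}}$. Multiplying the basic estimate by $(1+|v_0|)^p$ and using the elementary fact that $1+|v_0|\le 2(1+|v|)$ whenever $|v-v_0|<1$, I obtain
\[
(1+|v_0|)^p|f(z_0)|\le r^\alpha(1+|v_0|)^p[f]_{C^\alpha_\ell(Q_r(z_0))}+\frac{2^p}{|B_r|}\int_{B_r(v_0)}|f(t_0,x_0,v)|(1+|v|)^p\d v.
\]
Since $B_r(v_0)$ is exactly the velocity slice of $Q_r(z_0)$ at $(t_0,x_0)$ and $|B_r|^{-1}\le Cr^{-n}$, the last integral is controlled by $Cr^{-n}\Vert f\Vert_{L^\infty_{t,x}L^1_{\ell,p}((\tau,T)\times\R^n\times\R^n)}$, while the first term is bounded by $r^\alpha\Vert f\Vert_{C^\alpha_{\ell,p}((\tau,T)\times\R^n\times\R^n)}$. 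Taking the supremum over admissible $z_0$ finishes the first part.

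For the second inequality I run the same argument with the specific choice $r:=\eps/(1+|v_0|)$, which lies in $(0,1]$ since $\eps\in(0,1)$ and $1+|v_0|\ge 1$. The H\"older piece then rearranges as
\[
r^\alpha(1+|v_0|)^p[f]_{C^\alpha_\ell(Q_r(z_0))}=\eps^\alpha(1+|v_0|)^{p-\alpha}[f]_{C^\alpha_\ell(Q_r(z_0))}\le\eps^\alpha\Vert f\Vert_{C^\alpha_{\ell,p-\alpha}},
\]
absorbing $\alpha$ powers of the weight into the lower-weight H\"older norm. The averaged piece, again using $1+|v_0|\le 2(1+|v|)$ on $B_r(v_0)$, is bounded by
\[
C\eps^{-n}(1+|v_0|)^{p+n}\int_{B_r(v_0)}|f(t_0,x_0,v)|\d v\le C\eps^{-n}\Vert f\Vert_{L^\infty_{t,x}L^1_{\ell,p+n}},
\]
where the extra $n$ powers of the weight are paid for by $|B_r|^{-1}=Cr^{-n}=C\eps^{-n}(1+|v_0|)^n$. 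Taking the supremum over $z_0$ gives the claim.

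There is no serious obstacle; the one bit of bookkeeping is that the kinetic norms involve cylinders strictly inside the domain, but since the argument only asks for one sufficiently small admissible $r$ at each interior point $z_0$, this is automatic.
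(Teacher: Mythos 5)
Your proof is correct and takes essentially the same route as the paper: fix $(t,x)$, compare $f$ at $v_0$ with its average over $B_r(v_0)$, control the deviation by the H\"older seminorm, multiply by $(1+|v_0|)^p$, and for the second inequality choose $r=\eps(1+|v_0|)^{-1}$ to absorb $\alpha$ powers of the weight into the H\"older norm and pay $n$ powers into the moment. You are slightly more explicit about the $2^p$ factor coming from replacing $(1+|v_0|)^p$ by $(1+|v|)^p$ on $B_r(v_0)$ (so strictly the constant $C$ also depends on $p$), which the paper's proof glosses over.
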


\begin{proof}
Let $(t, x)\in (\tau, T)\times \R^n$ be fixed. First, we claim that for any $r > 0$ and $v \in \R^n$:
\begin{align*}
|f(t, x, v)|\le r^\alpha\|f(t, x, \cdot)\|_{C^\alpha_v(B_r(v))}+  Cr^{-n} \|f(t, x, \cdot)\|_{L^1(B_r(v))}. 
\end{align*}

To see this, we compute
\begin{align*}
|f(t,x,v)| &\le |f(t,x,v) - (f(t,x,\cdot))_{r,v}| + {f(t,x,\cdot)}_{r,v} \\
&\le \sup_{w \in B_r(v)} |f(t,x,v) - f(t,x,w)| + c r^{-n} \Vert f(t,x,\cdot) \Vert_{L^1(B_r(v))} \\
&\le r^{\alpha} \Vert f(t,x,\cdot) \Vert_{C^{\alpha}(B_{r}(v))} + c r^{-n} \Vert f(t,x,\cdot) \Vert_{L^1(B_{r}(v))},
\end{align*}
where we denoted $(f(t,x,\cdot))_{r,v} = \dashint_{B_r(v)} f(t,x,w) \d w$.

Multiplying both sides of the estimate by $(1+|v|)^p$ 
\[
\begin{split}
(1+|v|)^p|f(t, x, v)|& \le r^\alpha (1+|v|)^p \|f(t, x, \cdot)\|_{C^\alpha_v(B_r(v))}+  Cr^{-n} (1+|v|)^p\|f(t, x, \cdot)\|_{L^1(B_r(v))}
\end{split}
\]
so that taking the supremum over $v$ gives the first inequality. Moreover, fixing $r = \eps (1+|v|)^{-1}<1$,
\[
\begin{split}
(1+|v|)^p|f(t, x, v)|
& \le  \eps^{\alpha} (1+|v|)^{p-\alpha} \Vert f(t, x, \cdot) \Vert_{C^{\alpha}(B_r(v))} + C \eps^{-n} (1+|v|)^{p + n}\Vert f(t, x, \cdot) \Vert_{L^1(B_r(v))},
\end{split}
\]
we obtain, after taking again the supremum over $v$, the second inequality. This concludes the proof. 
\end{proof}

Moreover, we will make use of the following standard iteration lemma, which can be found for instance in \cite[Lemma 6.1]{Giu03}:

\begin{lemma}
\label{lemma:Giusti}
Let $F : [T_1, T_2] \mapsto [0,\infty)$ be bounded. Assume that there are $A \ge 0$ and $\gamma > 0$  such that for every $T_1 \le t_1 < t_2 \le T_2$ it holds
\begin{align*}
F(t_1) \le \frac{1}{2}F(t_2) + A (t_2 - t_1)^{-\gamma}.
\end{align*}
Then there is a constant $c > 0$, depending only on $\gamma$, such that for every $T_1 \le s_1 < s_2 \le T_2$
\begin{align*}
F(s_1) \le c A (s_2 - s_1)^{-\gamma}.
\end{align*}
\end{lemma}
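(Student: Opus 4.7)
The plan is to prove this by the standard hole-filling iteration. Fix $T_1 \le s_1 < s_2 \le T_2$, and pick a parameter $\tau \in (0,1)$ to be chosen later depending only on $\gamma$. I would define an increasing sequence $\{t_k\}_{k \ge 0} \subset [s_1, s_2]$ by $t_0 = s_1$ and
\[
t_{k+1} - t_k = (1-\tau)\,\tau^{k}\,(s_2 - s_1),
\]
so that $t_k \nearrow s_2$ as $k \to \infty$. The idea is to apply the hypothesis with $t_1 = t_k$ and $t_2 = t_{k+1}$, obtaining
\[
F(t_k) \le \tfrac{1}{2} F(t_{k+1}) + A\,(1-\tau)^{-\gamma}\,\tau^{-k\gamma}\,(s_2 - s_1)^{-\gamma},
\]
and then iterate: multiplying the $k$-th inequality by $2^{-k}$ and telescoping over $k = 0, \dots, N-1$, one gets
\[
F(s_1) \le 2^{-N} F(t_N) + A\,(1-\tau)^{-\gamma}\,(s_2 - s_1)^{-\gamma} \sum_{k=0}^{N-1} \bigl(2\,\tau^{\gamma}\bigr)^{-k}.
\]

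The key quantitative step is to choose $\tau$ so that the geometric series $\sum_{k\ge 0}(2\tau^{\gamma})^{-k}$ converges, i.e., $2\tau^\gamma > 1$. A convenient choice is $\tau = 2^{-1/(2\gamma)}$, which gives ratio $2\tau^\gamma = \sqrt{2}$, so the sum equals $(1-2^{-1/2})^{-1}$. Since $F$ is bounded on $[T_1, T_2]$, the term $2^{-N} F(t_N)$ vanishes as $N \to \infty$, and one obtains
\[
F(s_1) \le A\,(1-\tau)^{-\gamma}\,(s_2 - s_1)^{-\gamma} \sum_{k=0}^{\infty} \bigl(2\tau^{\gamma}\bigr)^{-k} = c\,A\,(s_2 - s_1)^{-\gamma},
\]
with $c > 0$ depending only on $\gamma$ through the explicit choice of $\tau$.

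There is no real obstacle here; the only subtlety is that one must exploit the factor $1/2$ in the hypothesis to beat the blow-up $(t_{k+1} - t_k)^{-\gamma} \sim \tau^{-k\gamma}$, which forces the constraint $\tau^\gamma > 1/2$ and thereby fixes the choice of $\tau$ (and hence the constant $c$) in terms of $\gamma$ alone. The boundedness of $F$ is used only to kill the residual term $2^{-N}F(t_N)$; no regularity of $F$ is needed.
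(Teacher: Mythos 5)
Your proposal is correct and is essentially identical to the paper's proof: the same geometric partition $t_{k+1}-t_k=(1-\tau)\tau^k(s_2-s_1)$, the same telescoping with weights $2^{-k}$, the same choice $\tau=2^{-1/(2\gamma)}$ to make the series $\sum(2\tau^\gamma)^{-k}$ converge, and the same use of boundedness of $F$ to discard the residual $2^{-N}F(t_N)$.
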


For the sake of completeness, let us give a proof of this lemma.

\begin{proof}
We set $\tau_0 = s_1$ and $\tau_{i+1} = \tau_i + (1 - \sigma) \sigma^i (s_2 - s_1)$ for some $\sigma \in (0,1)$ to be chosen later. Then,
\begin{align*}
F(s_1) = F(\tau_0) \le 2^{-k} F(\tau_k) + A (1 - \sigma)^{-\gamma} (s_2 - s_1)^{-\gamma} \sum_{i = 0}^{k-1} 2^{-i} \sigma^{-i \gamma}.
\end{align*}
Choosing $\sigma = 2^{-\frac{1}{2\gamma}}$ we obtain
\begin{align*}
F(s_1) \le \lim_{k \to \infty} 2^{-k} \big(\sup_j F(\tau_j) \big)  + A (1 - \sigma)^{-\gamma} (s_2 - s_1)^{-\gamma} \sum_{ i = 0}^{\infty} 2^{-\frac{i}{2}} \le c A (s_2 - s_1)^{-\gamma}.
\end{align*}
\end{proof}

By combination of \autoref{lemma:Holder-estimate} and \autoref{lemma:interpolation-kinetic}, we obtain a global H\"older estimate only in terms of macroscopic bounds and universal constants.

\begin{theorem}
\label{thm:Holder-estimate}
Let $q > n$, $s_0 \in (0,1)$, $s \in [s_0,1)$. Let $\gamma \ge 0$, and $\gamma + 2s \in [0,q]$. Let $f$ be a solution to the Boltzmann equation in $(0,T) \times \R^n \times \R^n$ (see \autoref{def:solution}) satisfying \eqref{eq:mass}, \eqref{eq:temperatureB}, and \eqref{eq:moment}. Assume, in addition, that either $q < 2n-1 $ or $q > 2n+\gamma+2s$. \\
If $f \in C^0_{\ell,q-n}((0,T) \times \R^n \times \R^n)$, then $f \in C^{\alpha}_{\ell,p}((\tau,T) \times \R^n \times \R^n)$ for any $\tau \in (0,T)$ and the following estimate holds true 
\begin{align*}
\Vert f \Vert_{C_{\ell,p}^{\alpha}((\tau,T)\times \R^n \times \R^n)} \le C,\qquad p = q-n-\alpha,
\end{align*}
for some $\alpha \in (0,q-n)$ and $C > 0$ depending only on $n,s_0,m_0,M_0,p_0,M_q,q$, and $C$ depending also on $\tau$.
\end{theorem}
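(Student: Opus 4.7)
The plan is to combine the local Hölder estimate of \autoref{lemma:Holder-estimate} with the interpolation inequality of \autoref{lemma:interpolation-kinetic}, reabsorb the Hölder seminorm via a suitable choice of the interpolation parameter, and close the argument by a Giusti-type iteration based on \autoref{lemma:Giusti}.

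First I fix $\alpha \in (0, \alpha_0)$ with $\alpha < q - n$, chosen so that $p := q - n$ lies in the admissible range $(\alpha, n - 1) \cup (n + 2s + \gamma, \infty)$ of \autoref{lemma:Holder-estimate}; this is exactly where the dichotomy $q < 2n - 1$ vs.\ $q > 2n + \gamma + 2s$ enters. Setting
\begin{equation*}
F(\tau) := \|f\|_{C^{\alpha}_{\ell, q - n - \alpha}((\tau, T) \times \R^n \times \R^n)},
\end{equation*}
I concatenate \autoref{lemma:Holder-estimate} (with $p = q - n$), applied on $(\tau_2, T)$, with \autoref{lemma:interpolation-kinetic}, applied on $(\tau_1, T)$, and use the trivial bound $\|f\|_{L^\infty_{t,x} L^1_{\ell, q}} \le C(M_0 + M_q)$ coming from \eqref{eq:mass} and \eqref{eq:moment}. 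This gives, for $0 \le \tau_1 < \tau_2 < T$ with $\tau_2 - \tau_1 \le 1$ and every $\epsilon \in (0, 1)$:
\begin{equation*}
F(\tau_2) \le C (\tau_2 - \tau_1)^{-\alpha/(2s)} \bigl( \epsilon^\alpha F(\tau_1) + C \epsilon^{-n} (M_0 + M_q) \bigr).
\end{equation*}
Picking $\epsilon := \eta (\tau_2 - \tau_1)^{1/(2s)}$ with $\eta > 0$ small enough (independent of $\tau_1, \tau_2$) to absorb the first term into $\tfrac{1}{2} F(\tau_1)$ produces the key estimate
\begin{equation*}
F(\tau_2) \le \tfrac{1}{2} F(\tau_1) + A (\tau_2 - \tau_1)^{-\gamma}, \qquad \gamma := \frac{\alpha + n}{2s},
\end{equation*}
with $A$ depending only on the macroscopic bounds and universal constants.

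To close the argument I fix the target time $\tau^* \in (0, T)$ (reducing to $\tau^* \le 1$ if needed) and introduce $G(s) := F(\tau^* - s)$ for $s \in [0, \tau^*/2]$. The hypothesis $f \in C^0_{\ell, q - n}$ together with a single application of \autoref{lemma:Holder-estimate} forces $F(\tau^*/2) < \infty$, and since $F$ is decreasing in $\tau$ this implies that $G$ is bounded on $[0, \tau^*/2]$. Applying the key estimate with $\tau_1 = \tau^* - s_2$ and $\tau_2 = \tau^* - s_1$ translates it into
\begin{equation*}
G(s_1) \le \tfrac{1}{2} G(s_2) + A (s_2 - s_1)^{-\gamma} \quad \text{for all } 0 \le s_1 < s_2 \le \tau^*/2,
\end{equation*}
which is precisely the hypothesis of \autoref{lemma:Giusti}. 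Its conclusion, specialized to $s_1 = 0$ and $s_2 = \tau^*/2$, yields $F(\tau^*) = G(0) \le c A (\tau^*/2)^{-\gamma}$, which is the desired uniform bound (depending only on $n, s_0, m_0, M_0, p_0, M_q, q$ and $\tau^*$).

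The main obstacle is conceptual rather than computational: since $F$ is naturally decreasing in $\tau$ whereas \autoref{lemma:Giusti} is formulated for a bounded, increasing-type quantity, one must recognize that the correct way to close the iteration is by reversing direction via the change of variable $s = \tau^* - \tau$. The other delicate point is that the splitting $q < 2n - 1$ or $q > 2n + \gamma + 2s$ is exactly what ensures that the weight $p = q - n$ falls inside the admissible range of \autoref{lemma:Holder-estimate}, thereby making the weighted interpolation compatible with the regularization step.
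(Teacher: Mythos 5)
Your proof is correct and takes essentially the same approach as the paper's: combine \autoref{lemma:Holder-estimate} and \autoref{lemma:interpolation-kinetic}, choose $\eps \sim (\tau_2-\tau_1)^{1/(2s)}$ to absorb the H\"older seminorm, and close via the iteration lemma \autoref{lemma:Giusti}. The paper implements the same time-reversal idea by defining $F(r)=\Vert f\Vert_{C^\alpha_{\ell,\bar p-\alpha}((T-r,T)\times\R^n\times\R^n)}$ directly, rather than via your substitution $s=\tau^*-\tau$, but this is a notational difference only.
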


\begin{proof}
Let us set $\bar p = q - n$ and fix $\tau \in (0,T)$. 
By application of \autoref{lemma:Holder-estimate} and \autoref{lemma:interpolation-kinetic} we have for any $\max \{0 , \tau - 1 \} \le \tau_1 < \tau_2 \le \tau < T$ and any $\eps \in (0,1)$:
\begin{align*}
\Vert f \Vert_{C_{\ell,\bar p-\alpha}^{\alpha}((\tau_2,T)\times \R^n \times \R^n)} &\le C (\tau_2 - \tau_1)^{-\frac{\alpha}{2s}} \Vert f \Vert_{C_{\ell,\bar p}^0((\tau_1,T) \times \R^n \times \R^n)} \\
&\le C (\tau_2 - \tau_1)^{-\frac{\alpha}{2s}} \eps^{\alpha}  \Vert f \Vert_{C_{\ell,\bar p-\alpha}^{\alpha}((\tau_1,T)\times \R^n \times \R^n)} + (\tau_2 - \tau_1)^{-\frac{\alpha}{2s}} C\eps^{-n} M_{q},
\end{align*}
for any $\alpha \in (0, \min \{\bar p,\alpha_0\})$, where $\alpha_0 > 0$ and $C > 0$ depend only on $n,s_0,m_0,M_0, M_q,p,p_0$. Here we also used that $\Vert f \Vert_{L^{\infty}_{t,x}L^1_{\ell,\bar p+n}((\tau_1,T) \times \R^n \times \R^n)} \le M_{\bar p+n} = M_q$.
Next, let us fix $\alpha \in (0,\min\{p,\alpha_0\})$ and choose $\eps = (\tau_2 - \tau_1)^{\frac{1}{2s}}(2C)^{-\frac{1}{\alpha}}$. Then, we have shown that for any $\max\{0, \tau-1 \} \le \tau_1 < \tau_2 \le \tau < T$:
\begin{align*}
\Vert f \Vert_{C_{\ell,\bar p-\alpha}^{\alpha}((\tau_2,T)\times \R^n \times \R^n)} \le \frac{1}{2}\Vert f \Vert_{C_{\ell,\bar p-\alpha}^{\alpha}((\tau_1,T)\times \R^n \times \R^n)} + C_2 (\tau_2 - \tau_1)^{-\frac{n+\alpha}{2s}}  M_q
\end{align*}
for some $C_2 > 0$, depending only on $C,\alpha$. Let us now denote 
\begin{align*}
F(r) = \Vert f \Vert_{C_{\ell,\bar p-\alpha}^{\alpha}((T-r,T)\times \R^n \times \R^n)}.
\end{align*}
The aforementioned statement reads now as follows: for any $T- \tau \le t_1 < t_2 \le \min \{T , T - (\tau - 1)\}$ it holds
\begin{align*}
F(t_1) \le \frac{1}{2} F(t_2) + C_2 M_q (t_2 - t_1)^{-\frac{n+\alpha}{2s}}.
\end{align*}
Note that $F$ is bounded since $f \in C^{\alpha}_{\ell,\bar p - \alpha}((\tau,T) \times \R^n \times \R^n)$ for any $\tau \in (0,T)$ due to the assumption that $f \in C^{0}_{\ell,\bar p}((0,T) \times \R^n \times \R^n)$ and \autoref{lemma:Holder-estimate}. Thus, we can apply \autoref{lemma:Giusti} to $F$ and deduce that
\begin{align*}
\Vert f \Vert_{C_{\ell,\bar p-\alpha}^{\alpha}((\tau,T)\times \R^n \times \R^n)} &= F(T-\tau) \\
&\le  C_3 (\min\{T, T-(\tau-1)\} - (T-\tau))^{-\frac{n+\alpha}{2s}} M_q = C_3 \min\{\tau,1\}^{-\frac{n+\alpha}{2s}} M_q
\end{align*}
for some $C_3 > 0$, depending only on $C_2,n,s_0,\alpha$.
\end{proof}

\subsection{Smoothness of solutions}

As a consequence of \autoref{thm:Holder-estimate}, we deduce that the entropy is finite.

\begin{proof}[Proof of \autoref{thm:entropy-finite} and \autoref{thm:grazing}]
The result follows directly from \autoref{thm:Holder-estimate}. Observe that, given $\gamma$ and $s$ fixed, we can obtain any $p$ very large by fixing $q = n+p+1$  in \autoref{thm:Holder-estimate}, for example.
\end{proof}

As a consequence, we deduce \autoref{cor:smoothness}.

\begin{proof}[Proof of \autoref{cor:smoothness}]
Due to \autoref{thm:entropy-finite}, we can follow the regularity program of Imbert--Silvestre and obtain the $C^{\infty}$ regularity of solutions to the Boltzmann equation in the same way as in \cite{ImSi22}.
\end{proof}

\section{Regularity for the Landau equation}
\label{sec:Landau}

The goal of this section is to explain how to establish \autoref{cor:Landau} using the techniques developed in this article.

First, we recall the kinetic cylinders $Q_r(z_0)$ and the change of variables $\tau_0$ and $\mathcal{T}_0$ from Section \ref{sec:aux}. For the Landau equation, we define them in the exact same way, setting $s = 1$. Note that when $f$ solves the Landau equation in $\mathcal{E}_1(z_0)$, \eqref{eq:Landau}, then $\tilde{f}$ solves
\begin{align}
\label{eq:Landau-transformed}
\partial_t \tilde{f} + v \cdot \nabla_x \tilde{f} = \nabla_v [\tilde{A} \nabla_v \tilde{f}] + \tilde{b} \cdot \nabla_v \tilde{f} + \tilde{c} \tilde{f}  ~~ \text{ in } Q_1,
\end{align}

where (recall \eqref{eq:a_Landau}-\eqref{eq:b_Landau}-\eqref{eq:c_Landau})
\begin{align*}
\tilde{A}(t,x,v) e &= \begin{cases}
|v_0|^{-\gamma-2} \tau_0^{-1} (A(\tilde{t},\tilde{x},\tilde{v}) \tau_0^{-1}e), ~~ \text{ if } |v_0| \ge 2,\\
A(\tilde{t},\tilde{x},\tilde{v})e ~\quad~\quad\qquad\qquad\qquad \text{ if } |v_0| < 2,
\end{cases}\quad\text{for any}\quad e\in \R^n,\\
\tilde{b}(t,x,v) &= \begin{cases}
|v_0|^{-\gamma-2} \tau_0^{-1} b(\tilde{t},\tilde{x},\tilde{v}), ~~ \text{ if } |v_0| \ge 2,\\
b(\tilde{t},\tilde{x},\tilde{v}) ~~~~\quad\qquad\qquad \text{ if } |v_0| < 2,
\end{cases}\\
\tilde{c}(t,x,v) &= \begin{cases}
|v_0|^{-\gamma-2} c(\tilde{t},\tilde{x},\tilde{v}), ~~ \text{ if } |v_0| \ge 2,\\
c(\tilde{t},\tilde{x},\tilde{v}) ~~~~~\quad\qquad  \text{ if } |v_0| < 2,
\end{cases}
\end{align*}


In order to prove \autoref{cor:Landau}, we first need to establish uniform ellipticity of the transformed matrix $\tilde{A}$ in $B_2$ (in analogy to \autoref{prop:properties_kinetic_kernels}). Moreover, we require suitable upper bounds for the lower order terms $\tilde{b}$ and $\tilde{c}$.

For the lower bound in the uniform ellipticity of $\tilde{a}$, we use the pressure lower bound on $f$ and proceed in the same way as in the proof of \autoref{thm:change-of-variables-nondegeneracy}: 

\begin{lemma}
\label{lemma:Landau-ellipticity}
Let $\gamma > -n$.  Assume that $f$ is nonnegative and satisfies \eqref{eq:mass}, \eqref{eq:temperatureB}, and \eqref{eq:moment} for some $q > 2$. Then, $\tilde{A}$ with $v_0\in \R^n$ satisfies 
\begin{align*}
e \cdot \tilde{A}(v) e  \ge \lambda   \qquad\text{for all}\quad v \in B_2, ~~ e \in  \mathbb{S}^{n-1}
\end{align*}
uniformly in $v_0$, with $\lambda > 0$ depending only on $n,m_0,M_0,p_0,M_q,q,$ and $\gamma$.
\end{lemma}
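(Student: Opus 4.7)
The plan is to adapt the proof of \autoref{thm:change-of-variables-nondegeneracy} by exploiting the algebraic identity $e \cdot (I - \hat w \otimes \hat w) e = \sin^2(w,e)$ with $\hat w = w/|w|$, which recasts the Landau quadratic form as
\[
e \cdot A(\tilde v) e = a_{n,\gamma} \int_{\R^n} f(u)\, \sin^2(u - \tilde v,\, e) \,|u - \tilde v|^{\gamma+2} \d u.
\]
This is precisely the right-hand side of \eqref{eq:nondegenerate-goal} in the proof of \autoref{thm:Boltzmann-kernel-nondegenerate}, with the exponent $\gamma+2s$ replaced by $\gamma+2$. Consequently, the Boltzmann argument transfers essentially verbatim.

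First I would handle the case $|v_0| \le 2$: here $\tilde A(v) = A(v_0 + v)$ with $v_0 + v \in B_4$, and I would invoke \autoref{prop:subtract-tube} to extract a tube $L_\delta$ around the line $(v_0 + v) + \R e$. On $B_R(\bar v)\setminus L_\delta$ we have $\sin^2(u - (v_0+v), e) \ge c > 0$ by the argument leading to \eqref{eq:Gcomp}, while $|u - (v_0+v)|^{\gamma+2}$ is comparable to a constant depending only on $R$, $|\bar v|$ and $\gamma$. This yields $e\cdot \tilde A(v)e\ge \lambda>0$ exactly as in \eqref{eq:eq-case-1_Br}. For $|v_0| > 2$, I would use that $\tau_0^{-1}$ is a symmetric linear map, together with the identity $|\tau_0^{-1}(e)|^2 = 1 + (|v_0|^2 - 1)\cos^2(v_0, e)$ from \eqref{eq:t0-norm-compute}, to rewrite
\[
e \cdot \tilde A(v) e \;=\; a_{n,\gamma}\, |v_0|^{-\gamma-2}\, |\tau_0^{-1}(e)|^2 \int_{\R^n} f(u)\, \sin^2(u - \tilde v,\, \tau_0^{-1}(e))\, |u - \tilde v|^{\gamma+2} \d u.
\]
Since $\tilde v \in E_2(v_0)\subset B_2(v_0)$, whenever $u \in B_R(\bar v)$ and $|v_0| \ge 10(R + |\bar v| + 1)$ one has $|u - \tilde v| \asymp |v_0|$, so that the factor $|v_0|^{-\gamma-2}\, |u-\tilde v|^{\gamma+2}$ is of order one. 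The problem therefore reduces to bounding from below
\[
\bigl[1 + (|v_0|^2 - 1)\cos^2(v_0, e)\bigr] \int_{B_R(\bar v)} f(u)\, \sin^2(u - \tilde v,\, \tau_0^{-1}(e))\, \d u,
\]
which is exactly the quantity appearing in \eqref{eq:trafo-nondegeneracy-help-0}. Its lower bound is established via the two-case dichotomy $\cos^2(v_0, e) \ge c_0$ versus $\cos^2(v_0, e)\le c_0$ carried out in the final step of the proof of \autoref{thm:change-of-variables-nondegeneracy}, and works identically here.

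The main conceptual obstacle is really just recognizing the algebraic identity that links the Landau diffusion matrix to the Boltzmann nondegeneracy quadratic form; once this is in hand, no genuinely new estimate is needed. The pressure bound \eqref{eq:temperatureB} and moment bound \eqref{eq:moment} enter exclusively through the single application of \autoref{prop:subtract-tube}, exactly as in the Boltzmann case, so the dependence of $\lambda$ on $n$, $m_0$, $M_0$, $p_0$, $M_q$, $q$ and $\gamma$ follows automatically from the Boltzmann proof.
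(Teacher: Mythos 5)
Your proposal is correct and takes essentially the same route as the paper: both rest on the identity $e\cdot(I-\frac{w}{|w|}\otimes\frac{w}{|w|})e=\sin^2(w,e)$, the symmetry of $\tau_0^{-1}$, \autoref{prop:subtract-tube}, and the case analysis transferred from the proof of \autoref{thm:change-of-variables-nondegeneracy}. One minor omission: your reduction for $|v_0|>2$ to the quantity in \eqref{eq:trafo-nondegeneracy-help-0} relies on $|u-\tilde v|\asymp|v_0|$, which only holds when $|v_0|\ge 10(R+|\bar v|+1)$; the intermediate range $2<|v_0|<10(R+|\bar v|+1)$ should be treated separately, exactly as in Step~3 of the Boltzmann proof (there $|v_0|$ is bounded, so $|v_0|^{-\gamma-2}|u-\tilde v|^{\gamma+2}$ is of order one for trivial reasons, $|\tau_0^{-1}(e)|\ge 1$, and the unweighted argument of \eqref{eq:eq-case-1_Br} applies with $e$ replaced by $\tau_0^{-1}(e)/|\tau_0^{-1}(e)|$).
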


\begin{proof}
First, we explain how to estimate $A$ (from \eqref{eq:a_Landau}) without applying the change of variables. We claim that 
there exists $\lambda > 0$ such that
\begin{align}
\label{eq:unif-ell-Landau-1}
e \cdot A(v) e  \ge \lambda (1 + |v|)^{\gamma}   ~~ \forall v \in \R^n, ~~ e \in \mathbb{S}^{n-1}. 
\end{align}

To see this, note that by \autoref{prop:subtract-tube} we have that 
\begin{align*}
\int_{B_R(\overline{v}) \setminus L_{\delta} } f(w) \d w \ge c
\end{align*}
for some $R > 0$, $\delta,c \ge 0$, depending only on $m_0,M_0,p_0,M_q,q$, where we denote by $L_{\delta}$ the tube of radius $\delta$ around $v + \R e$. Then, we have for any $v \in \R^n$ and $e \in \mathbb{S}^{n-1}$
\begin{align*}
e \cdot A(v) e  =  a_{n, \gamma } \int_{\R^n} G(w,e) |w|^{\gamma + 2} f(v-w) \d w = a_{n,\gamma} \int_{\R^n} G(w-v,e) |w-v|^{\gamma + 2} f(w) \d w,
\end{align*}
where 
\begin{align*}
G(w-v,e) :=  1 - \frac{[(w-v) \cdot e]^2}{|w-v|^2} =  1 - \cos^2(w-v,e) =  \sin^2(w-v,e).
\end{align*}
Hence, by following exactly the same arguments as in the proof of \autoref{thm:Boltzmann-kernel-nondegenerate}, we obtain that \eqref{eq:Gcomp}, \eqref{eq:G-estimate-case-1}, and \eqref{eq:eq-case-1_Br} also hold true in our setup, and thus we deduce \eqref{eq:nondegenerate-goal} with $s = 1$. Hence, the proof of \eqref{eq:unif-ell-Landau-1} is complete.

We are now in a position to prove the desired result. First, note that we are done  when $|v_0| \le 2$ by \eqref{eq:unif-ell-Landau-1}. When $|v_0| > 2$,   since 
\begin{align*}
\tau_0^{-1} \left\{ \left( I - \frac{w}{|w|} \otimes \frac{w}{|w|} \right) \tau_0^{-1}(e) \right\} \cdot e = \left( I - \frac{w}{|w|} \otimes \frac{w}{|w|} \right) \tau_0^{-1}(e) \cdot  \tau_0^{-1}(e), 
\end{align*}
we have that it holds
\begin{align*}
e \cdot \tilde{A}(v) e  = |v_0|^{-\gamma-2} \int_{\R^n} G(w-\tilde{v},\tau_0^{-1}(e)) |w-\tilde{v}|^{\gamma + 2} f(w) \d w.
\end{align*}
In case $2 \le |v_0| \le 10(R + |\overline{v}| + 1)$, we can apply the proof of \eqref{eq:unif-ell-Landau-1} with $v := \tilde{v}$ and $e := \tau_0^{-1}(e)$ and obtain
\begin{align*}
e \cdot \tilde{A}(v) e\ge \lambda (1 + |\tilde{v} - \overline{v}|)^{\gamma}  \ge c  ,
\end{align*}
where we used that $|\tilde{v}| + |\overline{v}| \le C$ when $|v_0| \le 10(R + |\overline{v}| + 1)$. \\
In case $|v_0| \ge 10(R + |\overline{v}| + 1)$, we recall also \eqref{eq:t0-norm-compute}, and deduce that
\begin{align*}
G(w-\tilde{v},\tau_0^{-1}(e)) &= |\tau_0^{-1}(e)|^2  - \frac{[(w - \tilde{v}) \cdot \tau_0^{-1}(e)]^2}{|w - \tilde{v}|^2} = |\tau_0^{-1}(e)|^2 \sin(\tilde{w} - \tilde{v} , \tau_0^{-1}(e))\\
& = [1 + (|v_0|^2 - 1)\cos^2(v_0,e)] \sin^2(w - \tilde{v} , \tau_0^{-1}(e)).
\end{align*}
Hence, in this case we can apply exactly the same arguments as in Step 3 of the proof of \autoref{thm:change-of-variables-nondegeneracy}. In particular, we can use \eqref{eq:trafo-nondegeneracy-help-0}, which immediately implies the desired result.
\end{proof}

For the remaining properties, we recall that the aforementioned change of variables has already been used in \cite{CSS18}, \cite{HeSn20} (in case $\gamma \le 0$). It was already shown in \cite[Lemma 2.4]{Sne20} that the the same computations carry over to hard potentials (in case $\gamma \in (0,1]$). For the convenience of the reader, we redo the computations in our setting in the following lemma.

\begin{lemma}
\label{lemma:Landau-upper-ellipticity}
Let $q \ge 2$. Let $\gamma \ge 0$ and $\gamma + 2 \in [0,q]$. Assume that $f$ is nonnegative and satisfies \eqref{eq:mass}, and \eqref{eq:moment}  with $q \ge 2$. Then, $\tilde{A}, \tilde{b}, \tilde{c}$ with $v_0\in \R^n$ satisfy
\begin{align*}
\sup_{e \in \mathbb{S}^{n-1}} e \cdot \tilde{A}(v) e \le \Lambda  , \qquad
|\tilde{b}(v)| \le \Lambda, \qquad |\tilde{c}(v)| \le \Lambda (1 + |v_0|)^{-2},\qquad\text{for all}\quad v\in B_2,
\end{align*}
with $\Lambda > 0$ depending only on $n,M_0,M_q,$ and $q$.
\end{lemma}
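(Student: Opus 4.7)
The proof splits naturally into the regimes $|v_0|<2$ and $|v_0|\ge 2$, the latter containing the main obstacle. When $|v_0|<2$, the map $\mathcal T_0$ is a translation, so $\tilde A=A$, $\tilde b=b$, $\tilde c=c$ are evaluated at $\tilde v=v_0+v\in B_4$. Each coefficient is then bounded by an integral of the form $\int|w|^{\gamma+j}f(\tilde v-w)\d w$ with $j\in\{0,1,2\}$; the substitution $u=\tilde v-w$, combined with $|\tilde v-u|^{\gamma+j}\le C((1+|\tilde v|)^{\gamma+j}+|u|^{\gamma+j})$ and $\gamma+2\le q$, yields a universal bound on all three coefficients (for $\tilde c$ one notes additionally $(1+|v_0|)^{-2}\ge 1/9$ in this regime).

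Assume now $|v_0|\ge 2$, so that $|\tilde v|\asymp|v_0|$. The same integration yields
\[
|c(\tilde v)|\le C|v_0|^\gamma,\qquad |b(\tilde v)|\le C|v_0|^{\gamma+1},\qquad \|A(\tilde v)\|_{\mathrm{op}}\le \operatorname{tr} A(\tilde v)\le C|v_0|^{\gamma+2}.
\]
Since $\tau_0^{-1}$ is symmetric with operator norm $|v_0|$, the $\tilde c$ and $\tilde b$ bounds follow immediately from the definitions: $|\tilde c(v)|=|v_0|^{-\gamma-2}|c(\tilde v)|\le C|v_0|^{-2}$, and $|\tilde b(v)|\le |v_0|^{-\gamma-2}\|\tau_0^{-1}\|_{\mathrm{op}}|b(\tilde v)|\le C$.

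The main obstacle is the bound for $\tilde A$: the trivial estimate $e\cdot\tilde A(v)e\le |v_0|^{-\gamma-2}|\tau_0^{-1}e|^2\|A(\tilde v)\|_{\mathrm{op}}$ only yields $O(|v_0|^2)$, because $|\tau_0^{-1}e|$ can be of order $|v_0|$ when $e\approx\hat v_0:=v_0/|v_0|$. The key point is that in precisely this direction, $A(\tilde v)$ is degenerate by a factor $|v_0|^{-2}$, which exactly compensates. I plan to decompose $e=\eta\hat v_0+e_\perp$ with $|e_\perp|^2=1-\eta^2$, so that $\tilde e:=\tau_0^{-1}e=\eta|v_0|\hat v_0+e_\perp$, and expand
\[
|v_0|^{\gamma+2}\,e\cdot\tilde A(v)e=\eta^2|v_0|^2(\hat v_0\cdot A(\tilde v)\hat v_0)+2\eta|v_0|(\hat v_0\cdot A(\tilde v)e_\perp)+e_\perp\cdot A(\tilde v)e_\perp,
\]
and to show that each summand is $\le C|v_0|^{\gamma+2}$.

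The hard part is the key new estimate $\hat v_0\cdot A(\tilde v)\hat v_0\le C|v_0|^\gamma$, saving an exact factor $|v_0|^{-2}$ over the trivial trace bound. Substituting $u=\tilde v-w$ in the definition of $A$ gives
\[
\hat v_0\cdot A(\tilde v)\hat v_0=a_{n,\gamma}\int|\tilde v_\perp-u_\perp|^2\,|\tilde v-u|^\gamma f(u)\d u,
\]
where $\perp$ denotes projection orthogonal to $v_0$, and crucially $|\tilde v_\perp|\le|\tau_0(v)|\le 2$. Splitting at $|u|=|v_0|/2$: on $\{|u|\le|v_0|/2\}$ one has $|\tilde v-u|\asymp|v_0|$ and $|\tilde v_\perp-u_\perp|^2\le C(1+|u|^2)$, contributing $C|v_0|^\gamma(M_0+M_2)$; on $\{|u|>|v_0|/2\}$ the factor $|\tilde v-u|^\gamma\le C|u|^\gamma$ reduces everything to $C(M_\gamma+M_{\gamma+2})\le C$, using $\gamma+2\le q$. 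The cross term is then handled via Cauchy--Schwarz for the positive-semidefinite form $A(\tilde v)$, giving $|\hat v_0\cdot A(\tilde v)e_\perp|\le C|v_0|^{\gamma+1}$, while the last summand uses the trace bound $e_\perp\cdot A(\tilde v)e_\perp\le \operatorname{tr} A(\tilde v)\le C|v_0|^{\gamma+2}$. Collecting the three bounds yields $|v_0|^{\gamma+2}\,e\cdot\tilde A(v)e\le C(\eta^2+|\eta|+1)|v_0|^{\gamma+2}$, whence $e\cdot\tilde A(v)e\le C$, concluding the proof.
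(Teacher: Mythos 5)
Your proof is correct and follows essentially the same approach as the paper, which establishes the anisotropic bound $e\cdot A(v)e\lesssim (1+|v|)^{\gamma}$ for $e\parallel v$ (citing \cite[Lemma 2.1]{CSS18}) and then invokes \cite[Lemma 4.1]{CSS18} for the change-of-variables step, rather than writing out the argument. What you have done is unfold those citations: your decomposition $e=\eta\hat v_0+e_\perp$ and the resulting cancellation of the $|v_0|^2$ factor against the $|v_0|^{-2}$ anisotropic degeneracy of $A$ in the $\hat v_0$-direction is precisely the content of \cite[Lemma 4.1]{CSS18}, and your direct derivation of $\hat v_0\cdot A(\tilde v)\hat v_0\lesssim|v_0|^\gamma$ by splitting $\{|u|\lessgtr|v_0|/2\}$ matches the paper's short re-proof of the parallel case of \eqref{eq:a-upper} for $\gamma>0$. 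One small point of care: your key estimate is in the fixed direction $\hat v_0$, whereas CSS18's is in the direction $\hat{\tilde v}$; since these differ by an angle $O(|v_0|^{-1})$ and $A=O(|v_0|^{\gamma+2})$, the two statements agree up to constants, and your direct computation bypasses the need to compare the two directions. Your self-contained version is arguably cleaner than a chain of citations, at the cost of length.
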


\begin{proof}
Following the proof of \cite[Lemma 2.1]{CSS18} it becomes apparent that also for $\gamma \ge 0$ it holds
\begin{align}
\label{eq:a-upper}
e \cdot A(v) e \le C
\begin{cases}
(1 + |v|)^{\gamma + 2}, ~~ e \in \mathbb{S}^{n-1},\\
(1 + |v|)^{\gamma} , ~~~~~ v \parallel e\in \mathbb{S}^{n-1},
\end{cases}
\end{align}
where $C > 0$ depends on $n,M_0,M_q,q$. The modifications to the proof of  \cite[Lemma 2.1]{CSS18} are obvious in the first case. If $e \parallel v$, we compute
\begin{align*}
e \cdot A(v) e &=   \int_{\R^d} |w|^2 \sin^2(v,w) |v-w|^{\gamma} f(w) \d w \\
&\le c    \int_{\R^d} |w|^{\gamma + 2} f(w) \d w + c  |v|^{\gamma} \int_{\R^d} |w|^2 f(w) \d w \le c (1 + |v|)^{\gamma}  . 
\end{align*}
The first identity is proved in \cite[Lemma 2.1]{CSS18}.
From \eqref{eq:a-upper}, we deduce the desired estimate for $\tilde{A}$ by following the corresponding arguments in the proof of \cite[Lemma 4.1]{CSS18}.

To prove the estimates for $b$ and $c$, we observe that 
\begin{align*}
|b(v)| \le C (1 + |v|)^{1+\gamma}, \qquad |c(v)| \le C (1 + |v|)^{\gamma},
\end{align*}
where $C > 0$ depends on $n,M_0,M_q,q$.
The proof of the estimates for $b$ is the same as in \cite[Lemma 2.3]{CSS18} in case $\gamma \in [-1,0]$ and the proof for $c$ goes in the same way, replacing $1 + \gamma$ by $\gamma$. From here, the estimates for $\tilde{b}$ and $\tilde{c}$ follow from the fact that $|\tilde{v}| \le c (1 + |v_0|)$ and $\Vert \tau_0^{-1} \Vert \le (1 + |v_0|)$.
\end{proof}

Having at hand \autoref{lemma:Landau-ellipticity} and \autoref{lemma:Landau-upper-ellipticity}, we are now in a position to prove a global H\"older estimate for solutions to the Landau equation. This results and its proof are in analogy to \autoref{lemma:Holder-estimate}, using the $C^{\alpha}$ estimate from \cite{GIMV19}.

\begin{lemma}
\label{lemma:Holder-estimate-Landau}
Let $q > 2$. Let $\gamma \ge 0$ and $\gamma + 2 \in [0,q]$ and $T > 0$. Let $f$ be a weak solution to the Landau equation in $(0, T)\times \R^n\times \R^n$ satisfying \eqref{eq:mass}, \eqref{eq:temperatureB}, and \eqref{eq:moment} with $q > 2$. Then, there exists $\alpha_0 > 0$ depending only on $n,m_0,M_0,p_0$, and $ M_q$, such that for all $\alpha\in (0, \alpha_0)$ and $p\in (\alpha,+\infty)$ the following holds: 

If $f\in C^0_{\ell, p}((0, T)\times \R^n\times \R^n)$ then $f\in C^\alpha_{\ell, p-\alpha}((\tau, T)\times \R^n\times \R^n)$ for any $\tau\in (0, T)$, and the following estimate holds for all  $0 \le \tau_1 < \tau_2 < T$ with $|\tau_2-\tau_1| \le 1$,
\begin{align*}
\Vert f \Vert_{C_{\ell,p-\alpha}^{\alpha}((\tau_2,T)\times \R^n \times \R^n)} \le C (\tau_2 - \tau_1)^{-\frac{\alpha}{2}} \Vert f \Vert_{C_{\ell,p}^0((\tau_1,T) \times \R^n \times \R^n)},
\end{align*}
where $C > 0$ depends only on $n,p, m_0,M_0,p_0,M_q,$ and $q$.
\end{lemma}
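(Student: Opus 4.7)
The plan is to mimic the proof of \autoref{lemma:Holder-estimate}, replacing the nonlocal Hölder estimate of \autoref{prop:Holder-estimate-kinetic} by the local De Giorgi--Nash--Moser-type Hölder estimate for kinetic Fokker--Planck equations from \cite{GIMV19}. The Landau case is actually simpler than the Boltzmann case: since the operator is local in $v$, there is no need for the cut-off function $\phi$ and the long-range correction term $h_1$. Fix $z_0 = (t_0,x_0,v_0) \in (\tau_2,T)\times \R^n \times \R^n$ and set $r = (\tau_2-\tau_1)^{1/2}$, so that $\cE_r(z_0) \subset (\tau_1,T)\times \R^n \times \R^n$. Apply the change of variables $\cT_0$: then $\tilde f$ satisfies \eqref{eq:Landau-transformed} in $Q_r$, i.e.\ a kinetic equation in divergence form with an additional drift $\tilde b\cdot \nabla_v \tilde f$ and a zeroth-order term $\tilde c \tilde f$, which we treat as a bounded source.

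By \autoref{lemma:Landau-ellipticity} the matrix $\tilde A$ is uniformly elliptic from below on $B_2$ with a constant $\lambda>0$ depending only on the macroscopic quantities, and by \autoref{lemma:Landau-upper-ellipticity} the upper ellipticity of $\tilde A$, the boundedness of $\tilde b$, and the bound $|\tilde c|\le \Lambda(1+|v_0|)^{-2}$ hold. Hence, setting $\tilde s := \tilde c\,\tilde f$, we obtain
\begin{align*}
\|\tilde s\|_{L^\infty(Q_r)} \le \Lambda (1+|v_0|)^{-2} \|\tilde f\|_{L^\infty(Q_r)}\le C (1+|v_0|)^{-2-p}\|f\|_{C^0_{\ell,p}((\tau_1,T)\times\R^n\times\R^n)}.
\end{align*}
The Hölder regularity estimate from \cite{GIMV19} for linear kinetic Fokker--Planck equations with bounded measurable coefficients and bounded source then yields, for some $\alpha_0 \in (0,1)$ depending only on $n,\lambda,\Lambda$ (hence on the macroscopic bounds) and any $\alpha \in (0,\alpha_0)$,
\begin{align*}
[\tilde f]_{C^\alpha_\ell(Q_{r/2})} \le C r^{-\alpha} \left( \|\tilde f\|_{L^\infty(Q_r)} + r^2 \|\tilde s\|_{L^\infty(Q_r)} \right) \le C r^{-\alpha} (1+|v_0|)^{-p}\|f\|_{C^0_{\ell,p}((\tau_1,T)\times\R^n\times\R^n)},
\end{align*}
where we used $r\le 1$ and $(1+|v_0|)^{-2}\le 1$.

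It remains to translate this pointwise estimate back to $f$ and to upgrade it to a bound in $C^\alpha_{\ell,p-\alpha}$. For any $\tilde z \in \cE_{r/2}(z_0)$, using $\tilde f(0)=f(z_0)$ and the relation $d_\ell(0,z) \le (1+|v_0|)\, d_\ell(z_0,\tilde z)$ between the kinetic distances before and after the change of variables (see the proof of \autoref{lemma:Holder-estimate}), we get
\begin{align*}
|f(z_0) - f(\tilde z)| \le C r^{-\alpha}(1+|v_0|)^{-p+\alpha} \|f\|_{C^0_{\ell,p}((\tau_1,T)\times\R^n\times\R^n)}\, d_\ell(z_0,\tilde z)^\alpha.
\end{align*}
For points $\tilde z \notin \cE_{r/2}(z_0)$ one has $d_\ell(z_0,\tilde z) \ge c\, r(1+|v_0|)^{-1}$, and the same bound follows directly from the $L^\infty$-control $|f(z_0) - f(\tilde z)| \le 2(1+|v_0|)^{-p}\|f\|_{C^0_{\ell,p}}$. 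Taking the supremum over $z_0$ and $\tilde z$ and multiplying by $(1+|v_0|)^{p-\alpha}$ then gives the desired bound on $\|f\|_{C^\alpha_{\ell,p-\alpha}((\tau_2,T)\times\R^n\times\R^n)}$.

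The main (mild) obstacle is to verify that the formulation of the Hölder estimate from \cite{GIMV19} applies directly to equation \eqref{eq:Landau-transformed} with the zeroth-order term $\tilde c\tilde f$ incorporated as a bounded source, and that it yields the scaling $r^{-\alpha}$ displayed above; however, this is a standard consequence of the kinetic self-similar scaling and already exploited in \cite{HeSn20, Sne20} in closely related contexts, so only minor bookkeeping is needed.
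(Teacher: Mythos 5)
Your proposal is correct and follows essentially the same route as the paper: apply the change of variables $\mathcal{T}_0$, invoke the H\"older estimate from \cite{GIMV19} for the transformed equation using \autoref{lemma:Landau-ellipticity} and \autoref{lemma:Landau-upper-ellipticity} to control the coefficients (with $\tilde{c}\tilde{f}$ treated as a bounded source), and then undo the change of variables exactly as in the proof of \autoref{lemma:Holder-estimate}. The only remark worth making is that your observation that the Landau case is \emph{simpler} than the Boltzmann case (no cutoff $\phi$ and no long-range correction $h_1$, since the operator is local in $v$) is correct and is implicitly used but not spelled out in the paper's terser proof.
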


\begin{proof}
The proof goes in the same way as the proof of \autoref{lemma:Holder-estimate}, applying the H\"older regularity estimate from \cite{GIMV19} to $\tilde{f}$ for any $z_0$. This is possible since $\tilde{f}$ is a solution to \eqref{eq:Landau-transformed} in $Q_r$, where $r := (\tau_2 - \tau_1)^{\frac{1}{2}}$, and because of \autoref{lemma:Landau-ellipticity} and \autoref{lemma:Landau-upper-ellipticity}.
We obtain for any $z_1,z_2 \in Q_{r/2}$:
\begin{align*}
|\tilde{f}(z_1) - \tilde{f}(z_2)| &\le C r^{-\alpha} (\Vert \tilde{f} \Vert_{L^{\infty}(Q_r} + r^2 \Vert \tilde{c} \tilde{f} \Vert_{L^{\infty}(Q_r)}) d_{\ell}(z_1,z_2)^{\alpha}.
\end{align*}
Undoing the change of variables, choosing $z_1 = 0$ implies that for any $\tilde{z}_2 \in Q_{r/2}(z_0)$:
\begin{align*}
|f(z_0) - f(\tilde{z}_2)| = |\tilde{f}(0) - \tilde{f}(z_2)| \le C r^{-\alpha} (1 + |v_0|)^{-p} \Vert f \Vert_{C^0_{\ell,p}((\tau_1,T) \times \R^n \times \R^n)} d_{\ell}(z_0,\tilde{z}_2)^{\alpha},
\end{align*}
where we also used that by \autoref{lemma:Landau-upper-ellipticity}
\begin{align*}
\Vert \tilde{c} \tilde{f} \Vert_{L^{\infty}(Q_r)} \le C  (1 + |v_0|)^{-2} \Vert f \Vert_{L^{\infty}(\mathcal{E}_r(z_0))} \le C (1 + |v_0|)^{-p-2} \Vert f \Vert_{C^0_{\ell,p}((\tau_1,T) \times \R^n \times \R^n)}.
\end{align*}
This concludes the proof by the same considerations as in \autoref{lemma:Holder-estimate}.
\end{proof}

We can finally conclude the proof of \autoref{cor:Landau}:

\begin{proof}[Proof of \autoref{cor:Landau}]

Thanks to  \autoref{lemma:Holder-estimate-Landau}, we can proceed in the exact same way as for the Boltzmann equation (setting $s = 1$ everywhere) and deduce an analog of \autoref{thm:Holder-estimate}. In particular, the first part of the claim holds. Moreover, when $f$ satisfies \eqref{eq:moment} for all $q > n$, we have that for any $\tau > 0$ and $p > 0$  
\begin{align}
\label{eq:Landau-Linfty}
\big\|(1+|v|)^p f\big\|_{L^{\infty}([\tau,T] \times \R^n \times \R^n)} \leq C_p 
\end{align}
with $C_p$ depending only on $n,m_0,M_0,p_0,p,\tau$, and on all $M_q$ for $q > n$.

To establish a higher order version of this estimate, we can proceed in the exact same way as in \cite{HeSn20}. Indeed, \cite[Proposition 3.2, Lemma 3.3]{HeSn20} remain true for $\gamma > 0$ without any change. These ingredients, together with, \autoref{lemma:Landau-ellipticity}, and \autoref{lemma:Landau-upper-ellipticity} allow us to apply the Schauder estimates from \cite{HeSn20} in an iterative way, in analogy to the proof of \cite[Theorem 1.2]{HeSn20}. Instead of the Gaussian decay in \cite[(3.6)]{HeSn20}, it suffices to use \eqref{eq:Landau-Linfty} for suitably large $p$.
\end{proof}

\section*{Conflict of interest statement}

The authors have no competing interests to declare that are relevant to the content of this article.

\section*{Data availability statement}

Data sharing is not applicable to this article as no datasets were generated or analysed.

\end{document}